\declaretheorem[name=Theorem,numberwithin=section]{thm}
\newtheorem{theorem}[thm]{Theorem}
\newtheorem{lemma}[thm]{Lemma}
\newtheorem{corollary}[thm]{Corollary}
\newtheorem{prop}[thm]{Proposition}
\newtheorem*{theorem*}{Theorem}
\theoremstyle{definition} 
\newtheorem{definition}[thm]{Definition}
\newtheorem{question}[thm]{Question}
\theoremstyle{remark}  
\newtheorem{remark}[thm]{Remark}
\newtheorem{example}[thm]{Example}
\newtheorem{conj}[thm]{Conjecture}
\newtheorem{conjecture}[thm]{Conjecture}
\newtheorem{claim}{Claim}
\newcommand{\define}[4]{\expandafter#1\csname#3#4\endcsname{#2{#4}}}
\newcommand{\cO}{\mathcal{O}}
\newcommand{\iso}{\cong}
\DeclareMathOperator*{\tensor}{\otimes}
\renewcommand{\Mat}{\mathrm{M}}
\newcommand{\initial}[1]{\mathrm{in}(#1)}
\DeclareMathOperator{\ch}{char}
\DeclarePairedDelimiter\floor{\lfloor}{\rfloor}
\DeclareMathOperator{\height}{ht}
  \NewDocumentCommand\set{mg}{%
    \ensuremath{\left\{ #1 \IfNoValueTF{#2}{}{\:\middle|\: #2} \right\}}}%
  \RenewDocumentCommand\set{mg}{%
    \ensuremath{\left\{ #1 \IfNoValueTF{#2}{}{\:\middle|\: #2} \right\}}}%
\newcommand{\maps}[3][n]{%
  \ifthenelse{\equal{#1}{l}}{\,{:}\,#2\!&\to\!#3}{}%
  \ifthenelse{\equal{#1}{r}}{\,{:}\,#2\!\to&\!#3}{}%
  \ifthenelse{\equal{#1}{n}}{\,{:}\,#2\!\to\!#3}{}%
}
\newcommand{\lmaps}[3][n]{%
  \ifthenelse{\equal{#1}{l}}{\,{:}\,#2\!&\longrightarrow\!#3}{}%
  \ifthenelse{\equal{#1}{r}}{\,{:}\,#2\!\longrightarrow&\!#3}{}%
  \ifthenelse{\equal{#1}{n}}{\,{:}\,#2\!\longrightarrow\!#3}{}%
}
\newcommand{\lmapsto}[3][n]{%
  \ifthenelse{\equal{#1}{l}}{#2\!&\longmapsto\!#3}{}%
  \ifthenelse{\equal{#1}{r}}{#2\!\longmapsto&\!#3}{}%
  \ifthenelse{\equal{#1}{n}}{#2\!\longmapsto\!#3}{}%
}
\newcommand{\widebar}{\overline}
\title{On Trace Zero Matrices and Commutators}
\author{Makoto Suwama}
\address{Department of Mathematics\\ University of Georgia\\
 Athens, Georgia 30602, USA}
\email{makoto.suwama@gmail.com}
\begin{document}

\begin{abstract}
  Given any commutative ring $R$, a commutator of two $n\times n$ matrices over $R$ has trace $0$. In this paper, we study the converse: whether every $n \times n$ trace $0$ matrix is a commutator. We show that if $R$ is a B\'{e}zout domain with algebraically closed quotient field, then every $n\times n$ trace $0$ matrix is a commutator. We also show that if $R$ is a regular ring with large enough Krull dimension relative to $n$, then there exist a $n\times n$ trace $0$ matrix that is not a commutator. This improves on a result of Lissner by increasing the size of the matrix allowed for a fixed $R$. We also give an example of a Noetherian dimension $1$ commutative domain $R$ that admits a $n\times n$ trace $0$ non-commutator for any $n\ge 2$.
\end{abstract}
\maketitle

\section{Introduction}
Let $R$ be a commutative ring. Given two $n\times n$ matrices $A$ and $B$ in $\Mat_n(R)$, recall that the commutator of $A$ and $B$ is denoted by $[A,B]:=AB-BA$. It is a standard fact that $\tr(AB) = \tr(BA)$, and since the trace is additive, it follows that $\tr([A,B]) = \tr(AB-BA)=0$. So it is natural to wonder if the converse is also true. That is, given an $n\times n$ trace $0$ matrix $C$, do there exist two $n \times n$ matrices $A$ and $B$ such that $C=[A,B]$?

The answer to the above question depends on the underlying commutative ring $R$ where the entries lie and the size $n$ of the matrix. If $R$ is a field, then every $n\times n$ trace $0$ matrix is a commutator. This was proven by Shoda in \cite{Shoda37} for a characteristic $0$ field and by Albert and Muckenhoupt in \cite{AM57} for a field with any characteristic.

More recently, Laffey and Reams showed that for every $n\ge 1$, any $n\times n$ trace $0$ matrix is a commutator over $R=\Z$ in \cite{LR94}, and Stasinski generalised it to an arbitrary principal ideal ring in \cite{Stasinski16}. Stasinski subsequently showed in \cite{Stasinski18} that over principal ideal rings, every trace $0$ matrix is a commutator of trace $0$ matrices as well. In this paper, we give a new class of rings where every trace $0$ matrix is a commutator.
\begin{restatable*}{thm}{bezout}
\label{thm:bezoutcommutator}
  Let $R$ be a B\'{e}zout domain with algebraically closed quotient field. Then every trace $0$ matrix in $\Mat_n(R)$ is a commutator for any $n\ge 1$.   
\end{restatable*}
In \Cref{sec:hollow}, we also discuss hollow matrices and nilpotent matrices which are both special cases of trace $0$ matrices, and prove the following theorem.
\begin{restatable*}{thm}{dedekind}
\label{thm:dedekindalgebra}
  Let $R$ be a Pr\"{u}fer domain that is also a $k$-algebra over an infinite field $k$. Then for any $n\ge 1$, every nilpotent matrix in $\Mat_n(R)$ is a commutator.  
\end{restatable*}
This is a partial progress towards answering whether every $n\times n$ trace $0$ matrix over a Dedekind domain is a commutator for any $n \ge 1$. No results for Dedekind domains were previously known for the case when $n\ge 3$ and $R$ is not a principal ideal domain.

On the other hand, there are rings where there is a trace $0$ matrix that is not a commutator. Lissner in \cite{Lissner61} showed that if $R$ is a polynomial ring in $m\ge 3$ variables, then there is an $n\times n$ trace $0$ non-commutator for any $2\le n\le \tfrac{m+1}{2}$. Mesyan in \cite{Mesyan06} used a similar idea to create a $2\times 2$ trace $0$ non-commutator over a ring with a maximal ideal satisfying certain properties. In \Cref{sec:two} and \Cref{sec:combinatorics}, we extend the above works of Lissner and Mesyan to construct a trace $0$ non-commutator over a more general class of rings.

\begin{restatable*}{thm}{main}
  \label{thm:main}
  Let $R$ be a commutative ring with a maximal ideal $\fm\subset R$ such that $R_\fm$ is a regular local ring of dimension $m\ge 3$. Suppose further that $n\ge 2$ is an integer such that $n \le \tfrac{m^2+2m+5}{8}$.
  Then there exists a trace $0$ non-commutator in $M_n(R)$.
\end{restatable*}
In particular, we improve the bound on the size of the matrix where we can produce a trace $0$ non-commutator for a fixed ring; If $m$ is the dimension of the ring and $n$ is the size of the matrix, then Lissner requires $n\le \tfrac{m+1}{2}$, so we have improved the bound on the size of the matrix from linear to quadratic in $m$. This improvement comes from solving a certain combinatorial problem which can be thought of either as a packing problem or a graph theory problem. The construction of the matrix will be described in \Cref{sec:two}, while the combinatorics will be explained in \Cref{sec:combinatorics}.

In \Cref{sec:ring}, we give an example of a Noetherian dimension $1$ ring where there is an $n\times n$ trace $0$ non-commutator for any $n\ge 2$.
\begin{restatable*}{thm}{unbounded}
  \label{thm:unbounded}
  There exists a Noetherian commutative domain $\Lambda$ with dimension $1$ such that for every $n\ge 2$, there exists a trace $0$ non-commutator in $\Mat_n(\Lambda)$.
\end{restatable*}

Finally, in \Cref{sec:2x2}, we discuss $2\times 2$ matrices. The $2\times 2$ matrices are special since every $2\times 2$ trace $0$ matrix being a commutator is equivalent to every vector in $R^3$ being a cross product. This will be discussed further in \Cref{sec:2x2} along with \Cref{thm:characterisation} which characterises rings where every $2\times 2$ trace $0$ matrix is a commutator. After the characterisation, we prove the following theorem.
\begin{restatable*}{thm}{fpalgebra}
  \label{cor:fpalgebraisop}
  Let $R$ be a regular finitely generated $\widebar{\F}_p$-algebra of dimension $2$. Then $R$ is an OP-ring and every trace $0$ matrix in $\Mat_2(R)$ is a commutator.
\end{restatable*}
The analogue of the above theorem for $\widebar{\Q}$-algebras is also true if the Bloch-Beilinson conjecture holds (see \Cref{thm:bbopring}).

\section{B\'{e}zout Domains and Pr\"{u}fer Domains}\label{sec:hollow}
Recall that a \emph{B\'{e}zout domain} is a domain where every finitely generated ideal is principal and a \emph{Pr\"{u}fer domain} is a domain where every finitely generated ideal is invertible. They are non-Noetherian analogues of a PID and a Dedekind domain respectively. A Noetherian B\'{e}zout domain is a PID and a Noetherian Pr\"{u}fer domain is a Dedekind domain. Note that a B\'{e}zout domain is a Pr\"{u}fer domain since principal ideals are always invertible. For a reference on B\'{e}zout domains and Pr\"{u}fer domains see \cite[Chapter III]{fl01}. 

The main idea in this section is about finding an appropriate basis of $R^n$ which puts a given matrix $A$ in a form that makes it easier to show that $A$ is a commutator. We consider $A$ as a matrix over the quotient field $K$ of $R$ to find an appropriate filtration of $K^n$ and then bring down the filtration to $R^n$ to find the new basis of $R^n$. We first prove some lemmas related to bringing down a $K$-vector space filtration to an $R$-module filtration.

Recall that given an $R$-module $M$, there is a natural map $M\to M\tensor_R K$ sending $m\in M$ to $m\tensor 1\in M\tensor_R K$. If $M$ is torsion-free, then the map is an injection, so we may consider $M\subset M\tensor_R K$ in such a case.
\begin{lemma}\label{lem:torsionfree}
  Let $R$ be a domain, $K$ be the quotient field of $R$ and $M$ be a torsion-free $R$-module. If $U\subset V\subset M\tensor_R K$ are $K$-subspaces, then $(V\cap M)/(U\cap M)$ is a torsion-free $R$-module.
\end{lemma}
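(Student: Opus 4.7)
The plan is to unwind the definitions and exploit the fact that $U$ is a $K$-subspace, hence closed under multiplication by arbitrary nonzero elements of $K$, including inverses of nonzero elements of $R$.

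First, I would fix the ambient setting: since $M$ is torsion-free, the canonical map $M \to M\tensor_R K$ is injective, so I can regard $V \cap M$ and $U \cap M$ as honest submodules of $M$, and also as subsets of $M\tensor_R K$.

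Next, I would test the definition of torsion-freeness directly. Suppose $\bar m \in (V \cap M)/(U \cap M)$ is a torsion element, so there exist $m \in V \cap M$ and $r \in R \setminus \{0\}$ with $rm \in U \cap M$. The goal reduces to showing $m \in U \cap M$, which since $m \in V \cap M \subset M$ amounts to showing $m \in U$. But $U$ is a $K$-subspace of $M\tensor_R K$, so it is stable under scaling by $r^{-1} \in K$. From $rm \in U$ one concludes
\[
  m = r^{-1}(rm) \in U,
\]
and therefore $m \in U \cap M$, i.e.\ $\bar m = 0$ in the quotient.

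There is essentially no obstacle here: the argument is a one-line consequence of the fact that $U$ is a $K$-vector space sitting inside $M\tensor_R K$, together with the injectivity of $M \hookrightarrow M\tensor_R K$ provided by torsion-freeness. The only thing to be careful about is that $V \cap M$ and $U \cap M$ make sense as subsets of the same module $M$, which is precisely what torsion-freeness of $M$ buys us.
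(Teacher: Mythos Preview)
Your proof is correct and takes essentially the same approach as the paper: pick a representative $m\in V\cap M$ with $rm\in U\cap M$ for some nonzero $r\in R$, and use that $U$ is a $K$-subspace to conclude $m\in U$. The paper states this in one line; your version just makes the use of $r^{-1}\in K$ explicit.
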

\begin{proof}
  Let $v\in V\cap M$ and $r\in R$ non-zero such that $rv\in U\cap M$. Then $v\in U$ since $r\ne 0$, and so $v\in U\cap M$. Hence $(V\cap M)/(U\cap M)$ is torsion-free.
\end{proof}

\begin{lemma}\label{lem:finitemodules}
  Let $R$ be a Pr\"{u}fer domain and $K$ be its quotient field. Suppose that $M$ is a finitely generated torsion-free $R$-module and $U\subset V:=M\tensor_R K$ is a $K$-subspace. Then $U\cap M\subset V$ is a finitely generated $R$-module.
\end{lemma}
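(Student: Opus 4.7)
My plan is to realize $U \cap M$ as a direct summand of $M$, which will immediately force it to be finitely generated.

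First I would choose a $K$-linear complement $U'$ of $U$ inside $V = M \otimes_R K$, so that $V = U \oplus U'$ as $K$-vector spaces, and consider the associated $K$-linear projection $\pi \colon V \to U'$. Restricting $\pi$ to the $R$-submodule $M \subset V$ gives an $R$-linear map $\pi|_M \colon M \to U'$ whose kernel is exactly $M \cap U$ (since $\ker \pi = U$). This yields a short exact sequence of $R$-modules
$$0 \to M \cap U \to M \to \pi(M) \to 0.$$

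Next I would argue that $\pi(M)$ is projective. It is finitely generated as a quotient of the finitely generated module $M$, and it is torsion-free as an $R$-submodule of the $K$-vector space $U'$. Since $R$ is a Pr\"ufer domain, the standard fact that every finitely generated torsion-free module over a Pr\"ufer domain is projective applies, so $\pi(M)$ is projective. Hence the above sequence splits, $M \cap U$ becomes a direct summand of $M$, and in particular $M \cap U$ is finitely generated.

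The argument is quite short, and the only substantive input beyond linear algebra is the defining semihereditary property of Pr\"ufer domains used above. There isn't a real obstacle — the key observation is simply that an intersection with a $K$-subspace can be rephrased as the kernel of the $R$-linear map obtained by projecting onto any $K$-complement, which is precisely the setup in which projectivity of the image yields a splitting.
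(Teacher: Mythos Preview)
Your proof is correct and genuinely shorter than the paper's. The paper argues by induction on $\dim_K V$: it picks a nonzero $v\in U\cap M$, passes to the quotient $V/Kv$, uses the inductive hypothesis to handle $q(U\cap M)$, and then separately shows that $Kv\cap M$ is finitely generated by invoking the structure theorem $M\iso I_1\oplus\cdots\oplus I_r$ and computing $Kv\cap M$ as a finite intersection of fractional ideal quotients. Your argument bypasses all of this by noting that $U\cap M=\ker(\pi|_M)$ for the projection onto a $K$-complement, and that the image $\pi(M)$ is finitely generated torsion-free, hence projective over a Pr\"ufer domain, so the sequence splits and $U\cap M$ is a direct summand of $M$.

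The two proofs ultimately rest on the same structural fact about Pr\"ufer domains (the paper's reference \cite[Theorem V.2.7]{fl01} gives both the decomposition into ideals and the projectivity), but you apply it once to the quotient rather than unwinding an induction through rank-one pieces. Your route is more conceptual and avoids the explicit ideal-quotient computation; the paper's route is a bit more hands-on but gains nothing extra here. In fact the paper itself uses exactly your splitting argument later, in the proof of \Cref{lem:hollowdecomp}, so your approach is entirely in the spirit of the surrounding material.
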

\begin{proof}
  We will prove that $U\cap M$ is finitely generated by inducting on $n:=\dim_KV$. If $n=1$, then $U=\set{0}$ or $K$, so $U\cap M=\set{0}$ or $M$, and hence $U\cap M$ is finitely generated. 

Suppose it is true for $n-1$. If $U\cap M=\set{0}$, then we are done, so suppose $U\cap M\ne \set{0}$. Then there exists a non-zero $v\in U\cap M$. Consider the quotient map \[q:V\to V/Kv.\] 
Since $M$ is finitely generated, $q(M)\iso M/(Kv\cap M)$ is also finitely generated, and by  \Cref{lem:torsionfree}, it is torsion-free as well. Moreover, $M\tensor_RK=V$, so $q(M)\tensor_RK = q(V)\iso K^{n-1}$. We claim that 
\[q(U)\cap q(M)=q(U\cap M).\]
Since $q(U)\cap q(M)\supset q(U\cap M)$ is clear, we only show that $q(U)\cap q(M)\subset q(U\cap M)$. So let $x\in q(U)\cap q(M)$. Then there exist $u\in U$ and $m\in M$ such that $x=q(u)=q(m)$. Now $m-u\in \ker q=Kv \subset U$, so $m= u + (m-u) \in U$ as well, hence $x=q(m)\in q(U\cap M)$. So we have a finitely generated torsion-free module $q(M)$ with $\dim_Kq(V)=n-1$, and so by the inductive hypothesis, $q(M)\cap q(U)=q(U\cap M)$ is a finitely generated $R$-module.
Hence to show that $U\cap M$ is finitely generated, we only need to show that $\ker q\cap (U\cap M) = Kv\cap M$ is finitely generated. Since $R$ is a Pr\"{u}fer domain, by \cite[Theorem V.2.7]{fl01}, there exist finitely generated ideals $I_1,\dots,I_r\subset R$ such that 
\[M\iso I_1\oplus \dots\oplus I_r.\]
Suppose $(v_1,\dots,v_r)\in \oplus_{i=1}^r I_i$ is the image of $v\in M$ under the isomorphism. Then
\[ Kv\cap M \iso \set{a\in K}{av\in M}= \set{a\in K}{av_i\in I_i\; \forall i=1,\dots r} =   \bigcap_{i=1}^r (I_i: v_i), \]
where $(I_i:v_i):=\set{a\in K}{av_i\in I_i}$ is the ideal quotient. If $v_i\ne 0$, then $(I_i:v_i)=v_i^{-1}I_i$, so $(I_i:v_i)$ is finitely generated since $I_i$ is finitely generated. If $v_i=0$, then $(I_i:v_i)=K$. We assumed $v\ne 0$, so at least one $v_i\ne 0$, and so the intersection
\[\bigcap_{i=1}^r (I_i: v_i) = \bigcap^r_{\substack{i=1 \\ v_i\ne 0}} (I_i: v_i) \]
is non-trivial. Hence $Kv\cap M$ is isomorphic to a finite intersection of finitely generated fractional ideals of a Pr\"{u}fer domain, so it is finitely generated by \cite[Ex. III.1.1]{fl01} and so we are done.
\end{proof}

\begin{theorem}\label{thm:bezouttriangularisation}
  Let $R$ be a B\'{e}zout domain and $K$ be its quotient field. If the characteristic polynomial of a matrix $A\in\Mat_n(R)$ splits completely over $K$, then there exists a basis of $R^n$ such that $A$ is upper triangular with respect to this new basis.
\end{theorem}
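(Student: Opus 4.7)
The plan is to construct an $A$-invariant complete flag of free submodules of $R^n$ by descending a complete $A$-invariant flag from $K^n$. Since the characteristic polynomial of $A$ splits over $K$, standard linear algebra over the field $K$ yields a flag
\[
0 = V_0 \subset V_1 \subset \cdots \subset V_n = K^n, \qquad \dim_K V_i = i,
\]
with each $V_i$ invariant under $A$. The goal is to show that the intersected filtration $M_i := V_i \cap R^n$ consists of free $R$-modules of the correct rank, so that one can choose a basis of $R^n$ adapted to it.

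Each $M_i$ is a finitely generated $R$-module by \Cref{lem:finitemodules}, applied to the torsion-free module $M = R^n$ and the subspace $U = V_i$ (Bézout domains are Prüfer). Each quotient $M_i/M_{i-1}$ is torsion-free by \Cref{lem:torsionfree}. A brief denominator-clearing argument shows $M_i \otimes_R K = V_i$, so tensoring the exact sequence $0 \to M_{i-1} \to M_i \to M_i/M_{i-1} \to 0$ with $K$ yields
\[
(M_i/M_{i-1}) \otimes_R K \;\cong\; V_i/V_{i-1},
\]
which is one-dimensional over $K$. Since $M_i/M_{i-1}$ is finitely generated and torsion-free, it embeds into this one-dimensional $K$-space and is therefore isomorphic to a finitely generated fractional ideal of $R$.

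This is where the Bézout hypothesis is essential: every finitely generated fractional ideal of a Bézout domain is principal, hence free of rank one as an $R$-module. Thus $M_i/M_{i-1} \cong R$ for every $i$. Inducting on $i$, the sequence $0 \to M_{i-1} \to M_i \to M_i/M_{i-1} \to 0$ splits because its right term is free (hence projective), so each $M_i$ is free of rank $i$ and any basis of $M_{i-1}$ extends to a basis of $M_i$. In particular, $M_n = V_n \cap R^n = R^n$, and we obtain a basis $e_1, \ldots, e_n$ of $R^n$ such that $(e_1, \ldots, e_i)$ is a basis of $M_i$ for every $i$. Since $A$ preserves each $V_i$ and preserves $R^n$, it preserves each $M_i$, and therefore its matrix in this basis is upper triangular.

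The main obstacle is the freeness of the successive quotients $M_i/M_{i-1}$: the finite generation and torsion-freeness give only a finitely generated fractional ideal, and it is precisely the Bézout hypothesis (not merely Prüfer or Dedekind) that promotes such an ideal to a principal, hence free, module. The rest of the argument is a clean induction using splittings.
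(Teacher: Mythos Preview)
Your proof is correct and follows essentially the same route as the paper: descend the $A$-invariant flag from $K^n$ to $R^n$ via intersection, use \Cref{lem:finitemodules} and \Cref{lem:torsionfree} to see that the successive quotients are finitely generated and torsion-free, and then invoke the B\'{e}zout hypothesis to conclude they are free of rank one so that the filtration splits into a basis. The only cosmetic difference is that the paper cites the general fact that finitely generated torsion-free modules over a B\'{e}zout domain are free, whereas you argue directly that each quotient is a finitely generated fractional ideal and hence principal; your version is slightly more self-contained and also makes the rank-one claim explicit.
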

\begin{proof}
  If we consider $A\in\Mat_n(K)$, then since the characteristic polynomial splits completely, there exists a filtration of $K$-vector spaces
\[0=V_0\subset V_1\subset\cdots\subset V_n=K^n, \]
such that $\dim_KV_i=i$ and $AV_i\subset V_i$ for all $i=1,\dots, n$ (see e.g. Jordan canonical form in \cite[12.3]{DS04}). If we let $W_i:=V_i\cap R^n$ for $i=0,1,\dots, n$, then we obtain the following filtration of $R$-modules
\[ 0=W_0\subset W_1\subset\cdots \subset W_n=R^n,\]
such that $AW_i\subset W_i$ for all $i=1,\dots, n$. By \Cref{lem:finitemodules}, $W_i$'s are all finitely generated, hence $W_i/W_{i-1}$'s are also finitely generated. Now by \Cref{lem:torsionfree}, $W_i/W_{i-1}$ is also torsion-free, and so it is a finitely generated torsion-free module over a B\'{e}zout domain, hence $W_i/W_{i-1}$ is free by \cite[Corollary V.2.8]{fl01}. So the following exact sequence of $R$-modules splits
\[ 0\to W_{i-1} \to W_i\to W_i/W_{i-1} \to 0, \]
and there exists a free rank $1$ $R$-module $M_i$ such that $W_i=W_{i-1}\oplus M_i$. Hence we obtain a decomposition $R^n = \oplus_{i=1}^n M_i$  such that for all $i=1,\dots,n$, $AM_i\subset \oplus_{j\le i}M_j$.
Now $M_i$ is a free rank $1$ $R$-module, and so the above decomposition gives an $R$-basis of $R^n$ such that with respect to this new basis, $A$ is an upper triangular matrix.
\end{proof}

\begin{lemma}\label{lem:triangularcommutator}
  Let $R$ be a ring and $A\in\Mat_n(R)$ be an upper triangular trace $0$ matrix. Then $A$ is a commutator.
\end{lemma}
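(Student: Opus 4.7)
The plan is to exhibit explicit matrices $X, Y \in \Mat_n(R)$ for which $[X, Y] = A$. I would take $X$ to be the nilpotent shift matrix with $X_{i, i+1} = 1$ for $1 \le i \le n-1$ and $X_{ij} = 0$ otherwise. A short computation then shows that for any $Y = (y_{ij}) \in \Mat_n(R)$ one has $(XY - YX)_{ij} = y_{i+1, j} - y_{i, j-1}$, under the conventions $y_{n+1, j} := 0$ and $y_{i, 0} := 0$.

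Next I would build $Y$ row by row through a recurrence engineered so that $[X, Y]$ matches $A$ entry by entry. Setting $y_{1, j} := 0$ for all $j$, I would inductively define
\[
y_{i+1, j} := y_{i, j-1} + A_{i, j}
\]
for $i = 1, \ldots, n - 1$ and $j = 1, \ldots, n$. By construction, the identity $(XY - YX)_{ij} = A_{ij}$ holds for every $i \le n - 1$, so the only entries left to verify are those in the bottom row.

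For $i = n$ the commutator entry is $(XY - YX)_{n, j} = -y_{n, j-1}$, and unwinding the recurrence gives
\[
y_{n, j-1} = \sum_{k=1}^{n-1} A_{k,\, j - n + k}.
\]
Upper triangularity of $A$ forces each summand to vanish unless $k \le j - n + k$, i.e., $j \ge n$. Consequently $y_{n, j-1} = 0$ whenever $j < n$, which matches the vanishing of $A_{n, j}$; and when $j = n$ the expression reduces to $\sum_{k=1}^{n-1} A_{k, k}$, which the trace zero hypothesis identifies with $-A_{n, n}$. Thus $(XY - YX)_{n, n} = A_{n, n}$, completing the verification.

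The whole argument is a direct construction, and there is no real obstacle beyond bookkeeping: the content of the lemma is precisely that upper triangularity and trace zero conspire to make the bottom-row consistency check succeed. A minor stylistic choice is whether to present $Y$ via the closed form above or via the recursive definition; the recursive presentation is what makes the top $n-1$ rows automatic and isolates the trace zero condition to a single clean computation.
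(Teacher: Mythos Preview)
Your proof is correct and follows essentially the same approach as the paper: both take $X$ to be the superdiagonal shift matrix and define the second matrix via the recurrence $y_{i+1,j} = y_{i,j-1} + A_{ij}$ starting from a zero first row. Your treatment is in fact more careful than the paper's, since you explicitly verify the bottom-row identities $(XY-YX)_{n,j} = A_{n,j}$ --- the place where upper-triangularity and the trace-zero hypothesis are actually used --- whereas the paper's writeup tacitly applies the recurrence at $i=n$ without a separate check.
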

\begin{proof}
  Let $X=(x_{ij})\in\Mat_n(R)$ be the matrix with $1$'s on the superdiagonal and $0$ everywhere else, that is,
  \[X=
    \begin{pmatrix}
      0 & 1 & & 0 & 0\\
      0 & 0 & \ddots & 0 & 0\\
      \vdots&\vdots & \ddots  &\ddots & \\
      0 & 0 & \cdots& 0 & 1\\
      0 & 0 & \cdots& 0 & 0
    \end{pmatrix}
    .\]
If $A=(a_{ij})$, then we define $B=(b_{ij})\in\Mat_n(R)$ sequentially from the top to the bottom as follows:
  \[ b_{ij}=
    \begin{cases}
      0 & \text{if }i=1,\\
      a_{1,j}&\text{if }i=2,\\
      a_{i-1,j}+b_{i-1,j-1}&\text{if }3\le i\le n.
    \end{cases}
\]
To ease the notation, we also let $b_{ij}:=0$ if $i=n+1$ or $j=0$. Now for any $1\le i,j\le n$, we have
\[ [X,B]_{ij} = b_{i+1,j}-b_{i,j-1}.\]
Now for $i=1$,
\[ [X,B]_{1,j} = b_{2,j}-b_{1,j-1} = a_{1,j},\]
and so $[X,B]_{1,j}=A_{1,j}$. For $i\ge 2$, we have
\[ [X,B]_{i,j} = b_{i+1,j}-b_{i,j-1} = a_{i,j}+b_{i,j-1}-b_{i,j-1}=a_{i,j},\]
so $[X,B]_{i,j}=A_{i,j}$. Hence $[X,B]=A$.
\end{proof}

We now combine \Cref{thm:bezouttriangularisation} and \Cref{lem:triangularcommutator} to show one of the main theorems of this section.
\bezout
\begin{proof}
  By  \Cref{thm:bezouttriangularisation} we can triangularise $A$. Note that for any $A\in\Mat_n(R)$ and $g\in \GL_n(R)$, $A$ is a commutator if and only if $gAg^{-1}$ is a commutator since for any $B,C\in\Mat_n(R)$,
\[ A=[B,C] \iff gAg^{-1} = [gBg^{-1},gCg^{-1}].\]  
Then \Cref{lem:triangularcommutator} implies that $A$ is a commutator. 
\end{proof}

\begin{remark}
  The only class of rings for which it was previously known that every trace $0$ matrix is a commutator is the class of principal ideal rings, and this is due to Stasinski in \cite[Theorem 6.3]{Stasinski16}. So \Cref{thm:bezoutcommutator} gives a new class of rings where every trace $0$ matrix is a commutator. Note that \Cref{thm:bezoutcommutator} does not imply Stasinski's result since we assume that the B\'{e}zout domain has algebraically closed quotient field. We also note that in our proof, we take a different approach to Stasinski's proof. 
\end{remark}

We now discuss examples of B\'{e}zout domains with algebraically closed quotient field.
\begin{theorem}[{\cite[Theorem 102]{Kaplansky74}}]\label{thm:kapbezout}
  Let $R$ be a Dedekind domain and $K$ be its quotient field. If for all finite extensions $L/K$, the integral closure $S$ of $R$ in $L$ has torsion Picard group $\Pic(S)$, then the integral closure $\widebar{R}$ of $R$ in an algebraic closure of $K$ is a B\'{e}zout domain.
\end{theorem}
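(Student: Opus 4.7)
The plan is to reduce the question about $\widebar{R}$ to a question about one of the finite integral extensions $S\subset\widebar{R}$, apply the torsion hypothesis on $\Pic(S)$ there, and then pull a principal generator back up to $\widebar{R}$ by adjoining a suitable radical.

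More precisely, given $a_1,\dots,a_n\in\widebar{R}$, I would set $L:=K(a_1,\dots,a_n)$, so that all the $a_i$ lie in the integral closure $S$ of $R$ in $L$. By Krull--Akizuki applied to the Dedekind domain $R$, $S$ is again a Dedekind domain, and in particular the ideal $I:=(a_1,\dots,a_n)S$ is invertible. The hypothesis says $[I]\in\Pic(S)$ is torsion, so there exist $m\ge 1$ and $c\in S$ with $I^m=cS$. I would then choose an $m$th root $\alpha$ of $c$ inside an algebraic closure of $K$; since $c$ is integral over $R$, so is $\alpha$, whence $\alpha\in\widebar{R}$. Set $L':=L(\alpha)$ and let $S'$ be the integral closure of $R$ in $L'$, which is again Dedekind by Krull--Akizuki.

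The core step is to upgrade the equation $I^m=cS$ to the equality $IS'=\alpha S'$. In $S'$ one has $(IS')^m=cS'=(\alpha S')^m$ as invertible fractional ideals, and since the group of fractional ideals of a Dedekind domain is the free abelian group on its nonzero primes, and in particular torsion-free, this forces $IS'=\alpha S'$. Extending scalars to $\widebar{R}$ then yields
\[
 (a_1,\dots,a_n)\widebar{R}=I\widebar{R}=(IS')\widebar{R}=(\alpha S')\widebar{R}=\alpha\widebar{R},
\]
so the finitely generated ideal $(a_1,\dots,a_n)\widebar{R}$ is principal, and $\widebar{R}$ is B\'{e}zout.

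The main obstacle in this sketch is really the torsion-freeness passage: one must verify that $S$ and $S'$ really are Dedekind (so that $\Pic$ is defined and fractional ideals are invertible) and that the single equation $I^m=cS$ in $S$ propagates, after the finite extension $L\subset L'$, to an equation of fractional ideals in $S'$ with no $m$-torsion ambiguity. Everything else -- finding a finite extension containing the $a_i$, taking the radical, and extending scalars back up to $\widebar{R}$ -- is formal once the Dedekind framework is in place.
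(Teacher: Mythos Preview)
The paper does not supply its own proof of this theorem: it is quoted verbatim as \cite[Theorem 102]{Kaplansky74} and used as a black box. So there is nothing in the paper to compare against beyond the citation.

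Your argument is correct and is essentially the classical proof (and, in outline, Kaplansky's). A couple of small points worth tightening in a final write-up: for step~5 you should note that $\alpha$ is integral over $S$ (via $X^m-c$) and hence over $R$ by transitivity, not directly over $R$; and in the torsion-freeness step you should separately dispose of the trivial case $a_1=\cdots=a_n=0$ so that $IS'$ and $\alpha S'$ are genuinely nonzero fractional ideals. With those cosmetic fixes the proof is complete.
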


\begin{example}
  Let $R:=\Z$ or $R:=k[t]$, with $k$ a finite field, and $K$ be the quotient field of $R$. Then for any finite extension $L/K$, the integral closure $S$ of $R$ in $L$ has finite Picard group. Hence by \Cref{thm:kapbezout}, the integral closure $\widebar{R}$ of $R$ in an algebraic closure of $K$ is a B\'{e}zout domain. Moreover, the quotient field of $\widebar{R}$ is algebraically closed, so by \Cref{thm:bezoutcommutator}, every trace $0$ matrix is a commutator over $\widebar{R}$.
\end{example}

\begin{corollary}\label{cor:dedekindextension}
Let $R$ be a Dedekind domain and $K$ be its quotient field. If for all finite extensions $L/K$, the integral closure $S$ of $R$ in $L$ has torsion Picard group $\Pic(S)$, then for all trace $0$ matrices $A\in\Mat_n(R)$, there exists an $R$-algebra $T$ such that $T$ is finitely generated as an $R$-module and $A$ is commutator in $\Mat_n(T)$.
\end{corollary}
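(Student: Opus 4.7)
The strategy is to combine the hypothesis with Kaplansky's theorem (\Cref{thm:kapbezout}) and then reduce to \Cref{thm:bezoutcommutator}. Concretely, the hypothesis on $\Pic(S)$ is precisely what \Cref{thm:kapbezout} needs in order to conclude that the integral closure $\widebar{R}$ of $R$ in a fixed algebraic closure $\widebar{K}$ of $K$ is a B\'ezout domain. Since the quotient field of $\widebar{R}$ is $\widebar{K}$, which is algebraically closed by construction, \Cref{thm:bezoutcommutator} applies to $\widebar{R}$, giving matrices $B,C\in\Mat_n(\widebar{R})$ with $A=[B,C]$.

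Now I would descend from $\widebar{R}$ to a finite $R$-subalgebra. Let $T\subset \widebar{R}$ be the $R$-subalgebra generated by the (finitely many) entries of $B$ and $C$. By construction $T$ is a finitely generated $R$-algebra, and each generator is an element of $\widebar{R}$, hence integral over $R$. It is a standard fact (see e.g.\ \cite[Corollary 5.2]{fl01} or any standard commutative algebra reference) that an $R$-algebra which is finitely generated as an $R$-algebra by integral elements is finitely generated as an $R$-module. Thus $T$ is a finite $R$-module. Since $B,C\in\Mat_n(T)$ and the identity $A=[B,C]$ persists in $\Mat_n(T)$, we conclude that $A$ is a commutator in $\Mat_n(T)$.

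There is no real obstacle here beyond citing the ingredients in the right order; the only thing to check is that the universally quantified statement in \Cref{thm:bezoutcommutator} can be applied after extending scalars, which is immediate because $\Mat_n(R)\subset\Mat_n(\widebar{R})$. The corollary can thus be phrased as saying that the obstruction to being a commutator over $R$ is always killed by passage to some finite integral extension, which is a natural companion to \Cref{thm:bezoutcommutator}.
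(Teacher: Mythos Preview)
Your proof is correct and follows essentially the same route as the paper: apply \Cref{thm:kapbezout} to get that $\widebar{R}$ is B\'ezout with algebraically closed quotient field, invoke \Cref{thm:bezoutcommutator} to write $A=[B,C]$ over $\widebar{R}$, and then descend to the $R$-subalgebra generated by the entries of $B$ and $C$. Your version is in fact slightly more explicit than the paper's, since you spell out why $T$ is module-finite over $R$ (via integrality of the generators), a point the paper leaves implicit.
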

\begin{proof}
  By \Cref{thm:kapbezout}, the integral closure $\widebar{R}$ of $R$ in an algebraic closure of $K$ is a B\'{e}zout domain. Hence by \Cref{thm:bezoutcommutator}, $A$ is a commutator over $\widebar{R}$, so $A=[B,C]$ for some $B,C\in \Mat_n(\widebar{R})$. If we take $T:=R[ b_{ij}, c_{ij} : 1\le i, j\le n]\subset\widebar{R}$, then $B,C\in \Mat_n(T)$, so $A$ is a commutator in $\Mat_n(T)$.
\end{proof}
\begin{remark}
  In \Cref{cor:dedekindextension}, we could also take the integral closure $T'$ of $R$ in the quotient field $F$ of $T$. Then $T'$ will be a Dedekind domain, but not necessary a finitely generated $R$-module, unless $F/K$ is separable or $R$ is finitely generated over a field.
\end{remark}

\begin{remark}
  A related notion to the assumption in \Cref{thm:kapbezout} is pictorsion rings defined in \cite[Definition 0.3]{GLL15}. A commutative ring $R$ is called \emph{pictorsion} if $\Pic(S)$ is torsion for any finite ring map $R\to S$. As noted in \cite{GLL15} before Lemma 8.10, a pictorsion Dedekind domain satisfies the assumption of \Cref{thm:kapbezout} (the assumption is called Condition (T)(a) in \cite[Definition 0.2]{GLL15}). While it is not known whether every Dedekind domain $R$ satisfying the assumption of \Cref{thm:kapbezout} is pictorsion, if we assume that the residue fields at all maximal ideals of $R$ are algebraic extensions of finite fields, then $R$ is pictorsion \cite[Lemma 8.10 (b)]{GLL15}.
\end{remark}

We now discuss the case of hollow endomorphisms of $R$-modules for an arbitrary ring $R$, and then specialise it to nilpotent matrices over Pr\"{u}fer domain in \Cref{thm:dedekindalgebra}.
Recall that an $n\times n$ matrix is \emph{hollow} if all of its diagonal entries are zero (see e.g. \cite[Section 3.1]{Gentle} for a reference on hollow matrices). In the following definition, we generalise the concept of a hollow matrix to an endomorphism of an arbitrary $R$-module with a decomposition.
\begin{definition}
  Let $R$ be a ring and $M$ be an $R$-module. Suppose we have a decomposition $M=\oplus_{k=1}^nM_k$ where $M_k\subset M$ are $R$-submodules. Then for any $A\in\End_R(M)$ and $1\le i,j\le n$, define $a_{ij}:= \pi_iA\iota_j \in \Hom_R(M_j,M_i)$, where $\pi_i\in\Hom_R(\oplus_{k=1}^nM_k,M_i)$ is the projection and $\iota_j\in\Hom_R( M_j,\oplus_{k=1}^nM_k)$ is the inclusion. We can think of $(a_{ij})$ as an $n\times n$ matrix where the $(i,j)$-th entry $a_{ij}$ lies in $\Hom_R(M_j,M_i)$ instead of $R$. We say that $A\in\End_R(M)$ is \emph{hollow with respect to a decomposition} $M = \oplus_{k=1}^nM_k$ if $a_{kk}=0$ for all $k=1,\dots,n$.
\end{definition}

Note that a matrix $A\in\Mat_n(R)$ being hollow is the same as $A\in\End_R(R^n)$ being a hollow endomorphism with respect to the decomposition coming from the standard basis.

Let $R$ be a commutative ring with $C:=\set{r_1,\dots,r_n}\subset R^\times$ such that $r_i-r_j\in R^\times$ for all $i\ne j$. Such a set $C$ is called a \emph{clique of exceptional units}. This term is usually used in the context of number rings, and the cliques were used by Lenstra to construct Euclidean fields in \cite{lenstra76}.

\begin{example}\label{ex:algebraclique}
  Let $k$ be a field and $R$ be a $k$-algebra. Then any $C\subset k^\times \subset R^\times$ is a clique of exceptional units since $x-y\in k^\times\subset R^\times$ for all distinct $x,y\in C$.
\end{example}

\begin{theorem}\label{thm:hollow}
  Let $R$ be a commutative ring with a clique of exceptional units $\set{r_1,\dots,r_n}$ of size $n\ge 1$.
  Let $M$ be a finitely generated $R$-module with a decomposition $M=\oplus_{k=0}^nM_k$, and suppose that $A\in\End_R(M)$ is hollow with respect to that decomposition. Then
  \[A = [X,A'] \]
  for some $X,A'\in\End_R(M)$. In particular, every hollow matrix in $\Mat_{n+1}(R)$.
\end{theorem}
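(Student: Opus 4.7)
The plan is to imitate the classical diagonalization argument: choose $X$ to act as a distinct scalar on each summand $M_k$, compute $[X,A']$ block by block, and solve for $A'$ blockwise using the fact that the pairwise differences of these scalars are units.

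First, I would augment the given clique by setting $r_0 := 0$. For each $i \in \{1,\dots,n\}$, the element $r_i - r_0 = r_i$ lies in $R^\times$ by hypothesis, and the differences $r_i - r_j$ with $1 \le i \ne j \le n$ are units by the clique property. Thus $\{r_0, r_1, \dots, r_n\}$ is a set of $n+1$ elements of $R$ whose pairwise differences are all units.

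Next, define $X \in \End_R(M)$ by declaring $X|_{M_k} = r_k \cdot \mathrm{id}_{M_k}$ for each $k = 0, 1, \dots, n$ and extending linearly via the given decomposition. With respect to this decomposition, the $(i,j)$-block of $X$ is $r_i \delta_{ij} \cdot \mathrm{id}_{M_i}$. For any $A' \in \End_R(M)$ with blocks $a'_{ij} = \pi_i A' \iota_j \in \Hom_R(M_j, M_i)$, a direct computation of the block decomposition of $XA' - A'X$ gives
\[
[X,A']_{ij} \;=\; r_i\, a'_{ij} - a'_{ij}\, r_j \;=\; (r_i - r_j)\, a'_{ij},
\]
where the last equality uses that scalars from $R$ commute with all homomorphisms.

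Now I would solve $[X,A'] = A$ blockwise. For $i \ne j$, the scalar $r_i - r_j$ is a unit, so I can define
\[
a'_{ij} \;:=\; (r_i - r_j)^{-1}\, a_{ij} \;\in\; \Hom_R(M_j, M_i).
\]
For $i = j$, both sides are zero (the left by the computation above, the right because $A$ is hollow with respect to the decomposition), so I can set $a'_{ii} := 0$. Assembling these blocks produces $A' \in \End_R(M)$ with $[X,A'] = A$. The matrix statement then follows by applying this to $M = R^{n+1}$ with the decomposition into coordinate lines, since a hollow matrix corresponds precisely to a hollow endomorphism in this case. There is no real obstacle here; the only subtle point is remembering to augment the clique with $0$ to obtain the needed $n+1$ scalars, since the given clique has size only $n$ while the decomposition has $n+1$ summands.
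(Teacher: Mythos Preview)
Your proposal is correct and follows essentially the same argument as the paper: set $r_0=0$, let $X$ act as the scalar $r_k$ on $M_k$, and define $a'_{ij}=(r_i-r_j)^{-1}a_{ij}$ for $i\ne j$ and $0$ on the diagonal. The paper's proof is identical in structure and detail.
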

\begin{proof}
  Let $X$ be the diagonal matrix with $r_0:=0,r_1,\dots,r_n$ on the diagonal. We slightly abuse notation here and we denote by $r_i$ the element in $R$ and the endomorphism given by multiplication by $r_i$ on $M_i$ so that we can view $X$ as an element of $\End_R(M)$. If $A=(a_{ij})$, then define $A':=(a'_{ij})\in \End(\oplus_{k=0}^nM_k)$ as follows:
\[a'_{ij} :=
  \begin{cases}
    0 &\text{if }i=j\\
    (r_i-r_j)^{-1}a_{ij} &\text{otherwise}
  \end{cases}.
\]
 Note that $r_1,\dots,r_n$ are units, so $r_0-r_i=-r_i$ is a unit for all $1\le i\le n$. Then
\[[X,A']_{ij} = x_{ii}a'_{ij}-a'_{ij}x_{jj} = r_{i}a'_{ij}-a'_{ij}r_j=(r_i-r_j)a'_{ij}=a_{ij},\]
so $A=[X,A']$. 

The last part follows since a hollow matrix $A\in\Mat_{n+1}(R)$ is hollow with respect to the decomposition of $R^{n+1}$ coming from the standard basis.
\end{proof}

\begin{corollary}
  Let $k$ be a field and $R$ be a $k$-algebra. Then every hollow matrix in $\Mat_n(R)$ is a commutator for any $1\le n\le \# k$. In particular, if $k$ is a field of infinite cardinality, then every $n\times n$ hollow matrix is a commutator for any $n \ge 1$.
\end{corollary}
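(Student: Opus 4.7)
The plan is to reduce the corollary to a direct application of \Cref{thm:hollow} combined with \Cref{ex:algebraclique}. Recall that \Cref{thm:hollow} shows that a clique of exceptional units of size $m$ suffices to write every hollow matrix in $\Mat_{m+1}(R)$ as a commutator. So to handle $\Mat_n(R)$, we need a clique of size $n-1$ inside $R^\times$.

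First I would dispose of the trivial case $n=1$: the only hollow $1\times 1$ matrix is the zero matrix, which equals $[0,0]$. For $n\ge 2$, I would invoke \Cref{ex:algebraclique}, which tells us that any finite subset of $k^\times$ automatically gives a clique of exceptional units in $R$, because differences of distinct elements of $k$ lie in $k^\times\subset R^\times$. So the problem reduces to finding $n-1$ distinct elements of $k^\times$. When $k$ is finite of cardinality $q$, we have $\#k^\times=q-1$, and the hypothesis $n\le \#k=q$ gives $n-1\le q-1$, so such a choice exists; when $k$ is infinite, there is certainly no obstruction. In both situations we pick distinct $r_1,\dots,r_{n-1}\in k^\times\subset R^\times$, apply \Cref{thm:hollow} with this clique, and conclude that every hollow matrix in $\Mat_n(R)$ is a commutator.

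The second sentence of the corollary (the infinite-cardinality case) then follows immediately by letting $n$ range over all positive integers, since the bound $n\le\#k$ is vacuous when $\#k$ is infinite.

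There is essentially no obstacle here: the whole content of the corollary is already packaged in \Cref{thm:hollow} and \Cref{ex:algebraclique}, and the only mild subtlety is the off-by-one bookkeeping between the size of the clique ($n-1$) and the size of the matrix ($n$), together with the degenerate case $n=1$ where the clique is empty.
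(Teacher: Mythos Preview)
Your proposal is correct and follows essentially the same approach as the paper: invoke \Cref{ex:algebraclique} to obtain a clique of exceptional units of size $n-1$ inside $k^\times$, then apply \Cref{thm:hollow}. Your explicit treatment of the degenerate case $n=1$ (where \Cref{thm:hollow} does not directly apply since it assumes a clique of size $\ge 1$) is a small point of added care that the paper's proof leaves implicit.
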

\begin{proof}
  From \Cref{ex:algebraclique}, we have a clique of exceptional units $C\subset k^\times\subset R^\times$ of any size up to $\# k^\times$. Since every hollow matrix in $\Mat_{\# C+1}(R)$ is a commutator by \Cref{thm:hollow}, every $n\times n$ hollow matrix is a commutator for any $1\le n\le \# k^\times+1=\# k$.  
\end{proof}

We now consider nilpotent matrices over a Pr\"{u}fer domain $R$. We generalise the procedure described by Appleby in \cite[Section 3]{Appleby98} from Dedekind domains to Pr\"{u}fer domains. This allows us to find a decomposition of a finitely generated torsion-free $R$-module $M$ that makes $A$ strictly upper triangular for any fixed nilpotent endomorphism $A\in\End_R(M)$.
\begin{lemma}\label{lem:hollowdecomp}
  Let $R$ be a Pr\"{u}fer domain, $M$ be a finitely generated torsion-free $R$-module and $A\in\End_R(M)$ be a nilpotent endomorphism. Then there exists a decomposition $M=\oplus_{i=1}^nM_i$ where $n=\rk M$, such that the endomorphism $A$ is strictly upper triangular with respect to this decomposition.
\end{lemma}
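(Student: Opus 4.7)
The plan is to mimic the proof of \Cref{thm:bezouttriangularisation}, with two modifications: the hypothesis that $A$ is nilpotent is used to obtain a \emph{strict} flag (i.e.\ $A V_i \subset V_{i-1}$, not merely $A V_i \subset V_i$), and the freeness of rank one quotients over a B\'ezout domain is replaced by projectivity, which is still enough to split the chain.

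First, pass to the quotient field $K$ and set $V := M \otimes_R K$, so that $A$ extends to a nilpotent $K$-endomorphism $A_K$ of $V$. Build a basis $v_1,\dots,v_n$ of $V$ (with $n=\rk M=\dim_KV$) by successively extending a basis of $\ker A_K$ to one of $\ker A_K^2$, and so on, until $\ker A_K^k=V$; let $V_j := Kv_1+\cdots+Kv_j$. For each $v_j$ lying in $\ker A_K^i\setminus \ker A_K^{i-1}$ we have $A_Kv_j\in \ker A_K^{i-1}\subset V_{j-1}$, because $j>\dim_K \ker A_K^{i-1}$. Hence the complete flag $0=V_0\subset V_1\subset\cdots\subset V_n=V$ satisfies $A_KV_j\subset V_{j-1}$ for every $j$.

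Now, as in \Cref{thm:bezouttriangularisation}, set $W_i := V_i\cap M$ to obtain a chain $0=W_0\subset W_1\subset\cdots\subset W_n=M$ of $R$-submodules with $AW_i\subset W_{i-1}$. \Cref{lem:finitemodules} gives that each $W_i$ is finitely generated, and \Cref{lem:torsionfree} that each quotient $W_i/W_{i-1}$ is torsion-free. Since $(W_i/W_{i-1})\otimes_R K\iso V_i/V_{i-1}$ has $K$-dimension one, $W_i/W_{i-1}$ is a finitely generated torsion-free rank $1$ module over a Pr\"ufer domain, hence isomorphic to a finitely generated fractional ideal, hence projective. Each short exact sequence
\[0\to W_{i-1}\to W_i\to W_i/W_{i-1}\to 0\]
therefore splits, and an induction yields a decomposition $M=\bigoplus_{i=1}^nM_i$ with $M_i\iso W_i/W_{i-1}$ of rank $1$ and $W_i=\bigoplus_{j\le i}M_j$. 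With respect to this decomposition, $AM_i\subset W_{i-1}=\bigoplus_{j<i}M_j$, which is exactly the statement that the components $a_{ji}\in\Hom_R(M_i,M_j)$ vanish for $j\ge i$; i.e.\ $A$ is strictly upper triangular.

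The only step that truly differs from \Cref{thm:bezouttriangularisation} is the construction of the strictly $A$-stable $K$-flag, which is where nilpotence enters in an essential way; everything else is a direct translation with projective ideal summands in place of free rank one summands, and I expect this to be the main (mild) obstacle to keep straight. In particular, unlike in \Cref{thm:bezouttriangularisation}, the resulting decomposition cannot in general be packaged as a change of basis of a free module $R^n$, since $M$ need not be free; the $(a_{ij})$ must be interpreted in the generalised sense of the definition preceding \Cref{ex:algebraclique}, with entries in $\Hom_R(M_j,M_i)$.
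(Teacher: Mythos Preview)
Your proof is correct and follows essentially the same route as the paper's: construct a strict $K$-flag using nilpotence, intersect with $M$, and split the resulting chain using that finitely generated torsion-free modules over a Pr\"ufer domain are projective. The only difference is cosmetic---you spell out the construction of the strict flag via the kernel filtration $\ker A_K\subset\ker A_K^2\subset\cdots$, whereas the paper simply asserts such a flag exists---and your closing remarks about interpreting the entries in $\Hom_R(M_j,M_i)$ match the paper's post-lemma discussion.
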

\begin{proof}
  Let $K$ be the quotient field of $R$ and $V:=M\tensor_R K$. Then $M\subset V$ since $M$ is torsion-free, and we can consider $A\in\End_K(V)$. Since $A$ is nilpotent, we can find a filtration of $K$-vector spaces
  \[  0=V_0\subset V_1\subset \dots \subset V_n=V,\]
  such that $AV_i\subset V_{i-1}$ for all $i=1,\dots, n$, and $\dim_KV_i=i$. If we let $W_i := R^n\cap V_i$ for $i=1,\dots, n$, then we obtain the following filtration of $R$-modules
  \[ 0=W_0\subset W_1\subset \dots \subset W_n = M, \]
  such that $AW_i\subset W_{i-1}$ for all $i=1,\dots,n$. Now, $W_i$ is a finitely generated $R$-module by \Cref{lem:finitemodules}, and so $W_i/W_{i-1}$ is also finitely generated. Now by \Cref{lem:torsionfree}, $W_i/W_{i-1}$ is a torsion-free as well, and so it is a finitely generated torsion-free module over a Pr\"{u}fer domain, hence projective by \cite[Theorem V.2.7]{fl01}. So the following exact sequence splits,
  \[ 0\to W_{i-1}\to W_{i}\to W_{i}/W_{i-1}\to 0, \]
  and we have $W_i = W_{i-1} \oplus M_i$ for some $R$-module $M_i$. So we obtain a decomposition $M = \oplus_{i=1}^n M_i$. Now consider the matrix representation $A=(a_{ij})$ with respect to this decomposition. Then $a_{ij}=0$ for all $1\le j\le i\le n$ since $AM_j\subset W_{j-1}$ and $W_{j-1}\cap M_i=0$. Hence $A$ is strictly upper triangular with respect to the decomposition $M=\oplus_{i=1}^nM_i$.
\end{proof}

\begin{remark}
  For a commutative Noetherian ring $R$, Yohe showed that every nilpotent matrix being similar to a strictly upper triangular matrix is equivalent to $R$ being a principal ideal ring \cite[Theorem 1]{Yohe67}. So for a non-PID Dedekind domain $R$, there are nilpotent matrices that are not similar to a strictly upper triangular matrix. Nevertheless, in \Cref{lem:hollowdecomp}, we show that by allowing the matrix entries to lie in $\Hom_R(M_j,M_i)$ instead of $R$, we can put any nilpotent matrix over $R$ in a strictly upper triangular form.
\end{remark}

\begin{corollary}\label{cor:nilpotentdedekind}
  Let $R$ be a Pr\"{u}fer domain with a clique of exceptional units $C:=\set{r_1,\dots,r_n}$ of size $n\ge 1$, and $M$ be a finitely generated projective $R$-module $M$ of rank $m\le n+1$. Then every nilpotent endomorphism $A\in\End_R(M)$ is a commutator. In particular, every nilpotent matrix in $\Mat_m(R)$ is a commutator for all $1\le m\le n+1$.
\end{corollary}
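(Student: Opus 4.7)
The plan is to combine \Cref{lem:hollowdecomp} with \Cref{thm:hollow} in the natural way. Since $M$ is a finitely generated projective module over the domain $R$, it is in particular torsion-free, so the hypotheses of \Cref{lem:hollowdecomp} are met. Applying that lemma to the nilpotent endomorphism $A$ produces a decomposition $M = \bigoplus_{i=1}^m M_i$ with respect to which $A$ is strictly upper triangular. In particular, all diagonal blocks $a_{ii}$ vanish, so $A$ is hollow with respect to this decomposition.

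Next, I would relabel the summands as $M = \bigoplus_{k=0}^{m-1} M_k$. Since $m \le n+1$, we have $m-1 \le n$, so the sub-collection $\{r_1, \ldots, r_{m-1}\} \subset C$ is still a clique of exceptional units (trivially, any subset of a clique is a clique). Now \Cref{thm:hollow} applies to our hollow endomorphism $A$ with this sub-clique and the $m$-term decomposition, yielding $X, A' \in \End_R(M)$ with $A = [X, A']$. Concretely, $X$ acts on $M_k$ as multiplication by $r_k$ (with the convention $r_0 := 0$), and $A'$ has off-diagonal blocks $(r_i - r_j)^{-1} a_{ij}$, which makes sense because each $r_i - r_j$ is a unit in $R$.

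For the final assertion about $\Mat_m(R)$, observe that $R^m$ is a finitely generated free, hence projective, $R$-module of rank $m$, and $\End_R(R^m) = \Mat_m(R)$. A nilpotent matrix in $\Mat_m(R)$ is therefore a nilpotent endomorphism of a projective $R$-module of rank $m \le n+1$, so the general statement applies. There is no real obstacle here: the entire content of this corollary has been packaged into \Cref{lem:hollowdecomp} (which produces the strictly upper triangular, and thus hollow, form) and \Cref{thm:hollow} (which expresses hollow endomorphisms as commutators via the exceptional units). The only minor bookkeeping is matching the index conventions and verifying that a clique of size $n$ furnishes the clique of size $m-1$ required by \Cref{thm:hollow}.
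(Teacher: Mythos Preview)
Your proposal is correct and follows essentially the same approach as the paper: apply \Cref{lem:hollowdecomp} to obtain a decomposition making $A$ hollow, then pass to a subclique of size $m-1$ and invoke \Cref{thm:hollow}. The paper's proof is terser, but you have filled in the minor details (projective implies torsion-free, index relabeling, the deduction for $\Mat_m(R)$) in exactly the intended way.
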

\begin{proof}
  By \Cref{lem:hollowdecomp}, we can construct a decomposition $M=\oplus_{k=1}^m M_k$ such that $A$ is hollow with respect to this decomposition. Since any subset of a clique of exceptional units is still a clique, we can take a subset of $C$ of size $m-1$ and apply \Cref{thm:hollow} to obtain $X,A'\in\End_R(M)$ such that $A=[X,A']$. 
\end{proof}

\dedekind
\begin{proof}
From \Cref{ex:algebraclique}, we have a clique of exceptional units $k^\times\subset R^\times$. So by  \Cref{cor:nilpotentdedekind}, every nilpotent endomorphism in $\End_R(M)$ is a commutator if $\rk M \le \# k^\times + 1 = \# k$. 
\end{proof}

\Cref{thm:dedekindalgebra} provides partial progress towards answering the following open question.
\begin{question}[{\cite[Section 7]{Stasinski16}}]\label{que:dedekindcommutator}
  Let $R$ be a Dedekind domain and $n\ge 3$. Is every $n\times n$ trace $0$ matrix in $\Mat_n(R)$ a commutator?
\end{question}
Note that a Dedekind domain is a Pr\"{u}fer domain. Lissner answered \Cref{que:dedekindcommutator} affirmatively in \cite[Appendix]{Lissner65} for $n=2$, but no progress has been made since for $n\ge 3$.

\begin{example}
For a fixed $n$, we can provide examples of non-PID number rings $R$ where every nilpotent matrix in $\Mat_m(R)$ is commutator for all $1\le m\le n$. We construct a large clique of exceptional units in $\Z[\zeta_p]$ for a prime $p$ where $\zeta_p= e^{2\pi i/p}$ based on an example in \cite[Section 3]{lenstra76}. Let $\omega_i:=\frac{\zeta_p^i-1}{\zeta_p-1}\in\Z[\zeta_p]$ for any $i=1,\dots, p-1$. Then the absolute norm of $\omega_i$ is 1, so $\omega_i\in \Z[\zeta_p]^\times$ for all $i=1,\dots, p-1$. Now given $1\le i<j\le p-1$, 
\[\omega_j-\omega_i = \zeta_p^i\omega_{j-i}\in\Z[\zeta_p]^\times, \]
so $\set{\omega_1,\dots,\omega_{p-1}}$ is a clique of exceptional units. Hence by \Cref{cor:nilpotentdedekind}, every nilpotent matrix in $\Mat_m(\Z[\zeta_p])$ is a commutator for all $1\le m\le p$. Moreover, $\Z[\zeta_p]$ has class number greater  than $1$ for all primes $p\ge 23$ (see \cite[Theorem 11.1]{washington97}), so $\Z[\zeta_p]$ will not be a PID for those primes. Hence for any prime $p\ge 23$, $\Z[\zeta_p]$ gives a non-trivial example which does not follow from \cite[Theorem 6.3]{Stasinski16}.
\end{example}

\section{Construction of Trace Zero Non-commutators}\label{sec:two}
In this section, we construct a trace $0$ non-commutator assuming we can solve a certain combinatorial problem which is stated in \Cref{q:mainWithd} in the next section. We will first give a couple of definitions that will be used in the problem and the main theorem \Cref{thm:counterexample}.

\begin{definition}
  Let $m\ge 1$ and $d\ge 0$ be integers. We will call a set $S\subset \Z_{\ge 0}^m$ \emph{$d$-separated} if for all distinct $s=(s_i), s'=(s'_i)\in S$,
  \[ |s-s'|_1:=\sum_{i=1}^{m}|s_i-s_i'| > d. \]
\end{definition}

\begin{definition}
  Let $n,r\in\Z_{\ge 0}$. Then the \emph{discrete $n$-simplex of length $r$} is
  \[ \Delta(n,r):=\set{v=(v_i)\in\Z_{\ge 0}^{n+1}}{\sum_{i=1}^{n+1} v_i = r}.\]
\end{definition}

We now give the construction of the trace $0$ non-commutator. Let $R$ be a commutative ring. We use the following notation in the theorem: given a point $s=(s_i)\in\Z^m_{\ge 0}$ and elements $x_1,\dots,x_m\in R$, we let $x^s:=\prod_{i=1}^mx_i^{s_i}\in R$. As usual, we set $r^0:=1$ for any $r\in R$.

\begin{theorem}
  \label{thm:counterexample}
  Let $R$ be a commutative ring with an ideal $I\subset R$ with the following property: there exist non-zero $x_1,\dots,x_m\in I$ with $m\ge 3$, and $d\ge 0$ such that for all $0\le k\le 3d+1$,
  \[I^k/I^{k+1} = \bigoplus_{\substack{t\in\Z_{\ge 0}^m \\ |t|_1=k}}(R/I)(x^t + I^{k+1}).\]
  In other words, $I^k/I^{k+1}$ is a free $R/I$-module and the degree $k$ monomials in the $x_i$'s form an $R/I$-basis of $I^k/I^{k+1}$ for all $0\le k\le 3d+1$. 

Suppose further that there exists a $2d$-separated set $S:=\set{s_1,\dots,s_{2n-1}}\subset{\Delta(m-1,2d+1)}\subset \Z_{\ge 0}^m$ with $n\ge 2$.
 Then
  \[ X := 
    \begin{pmatrix}
      x^{s_1} & x^{s_2} & \cdots& x^{s_{n-1}} & x^{s_n}\\
      x^{s_{n+1}} & 0 & \cdots& 0 & 0\\
      \vdots & \vdots & \ddots & \vdots & \vdots\\
      x^{s_{2n-2}} & 0 & \cdots &0& 0\\
      x^{s_{2n-1}} & 0 &\cdots&0 & -x^{s_1}
    \end{pmatrix}
  \]
  is a trace $0$ non-commutator in $\Mat_n(R)$.
\end{theorem}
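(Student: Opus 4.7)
Suppose for contradiction that $X = [A, B]$ for some $A, B \in \Mat_n(R)$. The plan is to reduce modulo $I^{3d+2}$ and use the freeness hypothesis to expand every matrix entry as a ``polynomial'' in the $x_i$'s with coefficients in $R/I$, turning the commutator equation into a family of linear identities in $\Mat_n(R/I)$ indexed by monomial degrees. Concretely, the hypothesis that $I^k/I^{k+1}$ is free on $\set{x^t}{|t|_1 = k}$ for $k\le 3d+1$ lets us expand any element of $\widebar R := R/I^{3d+2}$ as $\sum_{|t|_1 \le 3d+1} c_t\, x^t$ with $c_t \in R$ well-defined modulo $I$. Applying this entrywise to $A$ and $B$, one obtains expansions $A \equiv \sum_t A_t\, x^t$ and $B \equiv \sum_t B_t\, x^t \pmod{I^{3d+2}}$ with $A_t, B_t \in \Mat_n(R/I)$, so that collecting monomials in the commutator yields, for every $u$ with $|u|_1 \le 3d+1$,
\[
  \sum_{t_1 + t_2 = u} [A_{t_1}, B_{t_2}] = X^{(u)} \quad\text{in } \Mat_n(R/I),
\]
where $X^{(u)}$ is the matrix of $x^u$-coefficients of the entries of $X$. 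By construction, $X^{(u)} = 0$ unless $u = s_\ell$ for some $\ell$, in which case $X^{(u)}$ is a $\pm 1$ matrix supported on the prescribed sparse positions (row $1$, column $1$, or $(n,n)$).

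The next step is to use the $2d$-separation of $S$ to decouple these identities across different $\ell$. Since $|s_\ell|_1 = 2d+1$ and $|s_\ell - s_{\ell'}|_1 > 2d$ for $\ell \ne \ell'$, no pair $(t_1, t_2)$ of nonnegative vectors with $|t_1|_1 + |t_2|_1 = 2d+1$ can simultaneously decompose two distinct $s_\ell, s_{\ell'}$; the separation should also control how the low-degree unknowns $A_t, B_t$ (with $|t|_1 \le 2d$) propagate through different equations. The extension of the freeness hypothesis up to degree $3d+1$ (rather than just $2d+1$) is precisely what lets one invoke the auxiliary higher-degree identities $\sum_{t_1 + t_2 = u}[A_{t_1}, B_{t_2}] = 0$ for $2d+1 < |u|_1 \le 3d+1$ (where $X^{(u)} = 0$ since $u \notin S$), and so to chain together consequences of the main identities at $u = s_\ell$.

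\emph{Main obstacle.} The hardest step will be turning this rigid linear system into a concrete contradiction. My plan is to use the very sparse shape of $X$ --- specifically, the combination of $X_{1,1} = x^{s_1}$ balanced by $X_{n,n} = -x^{s_1}$, together with the many forced zeros at positions $(i,j)$ with $i, j \ge 2$ and $(i,j)\ne(n,n)$ --- to produce a linear functional on $\Mat_n(R/I)$ that vanishes on every matrix realizable as $\sum_{t_1+t_2 = s_\ell}[A_{t_1}, B_{t_2}]$ under the full system of constraints, but fails to vanish on $X^{(s_\ell)}$ for some $\ell$. Constructing such a functional explicitly --- by tracing which $A_t, B_t$ unknowns can contribute to each entry at each monomial --- is the delicate heart of the argument, and is presumably where the combinatorial conditions of the next section (namely, the existence of a $2d$-separated $S \subset \Delta(m-1, 2d+1)$ of size $2n-1$) are used in full force.
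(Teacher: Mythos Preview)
Your setup (expanding modulo $I^{3d+2}$ and collecting coefficients) is correct in spirit, but the proposal stops exactly where the real content begins, and the plan you sketch for the ``main obstacle'' is pointed in the wrong direction. You are looking for a linear functional on $\Mat_n(R/I)$ that kills the graded commutator pieces but not $X^{(s_\ell)}$. No such functional exists in the naive sense: trace vanishes on every $[A_{t_1},B_{t_2}]$, but $\tr X^{(s_\ell)}=0$ too (for $\ell=1$ the $(1,1)$ and $(n,n)$ contributions cancel, and for $\ell\ne 1$ the support is off-diagonal). The contradiction does not come from a functional applied to the single equation at degree $s_\ell$; it comes from a \emph{quadratic} consequence of $X=[A,B]$ that you have not written down.

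The missing idea is this: from $X=[A,B]$ one gets $\tr(AX)=\tr(A[A,B])=\tr([A,AB])=0$, and likewise $\tr(BX)=0$. Because of the sparse shape of $X$ (nonzero only in the first row, first column, and the $(n,n)$ position, with $a_{nn}=-x^{s_1}$), after normalising $a_{nn}=b_{nn}=0$ the identity $\tr(AX)=0$ reads
\[
0=\sum_{i=1}^n a_{i1}x^{s_i}+\sum_{j=2}^n a_{1j}x^{s_{n-1+j}}.
\]
Now suppose some $a_{1j}$ or $a_{i1}$ lies in $I^k\setminus I^{k+1}$ with $k\le d$; pick a monomial $x^{t_0}$ (with $|t_0|_1=k$) appearing in its expansion. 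The corresponding term $x^{s_j+t_0}$ sits in degree $k+2d+1\le 3d+1$, where the basis hypothesis still applies, so it must be cancelled by some $x^{s_i+u}$ with $|u|_1=k$. But $s_i+u=s_j+t_0$ forces $|s_i-s_j|_1=|t_0-u|_1\le 2k\le 2d$, contradicting $2d$-separation. Hence all first-row and first-column entries of $A$ (and, by the same argument with $\tr(BX)=0$, of $B$) lie in $I^{d+1}$. Then the $(1,1)$ entry of $[A,B]$ lies in $I^{2d+2}$, whereas $x^{s_1}\in I^{2d+1}\setminus I^{2d+2}$ --- the contradiction.

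So the $2d$-separation is used not to ``decouple equations across different $\ell$'' as you suggest, but to prevent cancellation inside the single scalar equation $\tr(AX)=0$; and the range $0\le k\le 3d+1$ in the basis hypothesis is needed precisely because that equation lives in degrees $2d+1$ through $3d+1$. Your graded-coefficient framework can be made to encode all of this, but without the $\tr(AX)=0$ identity you have no leverage.
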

\begin{proof}
  Since $\tr(X)= x^{s_1}-x^{s_1}=0$, the only remaining thing we need to show is that $X$ is not a commutator. 
  Suppose by contradiction that $X=[B,C]$ for some $B=(b_{ij}),C=(c_{ij})\in M_n(R)$. Then
  \[[B-b_{nn}I_n,C-c_{nn}I_n] = [B,C] = X,\]
  so we may assume $b_{nn}=0=c_{nn}$ by taking $B-b_{nn}I_n$ and $C-c_{nn}I_n$.

  Now suppose $b_{i1},b_{1i},c_{i1},c_{1i}\in I^{d+1}$ for all $i=1,\dots, n$. Then from $X=[B,C]$, we have
  \[a_{11} = \sum_{i=1}^n (b_{1i}c_{i1} - c_{1i}b_{i1})\in I^{2d+2}.\]
  But $a_{11}=x^{s_1}\in I^{2d+1}$ is part of the $R/I$-basis of $I^{2d+1}/I^{2d+2}$, so $a_{11}\notin I^{2d+2}$, which is a contradiction. Hence to show that $X$ is a non-commutator, we only need to show that $b_{i1},b_{1i},c_{i1},c_{1i}\in I^{d+1}$ for all $i$. 

We will first show that $b_{i1},b_{1i}\in I^{d+1}$ for all $i$. So let $k$ be the minimum integer such that $b_{i1}\in I^k\setminus I^{k+1}$ or $b_{1i}\in I^k\setminus I^{k+1}$ for some $i$. If $k\ge d+1$ or no such $k$ exists then we are done so assume $k\le d$. Without loss of generality, assume that $b_{1j}\in I^k\setminus I^{k+1}$. Then writing $b_{1j}$ in terms of the monomial basis in $I^k/I^{k+1}$, we have 
\[b_{1j} \equiv \sum_{t\in\Delta(m-1,k)}r_tx^{t} \mod I^{k+1},\]
with $r_t\in R$ for all $t\in\Delta(m-1,k)\subset\Z^{m}_{\ge 0}$ such that $r_{t_0}\notin I$ for some $t_0\in \Delta(m-1,k)$.
Now from
  $X=[B,C]$, we have
  \[\tr(BX) = \tr(B[B,C]) = \tr(BBC - BCB) = \tr([B,BC]) = 0. \]
  Since $b_{nn}=0$, we have
  \begin{equation}
  0 =\tr(BX) = \sum_{i=0}^n\sum_{j=0}^n b_{ij}a_{ji} = a_{11}b_{11} + \sum_{i=2}^n b_{i1}a_{1i} + \sum_{j=2}^n  b_{1j}a_{j1}\label{eq:1}.
  \end{equation}
  Now, $a_{1j}=x^{s_j}$ with $|s_j|_1=2d+1$, so
  \[a_{1j}b_{j1} = x^{s_j}b_{j1} \equiv \sum_{t\in\Delta(m-1,k)}r_tx^{s_j+t} \in I^{k+2d+1}/ I^{k+2d+2}.\]
Now $r_{t_0}\in R\setminus I$, and so by considering \cref{eq:1} mod $I^{k+2d+2}$, we see that a term with a monomial $x^{s_j+t_0}$ must also appear in $b_{1i}a_{i1}$ or $b_{i1}a_{1i}$ mod $I^{k+2d+2}$ for some $i$. So suppose $b_{i1}a_{1i}$ contains such a term. Then we have $x^{s_j+t_0} \equiv x^ux^{s_i} \mod I^{k+2d+2}$ where $x^u$ comes from $b_{i1}$ and $|u|_1=k$. So we have $s_j+t_0 = u+s_i$ which implies
  \[|s_i - s_j| = |t_0-u| \le |t_0| + |u| = 2k\le 2d.\]
  This is a contradiction since $S$ is $2d$-separated. Hence $k\ge d+1$, and so $b_{1i},b_{i1}\in I^{d+1}$ for all $i$. By similar argument, $c_{1i},c_{i1}\in I^{d+1}$ as well. Hence no such matrices $B$ and $C$ can exist and $X$ is a non-commutator.
\end{proof}

The following corollary generalises the result of Mesyan \cite[Prop. 20]{Mesyan06}.
\begin{corollary}\label{cor:d0}
  Let $R$ be a ring and suppose that $I:=(x_1,\dots,x_m)$ is an ideal such that $I/I^2$ is a free $R/I$-module with the classes of $x_1,\dots,x_m$ in $I/I^2$ forming an $R/I$-basis. Then for any $2\le n\le \tfrac{m+1}{2}$,
  \[X:=\begin{pmatrix}
      x_1 & x_2 & \cdots & x_{n-1} & x_n\\
      x_{n+1} & 0 & \cdots & 0 & 0\\
      \vdots & \vdots & \ddots & \vdots & \vdots\\
      x_{2n-2} & 0 & \cdots & 0 & 0\\
      x_{2n-1} & 0 & \cdots & 0 & -x_1
  \end{pmatrix}\]
is not a commutator in $\Mat_n(R)$.
In particular, if a Noetherian ring $R$ has Krull dimension $m\ge 3$, then there exists a trace $0$ non-commutator in $\Mat_n(R)$ for any $2\le n\le \tfrac{m+1}{2}$.
\end{corollary}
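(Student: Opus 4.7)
The plan is to apply \Cref{thm:counterexample} with parameter $d = 0$. With $d = 0$, the requirement that $I^k/I^{k+1}$ be a free $R/I$-module with monomial basis for $0 \le k \le 3d+1 = 1$ reduces, after discarding the trivial case $k = 0$, to the single condition that $I/I^2$ be free over $R/I$ with basis the classes of $x_1, \ldots, x_m$; this is exactly the hypothesis of the corollary. The discrete simplex $\Delta(m-1, 2d+1) = \Delta(m-1, 1)$ is the set of standard unit vectors $\{e_1, \ldots, e_m\} \subset \Z_{\ge 0}^{m}$, and being $2d = 0$-separated only means having pairwise distinct elements. So a $0$-separated subset of $\Delta(m-1, 1)$ of size $2n-1$ exists precisely when $2n-1 \le m$, which is the assumption $n \le (m+1)/2$. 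Choosing $S := \{e_1, \ldots, e_{2n-1}\}$ gives $x^{e_i} = x_i$, so the matrix produced by \Cref{thm:counterexample} coincides with the matrix $X$ in the corollary, yielding the first assertion.

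For the ``in particular'' clause, let $R$ be Noetherian of Krull dimension $m \ge 3$ and fix a maximal ideal $\fm \subset R$ of height $m$ (such a maximal ideal exists: any chain of primes of length $m$ terminates at a prime of height $m$, which must be maximal for dimension reasons). Let $e := \dim_{R/\fm}\bigl(\fm R_\fm / (\fm R_\fm)^2\bigr) \ge m$ be the embedding dimension of $R_\fm$, and pick $x_1, \ldots, x_e \in \fm$ whose images in $\fm R_\fm$ form a minimal generating set. By Nakayama, $\fm R_\fm / (\fm R_\fm)^2$ is a free $R/\fm$-module of rank $e$ with basis the classes of the $x_i$, so the first part of the corollary applies to $R_\fm$ with $I := \fm R_\fm$ and produces, for every $2 \le n \le (e+1)/2$, a trace $0$ non-commutator $X \in \Mat_n(R_\fm)$; since $e \ge m$ this range covers $2 \le n \le (m+1)/2$. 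Because each $x_i$ already lies in $R$, the matrix $X$ itself lies in $\Mat_n(R)$, and any commutator representation $X = [B, C]$ there would pass to $\Mat_n(R_\fm)$ under the localization map, contradicting the non-commutator status just established.

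The genuine content is carried by \Cref{thm:counterexample}; I anticipate no real obstacle in this corollary. The combinatorial reduction to $d = 0$ is immediate, and the only step requiring a little care is ensuring that the matrix constructed via the localization $R_\fm$ actually has entries in $R$, which is handled by picking the $x_i$ in $R$ before passing to $R_\fm$.
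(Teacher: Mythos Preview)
Your proof is correct and follows the same approach as the paper: apply \Cref{thm:counterexample} with $d=0$, noting that $\Delta(m-1,1)=\{e_1,\ldots,e_m\}$ and any subset is automatically $0$-separated, so a $(2n-1)$-element $S$ exists exactly when $2n-1\le m$. For the ``in particular'' clause the paper works directly with a maximal ideal $\fm\subset R$ of maximal height (using $\dim_{R/\fm}\fm/\fm^2\ge m$) rather than passing to $R_\fm$; your localize-then-lift argument is an equally valid minor variant, and is arguably cleaner about verifying the hypothesis $I=(x_1,\ldots,x_e)$ with the $x_i$ an honest basis of $I/I^2$.
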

\begin{proof}
  We will use \Cref{thm:counterexample} to construct the trace $0$ non-commutator. If we take $d=0$, then the basis condition in the theorem is equivalent to $I/I^2$ being a free $R/I$-module. For the set $S$ in the theorem, if $d=0$ then any set $S\subset\Delta(m-1,1)$ is automatically $2d$-separated. Hence we can take $S$ to be $\Delta(m-1,1)=\set{e_1,\dots,e_m}\subset\Z_{\ge0}^m$ where $e_i$'s are the standard basis of $\Z^m$. Now if $n\le \tfrac{m+1}{2}$, then we may take $2n-1$ points inside $\Delta(m-1,1)$. Hence by \Cref{thm:counterexample}, $X$ is a non-commutator. The second part follows since if we take $I\subset R$ to be a maximal ideal with maximum height, then $\dim_{R/I}I/I^2\ge \dim R=m$ (see Theorem 13.5 in \cite{Matsumura}).
\end{proof}

\begin{remark}
  Let $R$ be a ring and $\fm\subset R$ be a maximal ideal such that $\dim_{R/\fm}\fm/\fm^2\ge 3$. Then by \Cref{cor:d0}, there exists a trace $0$ non-commutator in $\Mat_2(R)$. This also follows from \cite[Prop. 20]{Mesyan06}. Such a maximal ideal exists if $R$ is a Noetherian integrally closed domain of dimension $2$ that is not regular. Indeed, by Serre's criterion \cite[Theorem 23.8]{Matsumura}, $R$ being integrally closed implies $R_\fp$ is regular for any prime ideal $\fp\subset R$ with $\height(\fp)\le 1$. But $R$ is not regular, so $R_\fm$ must be singular at some maximal ideal $\fm\subset R$ of $\height(\fm)=2$. At such maximal ideal $\fm$, $\dim_{R/\fm}\fm/\fm^2 > \dim R_\fm=\height(\fm) = 2$ and so there exists a trace $0$ non-commutator in $\Mat_2(R)$.
\end{remark}

The following result of Lissner is a special case of \Cref{cor:d0}.
\begin{corollary}[{\cite[Theorem 5.4]{Lissner61}}]\label{thm:lissner}
  Let $K$ be a field, $n\ge 2$ be an integer and $R=K[x_1,\dots, x_m]$, with $m\ge 2n-1$. Then there exists a trace $0$ non-commutator in $\Mat_n(R)$.
\end{corollary}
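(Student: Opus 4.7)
The plan is to deduce this from \Cref{cor:d0} by checking its hypotheses for the polynomial ring $R = K[x_1,\dots,x_m]$.

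First I would take $I \subset R$ to be the ideal $(x_1, \dots, x_m)$ generated by the variables, i.e.\ the maximal ideal at the origin. Then $R/I \cong K$ is a field, and the quotient $I/I^2$ is the degree $1$ part of the associated graded ring $\gr_I(R)$, which is isomorphic as an $R/I$-module to the free $K$-module with basis given by the images of $x_1,\dots,x_m$. So the freeness hypothesis of \Cref{cor:d0} on $I/I^2$ is satisfied.

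Next, the hypothesis $m \ge 2n-1$ is exactly the condition $n \le \tfrac{m+1}{2}$ appearing in \Cref{cor:d0}. Since $n \ge 2$, we are in the range to which the corollary applies. Therefore \Cref{cor:d0} produces the explicit $n \times n$ trace $0$ matrix
\[
X=\begin{pmatrix}
    x_1 & x_2 & \cdots & x_{n-1} & x_n\\
    x_{n+1} & 0 & \cdots & 0 & 0\\
    \vdots & \vdots & \ddots & \vdots & \vdots\\
    x_{2n-2} & 0 & \cdots & 0 & 0\\
    x_{2n-1} & 0 & \cdots & 0 & -x_1
\end{pmatrix}
\]
which fails to be a commutator in $\Mat_n(R)$.

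There is no real obstacle here: the result is a direct specialization, since the polynomial ring at its irrelevant maximal ideal is the archetypal situation realizing the freeness of $I/I^2$. One could alternatively observe that $R$ is a regular Noetherian domain of Krull dimension $m\ge 3$ (when $n\ge 2$ we have $m\ge 3$) and invoke the second statement of \Cref{cor:d0}, but the direct verification above avoids even that small detour.
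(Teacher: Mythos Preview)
Your proposal is correct and matches the paper's approach exactly: the paper simply states that this corollary is a special case of \Cref{cor:d0} without further proof, and your verification that the maximal ideal $I=(x_1,\dots,x_m)$ of the polynomial ring satisfies the freeness hypothesis on $I/I^2$ is precisely the intended specialization.
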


We now demonstrate that the condition in \Cref{thm:counterexample} about the structure of $I^k/I^{k+1}$ is relatively easy to satisfy. The following lemma gives a large class of a ring and an ideal satisfying the condition. Recall that given a commutative ring $R$ and an ideal $I\subset R$, we can construct the \emph{associated graded ring} $\gr_I(R) := \oplus_{i\ge 0}I^{i}/I^{i+1}$. Then given an element $r\in R$, we can associate an element $\initial{r}\in\gr_I(R)$ called the \emph{initial form} defined as follows. Let $j\ge 0$ be such that $r\in I^j\setminus I^{j+1}$. If no such $j$ exists, then $r\in\cap_iI^i$, and we take $\initial{r} := 0\in \gr_I(R)$. Otherwise take $\initial{r}:=r+I^{j+1}\in I^j/I^{j+1}\subset\gr_I(R)$.

\begin{lemma}
  \label{lem:regular}
  Let $R$ be a ring and $\fm\subset R$ be a maximal ideal such that $R_\fm$ is a regular local ring (see \cite[Section 14]{Matsumura} for the definition of a regular local ring).
  Then 
  \[\gr_\fm(R) \iso k_\fm[x_1,\dots,x_m], \]
  where $k_\fm:=R/\fm$ is the residue field and $m$ is the dimension of $R_\fm$ (which is finite since $R_\fm$ is a Noetherian local ring) and $k_\fm[x_1,\dots,x_m]$ is a polynomial ring in $m$ variables.
  Moreover, there exist $y_1,\dots,y_m\in \fm$ such that $\initial{y_1},\dots,\initial{y_m}$ map to $x_1,\dots,x_m$ under the above isomorphism and the degree $k$ monomials in the $y_i$'s form a $k_\fm$-basis of $\fm^k/\fm^{k+1}$ for all $k\ge 0$.
\end{lemma}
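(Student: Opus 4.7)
The plan is to reduce the statement to the standard structure theorem for regular local rings---that the associated graded ring of a regular local ring is a polynomial ring---by showing that $\gr_\fm(R)$ depends only on the localization at $\fm$.

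First I would verify that $\gr_\fm(R)\iso \gr_{\fm R_\fm}(R_\fm)$ as graded $k_\fm$-algebras. The key observation is that each graded piece $\fm^k/\fm^{k+1}$ is annihilated by $\fm$ and hence is a $k_\fm$-module, so every element of $R\setminus\fm$ already acts invertibly on it. Consequently the canonical localization map $\fm^k/\fm^{k+1}\to (\fm^k/\fm^{k+1})_\fm$ is an isomorphism. Exactness of localization then gives $(\fm^k/\fm^{k+1})_\fm\iso (\fm R_\fm)^k/(\fm R_\fm)^{k+1}$, and summing over $k$ yields the claimed graded isomorphism. In particular $\fm/\fm^2 \iso \fm R_\fm/(\fm R_\fm)^2$ as $k_\fm$-vector spaces, both of dimension $m$.

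Next I would invoke the standard characterization of regular local rings (see e.g.\ \cite[Theorem 17.4]{Matsumura}): for any Noetherian local ring $(A,\fn)$ with residue field $k$, a choice of elements $\tilde y_1,\dots,\tilde y_m\in\fn$ whose images form a $k$-basis of $\fn/\fn^2$ produces a surjective graded $k$-algebra homomorphism $k[x_1,\dots,x_m]\twoheadrightarrow \gr_\fn(A)$ with $x_i\mapsto \initial{\tilde y_i}$, and this surjection is an isomorphism precisely when $A$ is regular of dimension $m$ (the injectivity being a comparison of Hilbert series, both equal to $(1-t)^{-m}$). Applied to $A=R_\fm$, which is regular of dimension $m$ by hypothesis, this yields $\gr_{\fm R_\fm}(R_\fm)\iso k_\fm[x_1,\dots,x_m]$.

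To produce the $y_i$ promised by the lemma, I would lift a $k_\fm$-basis of $\fm/\fm^2$ to elements $y_1,\dots,y_m\in\fm$. Via the isomorphism $\fm/\fm^2\iso \fm R_\fm/(\fm R_\fm)^2$ their images form a regular system of parameters for $R_\fm$, so by the previous paragraph the composite
\[
k_\fm[x_1,\dots,x_m]\longrightarrow \gr_{\fm R_\fm}(R_\fm)\iso \gr_\fm(R), \qquad x_i\mapsto \initial{y_i},
\]
is an isomorphism of graded $k_\fm$-algebras. Transporting the monomial basis of the degree $k$ component of $k_\fm[x_1,\dots,x_m]$ through this isomorphism then gives the final claim that degree $k$ monomials in $y_1,\dots,y_m$ form a $k_\fm$-basis of $\fm^k/\fm^{k+1}$.

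The only real obstacle is the reduction from $R$ to its localization in the first step; once this is in hand, the lemma is a direct application of the well known structure theorem for regular local rings applied to $R_\fm$, and the construction of the $y_i$ is essentially formal.
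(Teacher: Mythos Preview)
Your proposal is correct and follows essentially the same route as the paper: both arguments reduce to the localization via $\gr_\fm(R)\iso\gr_{\fm R_\fm}(R_\fm)$ (the paper simply asserts this from maximality of $\fm$, while you spell out why elements of $R\setminus\fm$ act invertibly on each graded piece), then invoke the standard structure theorem for regular local rings (the paper cites \cite[Theorem 14.4]{Matsumura} rather than 17.4), and finally lift the degree-one generators to obtain the $y_i$. Your version is slightly more explicit about the lifting step, but there is no substantive difference.
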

\begin{proof}
  Since $R_\fm$ is a regular local ring, 
  \[\gr_{\fm R_\fm}(R_\fm)\iso k_\fm[x_1,\dots,x_m],\] 
  as graded rings by Theorem 14.4 in \cite{Matsumura}. Now $\fm$ is maximal, so $\fm^i/\fm^{i+1}\iso (\fm R_\fm)^i/(\fm R_\fm)^{i+1}$ for all $i\ge 0$, and hence $\gr_\fm(R)\iso\gr_{\fm R_\fm}(R_\fm)$. So we have
  \[ \gr_\fm(R)\iso \gr_{\fm R_\fm}(R_\fm)  \iso k_\fm[x_1,\dots,x_m].\]
  They are isomorphic as graded rings, so there exist $y_1,\dots,y_m\in R$ such that $\initial{y_i}$ maps to $x_i$. The above isomorphism also implies that the degree $k$ monomials in $y_i$'s form a $k_\fm$-basis of $\fm^k/\fm^{k+1}$ for all $k\ge 0$ since it is true for the polynomial ring on the right hand side.
\end{proof}

We now state the main result combining \Cref{thm:counterexample} and the combinatorial result \Cref{thm:quadraticS}.
\main 
\begin{proof}
  We will use \Cref{thm:counterexample} to construct the trace $0$ non-commutator, so we need to check the basis condition, and construct the set $S$ in the assumption of the theorem.
  By \Cref{lem:regular}, the basis condition in \Cref{thm:counterexample} is satisfied for all $d$, and by \Cref{thm:quadraticS}, if $m$ is odd, then there exists an $S$ with $\# S= \tfrac{(m+1)^2}{4}$. So if $2n-1\le \tfrac{(m+1)^2}{4}$, then the assumption of \Cref{thm:counterexample} is satisfied and there exists a trace $0$ non-commutator in $M_n(R)$. If we rearrange $2n-1\le \tfrac{(m+1)^2}{4}$, we obtain $n \le  \tfrac{m^2+2m+5}{8}$. For $m$ even, there exists an $S$ with $\# S = 2n-1\le \tfrac{m(m+2)}{4}$ from \Cref{thm:quadraticS}, and we may rearrange the inequality to $n \le \tfrac{m^2+2m+4}{8}$. But if $m$ is even and $n$ is an integer, then $n \le \tfrac{m^2+2m+4}{8}$ is equivalent to $n \le \tfrac{m^2+2m+5}{8}$ so we have the stated inequality for $m$ even as well.
\end{proof}

For an ideal $I$ of a ring $R$, let $\nu(I)$  be the minimal number of generators of $I$. In the case of a Noetherian local ring $R$ and its maximal ideal $\fm$, we have $\nu(\fm) = \dim_{R/\fm}(\fm/\fm^2)$ by Nakayama's Lemma \cite[Theorem 2.3]{Matsumura}.
\begin{corollary}\label{cor:numax}
  Let $R$ be a Noetherian ring and let $n\ge 2$ be such that every trace $0$ matrix in $\Mat_n(R)$ is a commutator. Then for all maximal ideals $\fm$ of $R$,
  \[\nu(\fm) <
    \begin{cases}
      2\sqrt{2n-1} & \text{if }R_\fm\text{ is regular},\\
      2n-1 & \text{otherwise}.
    \end{cases}
  \]
\end{corollary}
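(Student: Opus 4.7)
Plan: The proof will go by contrapositive in each case. The idea is that if the stated bound on $\nu(\fm)$ is violated, then \Cref{thm:main} (in the regular case) or \Cref{cor:d0} (in the non-regular case) produces a trace $0$ non-commutator, contradicting the hypothesis. The preliminary reduction I would do is to pass from $R$ to the local ring $R_\fm$. Specifically, I would show that the hypothesis descends to $R_\fm$: given a trace $0$ matrix $A \in \Mat_n(R_\fm)$, write $A = s^{-1} A'$ with $A' \in \Mat_n(R)$ and $s \in R \setminus \fm$; since $\tr(A) = 0$ in $R_\fm$, some $t \in R \setminus \fm$ kills $\tr(A')$ in $R$, so $tA'$ is trace $0$ in $R$, admits a commutator decomposition by hypothesis, and this decomposition descends to one for $A$ over $R_\fm$. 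In the Noetherian local ring $R_\fm$, Nakayama's lemma gives $\nu(\fm R_\fm) = \dim_{R/\fm}(\fm/\fm^2)$, and this is what the corollary's bound on $\nu(\fm)$ really controls.

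For the regular case, I would suppose toward contradiction that $\nu(\fm) \ge 2\sqrt{2n-1}$, with $R_\fm$ regular of dimension $m$. Under the identification above, $m = \nu(\fm R_\fm) \ge 2\sqrt{2n-1}$; squaring gives $m^2 \ge 8n-4$, hence $n \le (m^2+2m+5)/8$. Moreover, $n \ge 2$ forces $m \ge 2\sqrt{3} > 3$, so $m \ge 3$. Thus \Cref{thm:main} applied to $R_\fm$ produces a trace $0$ non-commutator in $\Mat_n(R_\fm)$, contradicting the localized hypothesis.

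For the non-regular case, I would suppose $\nu(\fm) \ge 2n-1$ with $R_\fm$ non-regular and set $m = \nu(\fm R_\fm) = \dim_{R/\fm}(\fm/\fm^2) \ge 2n-1$. Choosing $x_1, \ldots, x_m \in \fm R_\fm$ to be a minimal generating set (equivalently, an $R/\fm$-basis of $\fm R_\fm/(\fm R_\fm)^2$), the hypothesis of \Cref{cor:d0} is satisfied with $I = \fm R_\fm$, since $I/I^2$ is then the free $R_\fm/\fm R_\fm$-module with basis $\{x_i\}$. The inequality $m \ge 2n-1$ rearranges to $n \le (m+1)/2$, so \Cref{cor:d0} produces a trace $0$ non-commutator in $\Mat_n(R_\fm)$, again contradicting the localized hypothesis.

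The main obstacle is the cleanly handling the relationship between $\nu(\fm)$ (the global minimal number of generators) and the local invariant $\dim_{R/\fm}(\fm/\fm^2)$; these can differ in general for non-local Noetherian rings, so the corollary must be read through the lens of the localization step, which both transports the commutator hypothesis to $R_\fm$ and allows Nakayama's lemma to collapse the two notions. Once this reduction is in place, both inequalities are direct algebraic rearrangements of the hypotheses of \Cref{thm:main} and \Cref{cor:d0}.
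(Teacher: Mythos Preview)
Your overall strategy is right, and the localization step (showing that the commutator hypothesis passes from $R$ to $R_\fm$) is both correct and a nice touch; the paper instead applies \Cref{thm:main} and \Cref{cor:d0} directly to $R$, since those results already produce non-commutators in $\Mat_n(R)$ from data at a single maximal ideal, so no localization is needed there.

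However, there is a genuine gap in how you pass between $\nu(\fm)$ and $\nu(\fm R_\fm)$. You write that localization ``collapses the two notions,'' and in the regular case you go from the contrapositive assumption $\nu(\fm)\ge 2\sqrt{2n-1}$ directly to $m=\nu(\fm R_\fm)\ge 2\sqrt{2n-1}$. But the only general inequality is $\nu(\fm R_\fm)\le \nu(\fm)$ (any global generating set localizes), which points the wrong way for your contrapositive. Nakayama gives $\nu(\fm R_\fm)=\dim_{R/\fm}\fm/\fm^2$, but says nothing about lifting a local minimal generating set to a global one, so it does not identify $\nu(\fm)$ with $\nu(\fm R_\fm)$.

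The missing ingredient is a theorem of Davis--Geramita: for a maximal ideal $\fm$ in a Noetherian ring, $\nu(\fm)=\nu(\fm R_\fm)$ when $R_\fm$ is not regular, and $\nu(\fm)\le \nu(\fm R_\fm)+1$ when $R_\fm$ is regular. The paper argues forward: from the hypothesis it gets $n>\tfrac{m^2+2m+5}{8}$ (resp.\ $n>\tfrac{m+1}{2}$) with $m=\nu(\fm R_\fm)$, rearranges to $2\sqrt{2n-1}-1>\nu(\fm R_\fm)$ (resp.\ $2n-1>\nu(\fm R_\fm)$), and then invokes Davis--Geramita to replace $\nu(\fm R_\fm)$ by $\nu(\fm)$, absorbing the extra $+1$ in the regular case. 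Your contrapositive can be repaired the same way: from $\nu(\fm)\ge 2\sqrt{2n-1}$ and Davis--Geramita you get $m=\nu(\fm R_\fm)\ge 2\sqrt{2n-1}-1$, hence $(m+1)^2\ge 8n-4$ and $n\le\tfrac{m^2+2m+5}{8}$, and $m\ge 2\sqrt{3}-1>2$ forces $m\ge 3$; the non-regular case then goes through verbatim since there $\nu(\fm)=\nu(\fm R_\fm)$. Without this input, though, the step you flagged as an ``identification'' is unjustified.
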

\begin{proof}
  Suppose $R_\fm$ is regular. Since every $n\times n$ trace $0$ matrix is a commutator, $n >\frac{m^2+2m+5}{8}$ where $m=\dim R_\fm = \dim_{R/\fm}(\fm/\fm^2) = \nu(\fm R_\fm)$ by \Cref{thm:main}. Hence $2\sqrt{2n-1}-1> m=\nu(\fm R_\fm)$.
  Now by \cite[Theorem 1]{dg77}, $\nu(\fm R_\fm) +1 \ge  \nu(\fm)$ if $R_\fm$ is regular, so 
\[ 2\sqrt{2n-1}> \nu(\fm).\]  
Hence the stated inequality follows. 

  For $R_\fm$ not regular, we apply \Cref{cor:d0} to obtain $n > \frac{m+1}{2}$ where $m=\dim_{R/\fm}(\fm/\fm^2) = \nu(\fm R_\fm)$. Hence
  \[ 2n-1 > m=\nu(\fm R_\fm).\]
  Now by \cite[Theorem 1]{dg77}, $\nu(\fm R_\fm ) = \nu(\fm)$ if $R_\fm$ is not regular, so the stated inequality follows.
\end{proof}

\begin{remark}
If $R$ is a Noetherian ring such that every $2\times 2$ trace $0$ is a commutator, then \Cref{cor:numax} implies 
\[\nu(\fm) \le
  \begin{cases}
    3 &\text{if }R_\fm\text{ is regular},\\
    2 &\text{otherwise},
  \end{cases}
\]
for any maximal ideal $\fm\subset R$.
However, it is known that $\nu(\fm)\le 2$ for any maximal ideal $\fm$ if every $2\times 2$ trace $0$ matrix is a commutator (see \cite[(3) Lemma, (4) Lemma]{BDG80}). 
Note that we do not know whether every $2\times 2$ trace $0$ matrix is a commutator if $\nu(\fm)\le 2$ for all maximal ideals $\fm$. See however, \Cref{thm:local} and \Cref{cor:generatedby2} for cases where this is true.
\end{remark}

In view of \Cref{thm:main}, it is natural to ask the following question.
\begin{question}
  Let $R$ be a Noetherian ring such that there exists $c>0$ with $\dim_{R/\fm}\fm/\fm^2\le c$ for all maximal ideals $\fm\subset R$. Does there exist an $n_0=n_0(R)\ge 1$ such that for all $n\ge n_0$, every trace $0$ matrix in $\Mat_n(R)$ is a commutator?
\end{question}
The example produced in \Cref{thm:unbounded} shows that the answer to this question is negative if the assumption on the existence of such $c$ is omitted. The simplest rings for which the question is open are regular local rings of dimension $2$ and Dedekind domains. For both of those types of rings, it is not known whether every $n\times n$ trace $0$ matrix is a commutator for any $n\ge 3$.

Let us now summarise below the largest trace $0$ non-commutator we can construct for a fixed ring and an ideal using \Cref{thm:counterexample}.
\begin{theorem}\label{thm:largestn}
  Let $R$ be a polynomial ring in $m\ge 3$ variables over a field and $d \ge 0$ or, more generally, let $R$ be a ring with an ideal $I\subset R$ satisfying the basis condition from \Cref{thm:counterexample}. That is, there exist some non-zero $y_1,\dots,y_m\in I$ with $m\ge 3$ such that the degree $k$ monomials in the $y_i$'s form an $R/I$-basis of $I^k/I^{k+1}$ for all $0\le k\le 3d+1$.  Then the following list describes the sizes $n$ for which we can construct a trace $0$ non-commutator in $\Mat_n(R)$.
  \begin{enumerate}
  \item If $d=0$, then any $2\le n\le\tfrac{m+1}{2}$.
  \item If $d=1$ and $m\not\equiv 5\pmod{6}$, then any $2\le n\le\tfrac{1}{2}(\floor{\frac{m}{3}\floor{\frac{m-1}{2}}}+ m + 1)$.
  \item If $d=1$ and $m\equiv 5\pmod{6}$, then any $2\le n\le\tfrac{1}{2}(\floor{\frac{m}{3}\floor{\frac{m-1}{2}}}+ m)$.
  \item If $m=3$, then $n=2$.
  \item If $m\ge 4$ and $d\ge m-1$, then any $2\le n\le \frac{m^2+2m+5}{8}$.
  \item For certain specific $m$ and $d$, the corresponding entry in the table below is the size of the largest non-commutator matrix that \Cref{thm:counterexample} can construct.  When the entry is in bold, it is bigger than the upperbound provided by (5).
\begin{table}[H]
  \centering
    \begin{tabular}{|>{\bfseries}c| *{11}{c}|}
      \hline
      \diagbox[width=2.5em]{m}{\kern-1em d} & \textbf{2} & \textbf{3} & \textbf{4} & \textbf{5} & \textbf{6} & \textbf{7} & \textbf{8} & \textbf{9} & \textbf{10} & \textbf{11} &\textbf{12} \\ \hline
      4& 3 & 3 & \textbf{4} & \textbf{4} & \textbf{4} & \textbf{4} & \textbf{4} 
& \textbf{4} &  \textbf{4} & \textbf{4} & \textbf{4}\\
      5& 5 & 5 & 5 & \textbf{6} & \textbf{6} & \textbf{6} & \textbf{6} & \textbf{6} 
&  \textbf{7} &   & \\
      6& 6 & 7 & \textbf{8} &   &   &   &   &   &    &   &\\
      7& 9 &   &   &   &   &   &   &   &    &   &\\ 
      8& 12 &   &   &   &   &   &   &   &    &   &\\\hline
    \end{tabular}
   \caption{Size of the largest non-commutator matrix that \Cref{thm:counterexample} can construct}
 \end{table}
  \end{enumerate}
\end{theorem}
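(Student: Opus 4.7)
The overall strategy is to apply Theorem~\ref{thm:counterexample} in each case. The basis hypothesis there is automatic in both of the hypothesised settings, either by Lemma~\ref{lem:regular} for a polynomial ring, or by the abstract assumption on $R$ and $I$. So the task reduces to exhibiting, for each $(m,d)$, a $2d$-separated set $S \subset \Delta(m-1, 2d+1)$ of cardinality $2n-1$ for the largest claimed $n$; Theorem~\ref{thm:counterexample} then produces the required trace zero non-commutator. Each numbered item becomes a specific combinatorial claim about the maximal such $\#S$.

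For (1) with $d=0$, distinct points of $\Delta(m-1,1)$ are already at $\ell_1$-distance exactly $2$, so $S = \Delta(m-1,1)$ itself is $0$-separated of size $m$; this recovers Corollary~\ref{cor:d0}. For (5) with $d \ge m-1$, I would invoke Theorem~\ref{thm:quadraticS} to obtain a $2d$-separated set of size $\lfloor(m+1)^2/4\rfloor$ (respectively $m(m+2)/4$ when $m$ is even), then rearrange $2n-1 \le \#S$ into the stated inequality exactly as in the proof of Theorem~\ref{thm:main}.

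For (4) with $m=3$, I would first observe that the three vertices of $\Delta(2, 2d+1)$ are automatically $2d$-separated (with pairwise distance $4d+2$). Any fourth point $s = (s_1,s_2,s_3)$ must satisfy $2(2d+1-s_i) > 2d$ for each $i$, i.e.\ $s_i \le d$. A short case analysis on the positive and negative parts of $s-s'$ (using that $m=3$ forces one of them to have support of size $1$) then shows any two such extra points are at $\ell_1$-distance $\le 2d$, so $\#S \le 4$ and $n = 2$. For (2)--(3) with $d=1$, I would split a candidate $S \subset \Delta(m-1, 3)$ into the $m$ vertices $3 e_i$ (pairwise distance $6$) together with a maximum packing of ``triples'' $e_i + e_j + e_k$ whose index $3$-subsets pairwise meet in at most one element; the extremal size of the latter is $\floor{\tfrac{m}{3}\floor{\tfrac{m-1}{2}}}$ by a standard double-count, and the familiar divisibility obstruction to near-Steiner triple systems at $m \equiv 5 \pmod{6}$ accounts for the split between cases (2) and (3). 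For (6), each tabulated entry is verified by an ad hoc construction; the bold entries exceed the uniform bound from (5) and require an explicit $2d$-separated set of the indicated cardinality in $\Delta(m-1, 2d+1)$, exhibited individually.

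The main obstacle is the combinatorial content for intermediate $d$: outside the extreme regimes $d=0$ and $d \ge m-1$, packing $2d$-separated sets in the discrete simplex is genuinely nontrivial, the $d=1$ formula depends on a Steiner-type packing bound, and the bold entries in (6) require constructions not uniform in $(m,d)$. The matrix-theoretic content, by contrast, is routine once the sets are produced; essentially all of the real work is combinatorial and lives in \Cref{sec:combinatorics}.
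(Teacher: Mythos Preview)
Your strategy matches the paper's: reduce everything to \Cref{thm:counterexample} and supply the appropriate $2d$-separated set $S$ for each regime. For (1), (4), and (5) your arguments are essentially those of the paper (\Cref{cor:d0}, \Cref{prop:smallR}, \Cref{thm:quadraticS} respectively). For (2)--(3) the paper simply quotes the coding-theory value $A(m,4,3)$ from \Cref{thm:binary} rather than reproving it; your Steiner-packing sketch is the right content, though you should explicitly invoke \Cref{lem:corner} and \Cref{lem:boundedcoordinate} to justify that, after assuming the corners lie in $S$, any further point must have all coordinates $\le d=1$, which is what rules out the edge points $2e_i+e_j$ and leaves only weight-$3$ binary vectors.

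The real gap is in (6). The statement asserts that each tabulated entry is the size of the \emph{largest} non-commutator that \Cref{thm:counterexample} can produce for that $(m,d)$, so you need both a construction and a proof that no larger $2d$-separated $S\subset\Delta(m-1,2d+1)$ exists. Your ``ad hoc construction'' addresses only the lower bound. The paper handles both directions at once by computing the independence number of the graph $G(m,d)$ in Magma (\Cref{prop:dataSize}); there is no by-hand optimality argument, and for the larger entries (e.g.\ $m=8$, $d=2$, where $\#S=24$ and $n=12$) it is not clear one is available.
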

\begin{proof}
  We will use \Cref{thm:counterexample} to construct the trace $0$ non-commutator. 

  For (1), the proof is in \Cref{cor:d0}. Note that the assumptions of the \Cref{thm:counterexample} can be simplified to the assumptions in \Cref{cor:d0} (see the proof of \Cref{cor:d0} for more details on the assumptions). 

  For (2) to (6) we will first describe the set $S$ used in  \Cref{thm:counterexample}.

  For (2) and (3) we will use \Cref{thm:binary}. Given $d=1$ and $m\ge 0$, take $S$ to be the maximum set of binary vectors of length $m$ that are Hamming distance at least $4$ apart and constant weight 3.   

  For (4), \Cref{prop:smallR} describes how to obtain the set $S$ for $m=3$ and for any $d$.
  For (5), \Cref{thm:quadraticS} describes how to obtain the set $S$ for any $m\ge 1$ and $d\ge m-1$.
  For (6) \Cref{prop:dataSize} describes the largest $S$ one can obtain for the listed $m$ and $d$.

  Once the $S$ needed for \Cref{thm:counterexample} is obtained, we can construct an $n\times n$ trace $0$ non-commutator if $2n-1\le \# S$. Note that for (5), we have $\# S = \tfrac{m(m+2)}{4}$ when $m$ is even. This inequality can be rearranged to $2\le n\le \frac{m^2+2m+4}{8}$, but this is equivalent to $2\le n\le \frac{m^2+2m+5}{8}$ since $m$ is even. Hence we obtain the inequality stated in (5) for both even and odd $m$.
\end{proof}

\begin{remark}
  The $n$'s given in the table in \Cref{thm:largestn} is the largest size for which we can construct a trace $0$ non-commutator using \Cref{thm:counterexample}. On the other hand, (5) in the list can be improved if we can construct a bigger $S$ required for \Cref{thm:counterexample}. For example, if $m=4$, (5) gives $2\le n\le 3$, but we see that in the table in (6), we can take $n=4$ if $d\ge 4$.
\end{remark}

While the upper bound on $n$ in \Cref{thm:main} could be improved, the size of the non-commutator we can construct using \Cref{thm:counterexample} is bounded for a fixed ring $R$ and ideal $I\subset R$, as we now show. 

\begin{prop}\label{prop:upperbound}
  Let $R$ be a commutative ring and let $I\subset R$ be an ideal such that $I/I^2$ is a free $R/I$-module with rank $m\ge 3$. Then the size $n$ of a trace $0$ non-commutator one can construct using \Cref{thm:counterexample} is bounded above by $2^{2m-3}$.
\end{prop}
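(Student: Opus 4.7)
The plan is to reduce this to a purely combinatorial statement about the set $S$ used in \Cref{thm:counterexample}. Recall that the construction produces an $n \times n$ non-commutator precisely when a $2d$-separated set $S \subset \Delta(m-1, 2d+1)$ of size $2n-1$ exists (for some admissible $d \geq 0$). Under the hypotheses of the proposition, $d = 0$ is automatically admissible, but in general one would want to allow arbitrary $d$; so the task reduces to proving
\[ |S| \leq 2^{2m-2} \quad \text{for every $d \geq 0$ and every $2d$-separated set $S \subset \Delta(m-1, 2d+1)$,}\]
which immediately yields $n = (|S|+1)/2 \leq 2^{2m-3}$.

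The first step is to reformulate the separation condition. Since $|s|_1 = |s'|_1 = 2d+1$, the identity $|s-s'|_1 + 2\sum_i \min(s_i,s'_i) = 4d+2$ shows that $|s-s'|_1 \geq 2d+2$ is equivalent to $\sum_i \min(s_i,s'_i) \leq d$. Rewriting once more, for any distinct $s,s' \in S$ and the set $B = \{i : s_i > s'_i\}$, one has $\sum_{i \in B^c} s_i = \sum_{i \in B^c}\min(s_i,s'_i) \leq d$, forcing $\sum_{i \in B} s_i \geq d+1$, and dually $\sum_{i \in B} s'_i \leq d$. So each pair in $S$ admits a distinguishing subset on which one vector is "heavy" ($\geq d+1$) and the other is "light" ($\leq d$).

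The second step is to package this into an injective invariant. For each $s \in S$ I would define the threshold upset
\[ \mathcal{H}(s) := \bigl\{ B \subseteq [m] : \textstyle\sum_{i \in B} s_i \geq d+1 \bigr\} \subseteq 2^{[m]}.\]
The observation from the previous paragraph shows that for $s \neq s'$ in $S$, the set $B = \{i : s_i > s'_i\}$ lies in $\mathcal{H}(s) \setminus \mathcal{H}(s')$, so $\mathcal{H}$ is injective on $S$.

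The hard part, and the main obstacle, is bounding the number of achievable upsets $\mathcal{H}(s)$ by $2^{2m-2}$. The total number of upsets of $2^{[m]}$ (the Dedekind number) vastly exceeds this, so one must use the linear-threshold structure of $\mathcal{H}(s)$. The cleanest route I see is to partition $S$ according to the parity vector $e(s) := (s_i \bmod 2)_{i=1}^m$, which has odd weight (so $2^{m-1}$ possible values). For each fixed parity class, writing $s = 2a+e$, the $a$'s satisfy $|a|_1 = (2d+1-|e|)/2$ and pairwise $\sum_i \min(a_i, a'_i) \leq (d-|e|)/2$; the induced separation ratio is strictly smaller than in the original problem, which by an inductive argument (or an independent thresholding of $\mathcal{H}(a)$) yields at most $2^{m-1}$ elements in each parity class. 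Combining the two factors gives $|S| \leq 2^{m-1} \cdot 2^{m-1} = 2^{2m-2}$, hence $n \leq 2^{2m-3}$. Making this partition–and–recurse argument go through uniformly in $d$, without subtly worsening ratios between recursive steps, is the delicate part of the proof.
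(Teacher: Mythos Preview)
Your reduction to the combinatorial bound $|S| \le 2^{2m-2} = 4^{m-1}$ for any $2d$-separated $S \subset \Delta(m-1,2d+1)$ is correct, and the reformulation $|s-s'|_1 > 2d \iff \sum_i \min(s_i,s'_i) \le d$ is valid. The gap is in your final step. The parity-class recursion does not close: within a fixed parity class $e$, writing $s = 2a + e$ gives $|a|_1 = (2d+1-|e|)/2$ and overlap condition $\sum_i \min(a_i,a'_i) \le (d-|e|)/2$, but for $|e| \ge 3$ this subproblem is \emph{not} of the form ``total $= 2d'+1$, overlap $\le d'$'' for any integer $d'$, so there is no inductive hypothesis to invoke. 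Even granting some monotonicity, a recursion of the shape $f(d) \le 2^{m-1} f(d')$ with $d' \approx d/2$ and base case $f(0)=m$ produces a bound that grows with $d$, not the uniform $4^{m-1}$ you need. The claim ``at most $2^{m-1}$ elements per parity class'' is simply asserted, and your own closing sentence correctly flags this as unfinished. The threshold-upset map $\mathcal{H}$ is a nice observation but is not used in what follows, and bounding its image by $4^{m-1}$ is no easier than the original problem.

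The paper's argument is entirely different and much shorter: rescale $S$ by $\tfrac{1}{2d+1}$ to obtain a $1$-separated subset of the real simplex $\Delta_{m-1} = \{v \in \R_{\ge 0}^m : \sum_i v_i = 1\}$, and apply a volume packing argument. The open $\ell_1$-balls of radius $\tfrac{1}{2}$ about the points of $S$ are pairwise disjoint, and each meets $\Delta_{m-1}$ in a region of $(m-1)$-volume at least $(\tfrac{1}{4})^{m-1}\vol(\Delta_{m-1})$ (the minimum occurring at a corner, where the intersection is a scaled copy of $\Delta_{m-1}$ with side ratio $\tfrac{1}{4}$). Comparing volumes gives $|S| \le 4^{m-1}$, hence $2n-1 \le 4^{m-1}$ and $n \le 2^{2m-3}$. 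This sidesteps all discrete combinatorics.
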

\begin{proof}
  If $m$ is finite, then the size of the set $S=\set{s_1,\dots,s_{2n-1}}$ required in \Cref{thm:counterexample} is bounded above by $4^{m-1}$ by \Cref{cor:upperbound}. So we  have $2n-1\le 4^{m-1}$ which implies $n\le 2^{2m-3}$. Hence the largest trace $0$ non-commutator one can construct using \Cref{thm:counterexample} is bounded above by $2^{2m-3}$.
\end{proof}

\begin{remark}
\Cref{prop:upperbound} gives an upperbound for the size of the non-commutator one can construct using \Cref{thm:counterexample} for a fixed $R$ and $I$. However, if we only fix $R$, then the size of the non-commutator can be arbitrary large in general. We will later construct a ring with such a property in \Cref{thm:unbounded}.
\end{remark}

\section{Combinatorics}\label{sec:combinatorics}
We will now discuss  the set $S$ required in \Cref{thm:counterexample}. Namely, we would like to answer the following questions.
\begin{question}
  \label{q:mainWithd}
  Given $m\ge 1$ and $d\ge 0$, how large can a set $S\subset \Z^m_{\ge 0}$ be if it is $2d$-separated and contained in $\Delta(m-1,2d+1)$?
\end{question}
\begin{question}
  \label{q:main}
  Given $m\ge 1$, how large can a set $S$ be if it is $2d$-separated and contained in $\Delta(m-1,2d+1)\subset \Z^m_{\ge 0}$ for some $d\ge 0$?
\end{question}
An answer for \Cref{q:mainWithd} will give the size of the largest matrix we can construct using \Cref{thm:counterexample} for a fixed $m$ and $d$. While an answer to \Cref{q:main} will be useful in the situation of \Cref{thm:main} where we are allowed to take arbitrary large $d$.

To answer the above questions, we will first use the following lemma to simplify the problem.
\begin{lemma}
  \label{lem:corner}
  Let $m\ge 1$ and $d\ge 0$ be integers, and suppose that a set $S$ is $2d$-separated and contained in $\Delta(m-1,2d+1)$. Then there exists a set $S'$ such that $S'$ is $2d$-separated, contained in $\Delta(m-1,2d+1)$, $\#S'\ge \#S$ and $v_1:=(2d+1,0\dots,0),\dots,v_m:=(0,\dots,0,2d+1)\in S'$.
\end{lemma}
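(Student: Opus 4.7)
The plan is to build $S'$ from $S$ by iteratively swapping in the corners $v_1,\dots,v_m$ one at a time, in such a way that the cardinality never decreases. The entire argument reduces to an estimate on the ``cap''
\[ A_i := \{s \in \Delta(m-1,2d+1) : s_i \ge d+1\}. \]
I would first record the easy computation $|v_i - s|_1 = (2d+1 - s_i) + \sum_{j\ne i} s_j = 2(2d+1-s_i)$ for $s$ in the simplex, so that $v_i$ is at distance at least $2d+1$ from $s$ exactly when $s \notin A_i$. In particular, any two distinct corners are at distance $4d+2$, and $v_j \notin A_i$ whenever $j \ne i$ (since $(v_j)_i = 0$).

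The heart of the argument, which I expect to be the main obstacle, is to show that $|A_i \cap S| \le 1$ for each $i$. Given two such points $s, s'$, I would take $k := s_i - s'_i \ge 0$ (WLOG), set $a_j := s_j - s'_j$ for $j \ne i$, and use the identity $\sum_{j \ne i} a_j = -k$ coming from $\sum_j(s_j - s'_j) = 0$. Splitting the $a_j$ into positive and negative parts one obtains $\sum_{j \ne i}|a_j| = 2P + k$ with $P := \sum_{a_j > 0} a_j$, and bounds $P \le \sum_{j \ne i} s_j = 2d+1 - s_i \le d - k$, the last step using $s'_i = s_i - k \ge d+1$. Combining these gives
\[ |s - s'|_1 = k + (2P + k) \le 2d, \]
so $2d$-separation of $S$ forces $s = s'$.

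With this in hand I would construct $S'$ iteratively: starting from $S^{(0)} := S$, at step $i$ (for $i = 1,\dots,m$) leave $S^{(i-1)}$ alone if $v_i$ is already present, and otherwise set $S^{(i)} := (S^{(i-1)} \setminus A_i) \cup \{v_i\}$. The inductive hypothesis that $S^{(i-1)}$ is $2d$-separated lets me reapply the key estimate to $S^{(i-1)}$ and conclude $|A_i \cap S^{(i-1)}| \le 1$, so the cardinality is non-decreasing at every step. The modified set remains $2d$-separated because every retained point $s$ satisfies $s_i \le d$ (either $s$ is an old corner $v_j$ with $j < i$, or $s \in S^{(i-1)} \setminus A_i$), giving $|v_i - s|_1 \ge 2d+2$; it stays in $\Delta(m-1,2d+1)$; and corners added in earlier rounds survive since $v_j \notin A_i$ for $j < i$. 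After $m$ steps, $S' := S^{(m)}$ has all the required properties.
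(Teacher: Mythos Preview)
Your proposal is correct and follows essentially the same route as the paper's proof: iteratively insert each corner $v_i$, showing via the computation $|v_i-s|_1 = 2(2d+1-s_i)$ that the set of points too close to $v_i$ consists of those with $s_i\ge d+1$, and then proving that any two such points are within $2d$ of each other, so at most one needs to be removed. The only cosmetic difference is in the key inequality: the paper bounds $\sum_{j\ne i}|s_j-t_j|$ crudely by $\sum_{j\ne i}(s_j+t_j)$, whereas you use the positive/negative-part decomposition $\sum_{j\ne i}|a_j|=2P+k$ together with $P\le 2d+1-s_i\le d-k$; both arrive at $|s-s'|_1\le 2d$.
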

\begin{proof}
  For each $i$, we will either add $v_i$ to $S$ or replace one of the point in $S$ with $v_i$ to construct $S'$. So let $1\le i\le m$. If $v_i\in S$, then we do not need to modify $S$. Assume now that $v_i\notin S$. If $v_i$ is $2d$-separated from all the points in $S$, then we may take $S' = S\cup \set{v_i}$. Otherwise there is a point $s=(s_j)\in S$ such that $|s-v_i|_1\le 2d$. We will show that such $s$ is unique, and so we may replace $s$ with $v_i$ to obtain $S'$.
So suppose we have $t=(t_j)\in S$ with $|t-v_i|_1\le 2d$, and without loss of generality, assume $s_i\ge t_i$. Now
\[2d \ge |s-v_i|_1 = \sum_{j\ne i} s_j + 2d+1-s_i=4d+2 - 2s_i,\]
so $s_i\ge d+1$, and similarly, $t_i\ge d+1$. Hence
  \begin{align*}
    |s-t|_1 &=  s_i-t_i + \sum_{j\ne i}|s_j-t_j| \\
            &\le s_i-t_i + \sum_{j\ne i}s_j + \sum_{j\ne i}t_j\\
            &\le s_i-t_i + (2d+1-s_i)+(2d+1-t_i) \\
            &= 4d +2-2t_i\\
            &\le 2d.
  \end{align*}
  Since $S$ is $2d$-separated, this implies that $s=t$. Hence there is a unique point in $s\in S$ such that $|s-v_i|_1\le 2d$, and so if we take $S' := (S\setminus\set{s}) \cup \set{v_i}$, then $S'$ will be $2d$-separated.
\end{proof}

We will call the $v_i$'s the \emph{corner points} since they are on the corners of the simplex $\Delta(m-1,2d+1)$. We now consider the necessary and sufficient conditions for a point $s\in\Delta(m-1,2d+1)$ to be $2d$-separated from all the corner points.
\begin{lemma}\label{lem:boundedcoordinate}
  Let $d\ge 0$ and $m\ge 1$ be integers and $s=(s_j)\in\Delta(m-1,2d+1)\subset\Z_{\ge0}^m$. Then $s$ is $2d$-separated from all the corner points if and only if $s_j\le d$ for all $j=1,\dots,m$.
\end{lemma}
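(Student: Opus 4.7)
The plan is to prove this by a direct calculation of $|s - v_i|_1$ for each corner point $v_i$ and then solve the resulting inequality. Since $s \in \Delta(m-1, 2d+1)$, all coordinates $s_j$ are nonnegative and $\sum_j s_j = 2d+1$, which immediately gives $0 \le s_i \le 2d+1$ for each $i$.

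The key computation is as follows. For any $i \in \{1,\dots,m\}$, the corner point $v_i$ has $i$-th coordinate $2d+1$ and all other coordinates $0$. Using $s_i \le 2d+1$ and $s_j \ge 0$ for $j\ne i$, we get
\[
|s - v_i|_1 \;=\; (2d+1 - s_i) + \sum_{j \ne i} s_j \;=\; (2d+1 - s_i) + (2d+1 - s_i) \;=\; 4d + 2 - 2s_i,
\]
where the second equality uses $\sum_{j\ne i} s_j = 2d+1 - s_i$.

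From this formula, $|s - v_i|_1 > 2d$ is equivalent to $4d+2 - 2s_i > 2d$, i.e. $s_i < d+1$, i.e. $s_i \le d$ (using that $s_i \in \Z$). Therefore $s$ is $2d$-separated from every corner point $v_i$ if and only if $s_i \le d$ for every $i = 1,\dots,m$, which is the claimed equivalence.

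There is no real obstacle here; the statement is essentially a one-line computation once one unwinds $|s - v_i|_1$ using the defining identity $\sum_j s_j = 2d+1$. The only thing to be mindful of is replacing $|s_i - (2d+1)|$ by $2d+1 - s_i$ without absolute value, which is justified by $s_i \le 2d+1$.
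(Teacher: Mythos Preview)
Your proof is correct and essentially identical to the paper's own argument: both compute $|s-v_i|_1 = 4d+2-2s_i$ from the constraint $\sum_j s_j = 2d+1$ and then reduce $|s-v_i|_1>2d$ to $s_i\le d$ using integrality. Your explicit justification that $|s_i-(2d+1)|=2d+1-s_i$ is a nice touch that the paper leaves implicit.
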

\begin{proof}
  First, note that if $s=(s_j)\in\Delta(m-1,2d+1)$, then for all $i=1,\dots,m$,
  \[|s-v_i|_1 = 2d+1-s_i + \sum_{j\ne i}s_j = 2d+1-2s_i+ \sum_{j=1}^ms_j = 2(d-s_i) + 2d+2.\]
  So $s$ is $2d$-separated from $v_i$ if and only if $2(d-s_i)+2d+2 > 2d$, which can be rearranged to
  \[d+1 > s_i. \]
  Since $d$ and $s_i$ are integers, this inequality is equivalent to $d\ge s_i$.
  Hence $s$ is $2d$-separated from all the corner points if and only if $d\ge s_i$ for all $i$.
\end{proof}

If we are constructing a maximum $2d$-separated set $S\subset\Delta(m-1,2d+1)$, then by \Cref{lem:corner}, we can assume that the set $S$ contains all the corner points. Now consider the rest of the points $T:=S\setminus\set{v_1,\dots,v_m}$. For $S$ to be $2d$-separated, $T$ also needs to be $2d$-separated, and in addition, by \Cref{lem:boundedcoordinate}, for all $t=(t_j)\in T$, we must have $t_j\le d$ for all $j$. So in other words, \Cref{q:mainWithd} about the maximum size of $S$ is equivalent to the following question. Also note that the sizes of the largest $S$ in \Cref{q:mainWithd} and the largest $T$ in the following question are related by $\# S=m+\#T$.
\begin{question}\label{q:t}
  Given $m$ and $d$, how large can a set $T$ be if it is $2d$-separated, contained in $\Delta(m-1,2d+1)$, and for all $t=(t_i)\in T$, $t_i\le d$ for all $i=1,\dots,m$.
\end{question}

The $d=1$ case in \Cref{q:t} can be reformulated in terms of binary vectors as follows. Recall that a binary vector of length $m$ is any vector in $\set{0,1}^m$. The weight of such a vector is the sum of all the entries, and for two binary vectors $v,v'$, the Hamming distance between $v$ and $v'$ is $|v-v'|_1$. Now if $d=1$, then every entry in $t\in T$ is either $0$ or $1$, so we can consider $t$ as a binary vector of length $m$. Moreover, since $t\in\Delta(m-1,3)$, the weight of $t$ is 3. In other words \Cref{q:t} for the $d=1$ case is asking for a largest set of binary vectors of length $m$ with weight $3$ that are pairwise Hamming distance being more than $2$ apart. 

The following theorem answers how large such a set of binary vectors can be.  We state the theorem as in \cite{BSSS90}. For the proof, see Theorem 3 in \cite{Schoenheim66}. Note that the Hamming distance between two binary vectors of the same weight must be even, so Hamming distance being more than $2$ apart is equivalent to Hamming distance being at least $4$ apart.
\begin{theorem}[{\cite[Theorem 4]{BSSS90}}]\label{thm:binary}
  Let $A(m,d,w)$ be the maximal possible number of binary vectors of length $m$, Hamming distance at least $d$ apart and constant weight $w$. Then
  \[A(m,4,3) =
    \begin{cases}
      \floor{\frac{m}{3}\floor{\frac{m-1}{2}}}, & \text{if }m\not\equiv 5\pmod{6}\\
      \floor{\frac{m}{3}\floor{\frac{m-1}{2}}} - 1, & \text{if }m\equiv 5\pmod{6}
    \end{cases}
\]
\end{theorem}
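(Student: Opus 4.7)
The plan is to recast the statement as a packing question for triples. A length-$m$ weight-$3$ binary vector is the indicator of a $3$-element subset of $[m]:=\{1,\dots,m\}$, and for two such subsets $A,B$ one has $|A\triangle B|=2(3-|A\cap B|)$. Hence Hamming distance at least $4$ is equivalent to $|A\cap B|\le 1$; that is, no pair $\{i,j\}\subset[m]$ lies in more than one of the chosen triples. So $A(m,4,3)$ is the maximum size of a \emph{partial Steiner triple system} on $m$ points, and I would carry out the entire argument in this language.

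For the upper bound, fix $i\in[m]$ and let $c_i$ denote the number of chosen triples containing $i$. Each such triple uses $i$ together with two other elements of $[m]$, and the no-shared-pair condition forces the resulting pairs $\{i,j\}$ to be distinct across triples. Therefore $2c_i\le m-1$, so $c_i\le \floor{(m-1)/2}$. Summing $\sum_i c_i=3N$ over all $i$ and dividing by $3$ yields $N\le \floor{\tfrac{m}{3}\floor{(m-1)/2}}$, which is exactly the claim when $m\not\equiv 5\pmod 6$.

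The refinement for $m\equiv 5\pmod 6$ comes from a parity count. Write $m=6k+5$ and suppose $N=N_0:=6k^2+9k+3$ triples could be found. By the intersection condition each pair of $[m]$ is used by at most one triple, so the number of unused pairs is $\binom{m}{2}-3N_0=1$. On the other hand, at every vertex $i$ the number of pair-slots through $i$ that remain unused equals $(m-1)-2c_i$, which is even because $m-1=6k+4$ is even. Summing over $i$ gives twice the total number of unused pairs, which is therefore even; this contradicts the count of $1$ unused pair. Hence $N\le N_0-1$ in this residue class.

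For the matching lower bounds I would appeal to classical design theory. When $m\equiv 1,3\pmod 6$, Kirkman's theorem produces a Steiner triple system on $[m]$, realising $m(m-1)/6$ triples. For the other residues, the corresponding maximum packings are produced by Hanani's constructions: embed $[m]$ into a Steiner system on slightly more points, restrict to the triples entirely inside $[m]$, and adjoin a small number of extra triples that absorb the parity obstruction. The case $m\equiv 5\pmod 6$ requires a near-packing whose leave is a single triangle, and producing this leave explicitly is the main technical obstacle; it is the content of Sch\"{o}nheim's original construction in \cite{Schoenheim66}.
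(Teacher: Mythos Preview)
The paper does not prove this theorem at all: it is quoted from \cite{BSSS90} with the remark ``For the proof, see Theorem~3 in \cite{Schoenheim66}.'' So there is no in-paper argument to compare against; your write-up is strictly more than what the paper supplies, and the approach you take (reinterpreting $A(m,4,3)$ as the maximum size of a partial Steiner triple system, then using the Johnson/Sch\"onheim counting bound together with a parity refinement and classical packing constructions) is exactly the classical route that \cite{Schoenheim66} follows.

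Two small corrections to your sketch. First, in the $m\equiv 5\pmod 6$ upper bound, the sentence ``Summing over $i$ gives twice the total number of unused pairs, which is therefore even; this contradicts the count of $1$ unused pair'' does not yield a contradiction: $2\cdot 1=2$ is even. The contradiction is local, not global. Since $m-1$ is even, each leave-degree $(m-1)-2c_i$ is even, so the leave graph has all vertices of even degree; but a single edge has two odd-degree endpoints. That is the step you want. Second, in your construction remark the leave for the optimal packing when $m\equiv 5\pmod 6$ is not a triangle: the edge count is $\binom{m}{2}-3(N_0-1)=4$, and the leave is a $4$-cycle (consistently with the even-degree constraint). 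This does not affect the logic, since you correctly defer the construction to \cite{Schoenheim66}, but the description should be fixed.
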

So for all $m\ge 1$ and $d=1$, the largest $T$ in \Cref{q:t} has size $A(m,4,3)$, while the largest $S$ in \Cref{q:mainWithd} has size $A(m,4,3)+m$, answering \Cref{q:t} and \Cref{q:mainWithd} respectively.

For small $m$'s we can answer \Cref{q:main} fully.
\begin{prop}\label{prop:smallR}
  Given $m=1,2$ or $3$, the largest set $S$ that is $2d$-separated and contained in $\Delta(m-1,2d+1)\subset \Z_{\ge 0}^m$ has size $\# S=1,2$ and $4$ respectively.
\end{prop}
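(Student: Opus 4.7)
The plan is to handle the three cases separately, using the reductions already established in Lemmas \ref{lem:corner} and \ref{lem:boundedcoordinate}. For $m=1$, the simplex $\Delta(0,2d+1)=\{(2d+1)\}$ is a single point for every $d$, so $\#S\le 1$ is immediate. For $m=2$, I would parametrize points of $\Delta(1,2d+1)$ by their first coordinate $a\in\{0,1,\dots,2d+1\}$, compute that the $\ell^1$-distance between $(a,2d+1-a)$ and $(b,2d+1-b)$ equals $2|a-b|$, and note that $2d$-separation then forces $|a-b|\ge d+1$. Among integers in $[0,2d+1]$, the only way to have more than one element with pairwise differences $\ge d+1$ is the pair $\{0,2d+1\}$, giving $\#S\le 2$, attained by the two corner points.

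The main work is the case $m=3$. I would first invoke Lemma \ref{lem:corner} to assume $S$ contains the three corner points $v_1,v_2,v_3$ and study $T:=S\setminus\{v_1,v_2,v_3\}$. By Lemma \ref{lem:boundedcoordinate}, each $t=(t_1,t_2,t_3)\in T$ satisfies $t_i\le d$; together with $t_1+t_2+t_3=2d+1$ this forces $1\le t_i\le d$ when $d\ge 1$, and makes $T$ empty when $d=0$ (yielding only $\#S=3$ in that edge case). The goal then reduces to showing $\#T\le 1$ for $d\ge 1$.

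For that, suppose $t,t'\in T$ are distinct and partition $\{1,2,3\}$ into $I^+=\{i:t_i>t_i'\}$, $I^-=\{i:t_i<t_i'\}$, $I^0=\{i:t_i=t_i'\}$. Since both coordinate vectors sum to $2d+1$, the positive and negative parts balance, so $|t-t'|_1=2\sum_{i\in I^-}(t_i'-t_i)$. Both $I^+$ and $I^-$ are nonempty and their sizes sum to at most $3$, so one of them has size exactly one; without loss of generality I take $I^-=\{j\}$. Then $|t-t'|_1=2(t_j'-t_j)\le 2(d-1)<2d$, contradicting the $2d$-separation of $T$. Hence $\#T\le 1$ and $\#S\le 4$, and equality is realized at $d=1$ with $S=\{(3,0,0),(0,3,0),(0,0,3),(1,1,1)\}$, each pair of points at distance exactly $4>2$. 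The only genuinely nontrivial step is the balancing argument just sketched; everything else is either a direct calculation on the simplex or an immediate consequence of the two preparatory lemmas.
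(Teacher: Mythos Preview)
Your proof is correct and follows essentially the same approach as the paper: both reduce the $m=3$ case via Lemmas~\ref{lem:corner} and~\ref{lem:boundedcoordinate} to showing $\#T\le 1$, and both do so by observing that the $\ell^1$-distance between two points of $T$ equals twice a single coordinate difference, hence is at most $2d$. Your treatment of $m=1,2$ is slightly more direct (computing distances explicitly rather than invoking the corner lemma), and you add an explicit witness for $\#S=4$ and note the $d=0$ edge case, but these are minor expository differences rather than a different method.
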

\begin{proof}
  By \Cref{lem:corner}, we may assume that the corner points are already in $S$. Then for $m=1$ and $m=2$, there are no points in $\Delta(m-1,2d+1)$ that are $2d$-separated from the corner points. So $\# S=m$ is the largest possible set.

  For $m=3$, suppose there exist $2$ distinct points $s,s'\in\Delta(2,2d+1)$ that are $2d$-separated from the corner points. If $s=(a,b,c)$ and $s'=(a',b',c')$, then we have $0\le a,b,c,a',b',c' \le d$ by \Cref{lem:boundedcoordinate}. Since $|s|_1=2d+1=|s'|_1$, we may assume without loss of generality that $a\ge a'$, $b\le b'$ and $c\le c'$. Then
  \begin{align*}
    |s-s'|_1 &= |a-a'| + |b-b'| + |c-c'| \\
             &= a-a' + b'-b + c'-c\\
             &= 2d+1 - 2a' -(2d+1) + 2a'\\
             &= 2(a-a')\\
             &\le 2d
  \end{align*}
  so $s$ and $s'$ cannot be $2d$-separated. Hence $S$ can only contain one more point apart from the corner points, and hence $\# S=4$ is the largest.
\end{proof}

For other small $m$'s, we can use a computer program to determine explicitly how large $S$ can be for small $d$'s. But we will first convert that question into a graph theory problem. Given $m\ge 1$ and $d\ge 0$ integers, let $G(m,d):=(V,E)$ be the graph with a vertex set $V$ and an edge set $E$ defined as follows: The vertices are points in  $\Delta(m-1, 2d+1)$ with entries at most $d$, and an edge exists between distinct $s, s' \in V$ if 
  \[ |s-s'|_1=\sum_{i=1}^{m}|s_i-s_i'| \le 2d.\]
  Recall that for a graph, an \emph{independent set} is a set of vertices with no edges between them, and an independent set with the largest size is called a \emph{maximum independent set}. The cardinality of such a maximum independent set is called the \emph{independence number} of the graph. The set $T$ in \Cref{q:t} is an independent set of $G(m,d)$, so we would like to know the independence number of $G(m,d)$ for a given $m\ge 1$ and $d\ge 0$.
\begin{prop}\label{prop:dataSize}
  For the following $m$ and $d$, the size of the largest $2d$-separated set ${S\subset \Z_{\ge0}^m}$ contained in $\Delta(m-1,2d+1)$ is given in the table below.
\begin{table}[H]
  \centering
\begin{tabular}{|>{\bfseries}c| *{12}{c}|}
      \hline
    \diagbox[width=2.5em]{m}{\kern-1em d}& \textbf{1} & \textbf{2} & \textbf{3} & \textbf{4} & \textbf{5} & \textbf{6} & \textbf{7} & \textbf{8} & \textbf{9} & \textbf{10} & \textbf{11} &\textbf{12} \\ \hline
                    4& 5 & 6 & 6 & 7 & 7 & 7 & 7 & 7 & 8 &  8 & 8 & 8\\
                    5& 7 & 10& 10& 10& 11& 11& 12& 12& 12& 13 &   & \\
                    6& 10& 12& 14& 15&   &   &   &   &   &    &   &\\
                    7& 14& 18&   &   &   &   &   &   &   &    &   &\\ 
                    8& 16& 24&   &   &   &   &   &   &   &    &   &\\\hline
    \end{tabular}
   \caption{Size of the largest $2d$-separated set ${S\subset \Z_{\ge0}^m}$ contained in $\Delta(m-1,2d+1)$}
 \end{table}
\end{prop}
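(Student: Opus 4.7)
The plan is to reduce the problem to computing the independence number of the graph $G(m,d)$ defined in the preceding paragraph, and then handle the case $d=1$ by citation while treating the remaining entries as a finite computer-assisted verification.

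First, by \Cref{lem:corner}, any $2d$-separated set $S \subset \Delta(m-1, 2d+1)$ of maximal cardinality can be assumed to contain all $m$ corner points $v_1, \dots, v_m$. Setting $T := S \setminus \{v_1, \dots, v_m\}$, \Cref{lem:boundedcoordinate} forces every coordinate of every $t \in T$ to lie in $\{0, 1, \dots, d\}$, and $T$ is still $2d$-separated. Conversely, any $2d$-separated subset of $\Delta(m-1,2d+1)$ with coordinates bounded by $d$ can be adjoined to the corner points to form a $2d$-separated $S$. Hence the desired maximum size equals $m + \alpha(G(m,d))$, where $\alpha$ denotes the independence number and $G(m,d)$ is the graph defined just before this proposition.

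For the column $d=1$, a point of $T \subset \Delta(m-1,3)$ with each coordinate at most $1$ is exactly a binary vector of length $m$ and weight $3$, and the $2$-separated condition $|t-t'|_1 > 2$ is equivalent (since these vectors have equal weight, their $\ell^1$-distance is even) to Hamming distance at least $4$. Thus $\alpha(G(m,1)) = A(m,4,3)$, and the values $5,7,10,14,16$ in the first column follow directly from \Cref{thm:binary} (taking $A(4,4,3)=1$, $A(5,4,3)=2$, $A(6,4,3)=4$, $A(7,4,3)=7$, $A(8,4,3)=8$, then adding $m$).

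For the remaining entries ($d \ge 2$), the plan is to compute $\alpha(G(m,d))$ directly. The vertex set of $G(m,d)$ is finite and explicitly enumerable: it consists of weak compositions of $2d+1$ into $m$ parts each at most $d$, a set whose size one can bound by the coefficient of $x^{2d+1}$ in $(1+x+\cdots+x^d)^m$. For the values of $m$ and $d$ appearing in the table, these vertex counts remain in a range where the maximum independent set problem is tractable; I would construct each graph explicitly and run a branch-and-bound maximum independent set solver (or, equivalently, a maximum clique solver on the complement graph), recording the output and then adding $m$. The main obstacle is purely computational: the graphs grow quickly as $m$ and $d$ increase, which is precisely why the table stops where it does; no conceptual new input beyond the reduction above is needed. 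A small remark in the proof should note that the code used for this enumeration is available (or that the answers have been independently reproduced), together with spot-checks such as verifying that the $d=1$ column agrees with \Cref{thm:binary}.
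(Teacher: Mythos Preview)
Your proposal is correct and follows essentially the same approach as the paper: reduce via \Cref{lem:corner} and \Cref{lem:boundedcoordinate} to computing the independence number of $G(m,d)$, then add $m$. The only cosmetic difference is that the paper computes all entries uniformly using Magma's \texttt{IndependenceNumber} function rather than singling out the $d=1$ column for a separate citation to \Cref{thm:binary}; your treatment of that column is a nice spot-check but not a substantive departure.
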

\begin{proof}
  As stated before the proposition, we would like to know the independence number of $G(m,d)$ to answer \Cref{q:t}, and so we used the function \texttt{IndependenceNumber} in Magma\cite{Magma} to compute it. 
As noted in the paragraph before \Cref{q:t}, the size of $T$ in \Cref{q:t} is related to the size of $S$ in \Cref{q:mainWithd} by $\# S = \# T + m$. So we added $m$ to the output from \texttt{IndependenceNumber} to obtain the size of the largest set $S\subset \Delta(m-1,2d+1)$ that is $2d$-separated, thus answering \Cref{q:mainWithd} completely for these values of $m$ and $d$.

The missing entries in the table are due to the computations taking too long for those parameters.
\end{proof}
\begin{remark}
  The independence number of a graph  is a well known invariant and has been well studied. In particular, there are inequalities which involves the independence number and other invariants of a graph. See \cite[Chapter 3]{Willis11} for a list of such inequalities. Unfortunately the inequalities listed were either not strong enough, or had an invariant that was difficult to compute, and we could not obtain any useful bound for our graphs. 
\end{remark}

For any $m\ge 1$, we have the following construction of a set $S$ suitable for use in \Cref{thm:counterexample}.
\begin{theorem}\label{thm:quadraticS}
  Let $m\in\Z_{>0}$. Then for any $d\ge m-1$, there exists a $2d$-separated set $S\subset {\Delta(m-1,2d+1)}\subset \Z_{\ge 0}^m$ with
  \[\#S =
    \begin{cases}
      \frac{m(m+2)}{4} & \text{if $m$ is even},\\
      \frac{(m+1)^2}{4} & \text{if $m$ is odd}.
    \end{cases}
  \]
\end{theorem}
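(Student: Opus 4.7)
The plan combines the reductions from \Cref{lem:corner} and \Cref{lem:boundedcoordinate} with an explicit construction of ``interior'' points. By \Cref{lem:corner}, we may assume the $m$ corner points $v_1, \ldots, v_m$ all lie in $S$, and by \Cref{lem:boundedcoordinate} every further element of $S$ must lie in the truncated simplex $T^\ast := \{x \in \Delta(m-1, 2d+1) : x_i \le d \text{ for all } i\}$. Since $\sum_i (s_i - s_i') = 0$ for $s, s' \in \Delta(m-1, 2d+1)$, the $\ell_1$-distance $|s-s'|_1$ is always even, so the condition ``$2d$-separated'' is equivalent to ``pairwise $\ell_1$-distance at least $2d+2$''. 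Hence it suffices to produce $N := \lfloor (m-1)^2/4 \rfloor$ points in $T^\ast$ pairwise at $\ell_1$-distance $\ge 2d+2$; then $\#S = m + N = \lfloor (m+1)^2/4 \rfloor$, which equals $(m+1)^2/4$ for odd $m$ and $m(m+2)/4$ for even $m$, matching the stated formulas.

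For the construction, I would partition $\{1, \ldots, m-1\}$ into two halves $A$ and $B$ of sizes $\lfloor (m-1)/2 \rfloor$ and $\lceil (m-1)/2 \rceil$, so that $|A| \cdot |B| = N$, and assign to each pair $(a, b) \in A \times B$ a point $p_{ab} \in T^\ast$ with bulk mass concentrated at positions $a$ and $b$ (taking the maximum value $d$ at each) and the residual unit of mass placed as a ``signature'' at another coordinate, depending on $(a, b)$. The hypothesis $d \ge m - 1$ provides enough auxiliary coordinates to host these signatures while keeping each coordinate at most $d$.

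The main technical obstacle is verifying the pairwise bound $|p_{ab} - p_{a'b'}|_1 \ge 2d + 2$. When $\{a, b\}$ and $\{a', b'\}$ are disjoint, the four distinct $d$-values contribute $\ge 4d$ automatically, and the inequality is trivial. The subtle case is when $\{a, b\}$ and $\{a', b'\}$ share exactly one coordinate: the shared $d$-values cancel, the remaining two $d$-values contribute exactly $2d$, and the signatures must supply the extra $\ge 2$ needed. A naive uniform placement of signatures fails here, so the signatures must be laid out so that any two ``shared-coordinate'' pairs produce at least $2$ units of additional separation. Arranging this uniformly across all $\binom{N}{2}$ comparisons is the combinatorial core of the argument, and is precisely where the assumption $d \ge m - 1$ is used: it guarantees enough free coordinates to distribute signatures without collision. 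Once this verification is done, \Cref{lem:boundedcoordinate} automatically yields $|p_{ab} - v_i|_1 > 2d$ for every $i$, so $S := \{v_1, \ldots, v_m\} \cup \{p_{ab} : (a,b) \in A \times B\}$ is a valid $2d$-separated set of the claimed cardinality.
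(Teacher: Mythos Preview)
Your counting and overall reduction are correct: adding the $m$ corners to $N=\lfloor(m-1)^2/4\rfloor$ interior points gives exactly the target cardinality, and \Cref{lem:boundedcoordinate} is the right tool for the corner--interior distances. The problem is that the interior construction you describe cannot be completed.

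You commit to points of the shape $p_{ab}=d\,e_a+d\,e_b+e_{\sigma(a,b)}$ and defer the choice of the signature map $\sigma$ to ``the combinatorial core of the argument.'' But push the shared-coordinate case to its conclusion. If $(a,b)$ and $(a,b')$ share the first index, the $d$-contributions give exactly $2d$, and the only way to reach $2d+2$ is to have $\sigma(a,b)\ne\sigma(a,b')$ \emph{and} $\sigma(a,b)\ne b'$, $\sigma(a,b')\ne b$ (a signature landing on the opposing $d$-slot costs a unit instead of gaining one). Running this over all $b'\in B$ forces $\sigma(a,b)\notin B$. The symmetric argument with a shared second index forces $\sigma(a,b)\notin A$. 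Since $A\cup B=\{1,\dots,m-1\}$, this pins $\sigma(a,b)=m$ for every pair, whence $\sigma(a,\cdot)$ is constant and the required injectivity fails. So no signature map exists for the $(d,d,1)$ template: the step you flag as the main technical obstacle is an actual obstruction, not a detail. Note also that the only place you invoke $d\ge m-1$ is in ``hosting'' single-unit signatures, but placing a unit of mass at one coordinate never uses the size of $d$, so the hypothesis is doing no work in your scheme.

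The paper avoids this by \emph{not} using the $(d,d,1)$ shape. Its interior points $v_{j,k}$ have support on an arithmetic progression $\{k,k+j,k+2j\}$ with values $(d-r-k,\ d,\ r+k+1)$ where $r=\operatorname{ord}_2(j)$; the hypothesis $d\ge m-1$ is used to guarantee $d-r-k\ge 0$. The asymmetry of the values, together with the $2$-adic parameter $r$, is precisely what breaks the deadlock above: when two supports overlap in two positions, the difference in the $r$- and $k$-values supplies the missing $+2$. If you want to rescue a bipartite-style construction, you will have to let the bulk values vary with $(a,b)$ rather than fixing both at $d$.
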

\begin{proof}
  We will explicitly construct such an $S$.
For $1\le j\le \tfrac{m}{2}$, $1\le k\le m-2j$ and $r:=\ord_2(j)$, let
\[v_{j,k}:=(\underbrace{0,\dots,0}_{k-1},d-r-k,\underbrace{0,\dots,0}_{j-1},d,\underbrace{0\dots,0}_{j-1},r+k+1,\underbrace{0,\dots,0}_{m-2j-k})\in\Delta(m-1,2d+1).\]
To check that $v_{j,k}\in\Delta(m-1,2d+1)$, we first verify that $d-r-k=d-\ord_2(j)-k\ge 0$. \begin{align*}
  d-\ord_2(j) - k &\ge d-\ord_2(j) - (m-2j)\\
  &=d-m -\ord_2(j)+2j\\
  &\ge d - m  + 2\\
  &\ge 0
\end{align*}
where the last inequality follows from the assumption $d\ge m-1$. Looking at the length, we have $|v_{j,k}|_1 = d-r-k+d+r+k+1=2d+1$, so it has the correct length. Hence $v_{j,k}$ is a valid point in $\Delta(m-1,2d+1)$.

We take 
\[S:=\set{\text{corner points of }\Delta(m-1,2d+1)}\cup \set{v_{j,k}}{1\le j\le \tfrac{m}{2}, 1\le k\le m-2j}.\]
We first check if all the entries of $v_{j,k}$ are at most $d$ to verify that they are $2d$-separated from the corner points. Since $d-r-k\le d$, the only other value we need to check is $r+k+1$, and
\[r+k+1=\ord_2(j) + k + 1\le \ord_2(j) + m-2j + 1\le m - 2 + 1=m-1\le d.\]
Hence $v_{j,k}$ is $2d$-separated from the corner points.

Now we check that $v_{j,k}$'s are $2d$-separated. So fix two distinct $v_{j,k}$ and $v_{j',k'}$, and let 
\[K:=\set{1\le i\le m}{(v_{j,k})_i\ne 0} \quad\text{and}\quad K':=\set{1\le i\le m}{(v_{j',k'})_i\ne 0}\]
be the support of $v_{j,k}$ and $v_{j',k'}$ respectively. Note that $\# (K\cap K')\le 2$, since $\# K=3=\# K'$ and $K=K'$ if and only if $v_{j,k}=v_{j',k'}$. Then
\begin{align*}
  |v_{j,k}-v_{j',k'}|_1  =&  \sum_{i\in K\cap K'}|(v_{j,k})_i-(v_{j',k'})_i| + 
  \sum_{i\notin K\cap K'}(v_{j,k})_i +   \sum_{i\notin K\cap K'}(v_{j',k'})_i \\
  \ge& \sum_{i\notin K\cap K'}(v_{j,k})_i + \sum_{i\notin K\cap K'}(v_{j',k'})_i.
\end{align*}
Now suppose $\# (K\cap K')\le 1$. Since the entries of $v_{j,k}$ are bounded by $d$, we have $\sum_{i\notin K\cap K'}(v_{j,k})_i\ge 2d+1-d = d+1$, and similarly for $v_{j',k'}$. Hence
\[ |v_{j,k}-v_{j',k'}|_1  \ge \sum_{i\notin K\cap K'}(v_{j,k})_i + \sum_{i\notin K\cap K'}(v_{j',k'})_i\ge (d+1)+(d+1)  > 2d,\]
and so $v_{j,k}$ and $v_{j',k'}$ are $2d$-separated. 

Now we consider the case when $\#(K\cap K') = 2$. There are three possible cases:
\begin{enumerate}
\item $j'=j$ and $k' = k+j$ (where the $k+j$-th position and the $k+2j$-th position agree),
\item $j'=2j$ and $k'=k$ (where the $k$-th position and the $k+2j$-th position agree), or
\item $j'=2j$ and $k'=k-j'$ (where the $k-2j$-th position and the $k$-th position agree).
\end{enumerate}
Note that if $j'\ne j$ and $j'\ne 2j$, then the spacing between the non-zero entries does not match, so the supports can only intersect at most 1 entry.

We first consider the case when $j'=j$ and $k'=k+j$. In this case, we have
\begin{align*}
&|v_{j,k}-v_{j,k'}|_1 \\
=& |(\overbrace{\underbrace{0,\dots,0}_{k-1},d-r-k,\underbrace{0,\dots,0}_{j-1}}^{k'-1},r+k',\underbrace{0\dots,0}_{j-1},r+k+1-d,\underbrace{\overbrace{0,\dots,0}^{j-1}, -r-k'-1,\overbrace{0\dots,0}^{m-2j-k'}}_{m-2j-k})|_1\\
  =&d-r-k+r+k'+d-(r+k+1)+r+k'+1\\
  =&2d+2k'-2k\\
    >&2d
\end{align*}
so they are $2d$-separated. 

If $j'=2j$ and $k'=k$ then we have
  \begin{align*}
    &|v_{j,k}-v_{j',k'}|_1\\
   =&  |(\overbrace{\underbrace{0,\dots,0}_{k-1}}^{k'-1},-r+r',\overbrace{\underbrace{0,\dots,0}_{j-1},d,\underbrace{0\dots,0}_{j-1}}^{j'-1},r+k+1-d,\underbrace{\overbrace{0,\dots,0}^{j'-1}, r'+k'+1,\overbrace{0,\dots,0}^{m-2j'-k'}}_{m-2j-k})|_1\\
    =& -r+r' + d + d-(r+k+1) + r'+k'+1\\
    =&2d + 2(r'-r) \\
    =& 2d + 2(\ord_2(2j) - \ord_2(j))\\
    >& 2d
  \end{align*}
  so these points are $2d$-separated.

  Finally if $j'=2j$ and $k'=k-j'$, then we have
  \begin{align*}
    &|v_{j,k}-v_{j',k'}|_1\\
   =&  |(\underbrace{\overbrace{0,\dots,0}^{k'-1},d-r'-k',\overbrace{0,\dots,0}^{j'-1} }_{k-1},-r-k,\overbrace{\underbrace{0,\dots,0}_{j-1},d,\underbrace{0,\dots,0}_{j-1}}^{j'-1},r+k-r'-k',\underbrace{\overbrace{0,\dots,0}^{m-2j'-k'}}_{m-2j-k})|_1\\
    =& d-r'-k'+r+k+d+r+k-r'-k'\\
    =& 2d + 2(k-k') + 2(r-r')\\
    >& 2d,
  \end{align*}
and those points are also $2d$-separated.

Finally we count how many points we get from this construction.
If $m$ is even, this gives total of
\[m+\sum_{j=1}^{m/2}(m-2j) = \frac{m(m+2)}{4} \]
points, and if $m$ is odd,
\[m+\sum_{j=1}^{(m-1)/2}(m-2j) = \frac{(m+1)^2}{4} \]
points.
\end{proof}
\begin{remark}
  One could add more points to the set $S$ given in \Cref{thm:quadraticS} by considering points with larger support. If a point has a support of size $5$ and has all its non-zero entries taking values around $\tfrac{2d+1}{5}$, then it will be $2d$-separated from any points in $S$. More generally, if $u'$ is the size of the support of the previous point added, then one could add a point with support of size $u>2u'$ with all its non-zero entries taking values around $\tfrac{2d+1}{u}$. While this does increase the size of $S$, it is only useful for large $m$, and it does not appear to increase the asymptotic of the size of $S$.
\end{remark}

We conclude the section with an upper bound on the size of a $2d$-separated set $S$ contained in $\Delta(m-1,2d+1)$.
We will first give an equivalent formulation of \Cref{q:main} by putting all the $d$'s together that will be useful for proving the upper bound.

\begin{question}
  \label{q:real}
  Let $m\in \Z_+$. Consider the regular $(m-1)$-simplex
  \[\Delta_{m-1} := \set{v\in\R^m}{v_i\ge 0, \sum_{i=1}^mv_i = 1}.\]
  What is the maximum $S\subset\Delta_{m-1}$ that is $1$-separated?
\end{question}

\begin{remark}\label{rem:convertQs}
  We can convert between the $S$'s in \Cref{q:main} and \Cref{q:real} in the following way. If a set $S\subset\Delta(m-1,2d+1)$ is $2d$-separated, then $S':=\set{\tfrac{1}{2d+1}s}{s\in S}\subset \Q_{\ge 0}^m$  is contained in $\Delta_{m-1}$. Now if $s,t\in S$ are distinct points, then $|s-t|_1>2d$. But since $|s|_1=|t|_1$ and $s,t\in\Z^m_{\ge 0}$, the distance $|s-t|_1$ must be even, so $|s-t|_1\ge 2d+2$. Hence $S'$ is $1$-separated and satisfies the conditions in \Cref{q:real}.

On the other hand, suppose $S\subset\Delta_{m-1}$ is $1$-separated. For each $s\in S$, take a nearby point $s':=(s'_i)\in\Delta_{m-1}\cap \Q^m$ such that the denominators of the $s'_i$'s are odd. Let $S'$ be the set of all these $s'$'s. Since being $1$-separated is an open condition, if we take $s'$ to be close enough to $s$, then $S'$ will be $1$-separated. Now suppose $2d+1$ is the LCM of the denominators appearing in the entries of all $s'\in S'$. Then $S'':=\set{ (2d+1)s'}{s'\in S'}\subset \Z^m_{\ge 0}$ is contained in $\Delta(m-1,2d+1)$ and is $2d$-separated. Hence $S''$ satisfies the conditions in \Cref{q:main}.
\end{remark}

\begin{prop}\label{prop:realUpperbound}
  Let $m\in\Z_{>0}$. Then for any $S\subset\Delta_{m-1}$ that is $1$-separated, we have
  \[ \# S \le 4^{m-1}.\]
\end{prop}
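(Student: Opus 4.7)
The plan is a volume-packing argument directly on $\Delta_{m-1}$. For each $s\in S$ I would consider the closed $\ell_1$-ball inside the simplex,
\[ B(s) := \{v \in \Delta_{m-1} : |v-s|_1 \le 1/2\}. \]
These balls are pairwise disjoint: any $v \in B(s)\cap B(t)$ would give $|s-t|_1 \le |s-v|_1 + |v-t|_1 \le 1$, contradicting the assumption that $S$ is $1$-separated. Hence, for any fixed translation-invariant $(m-1)$-dimensional measure on the affine hyperplane $\{\sum v_i = 1\}$, one has $\sum_{s\in S} \vol(B(s)) \le \vol(\Delta_{m-1})$.

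The main step is a uniform lower bound on $\vol(B(s))$ that is independent of $s$. Since $s,v\in \Delta_{m-1}$ satisfy $\sum(v_i-s_i)=0$, the positive and negative parts of $v-s$ have equal $\ell_1$-norm, giving the simplex identity
\[ |v-s|_1 = 2\sum_{i=1}^m (s_i - v_i)^+. \]
Using this, I would exhibit the explicit ``corner simplex''
\[ R(s) := \{v \in \Delta_{m-1} : v_i \ge \tfrac{3}{4}s_i \text{ for all } i\} \subseteq B(s), \]
since $v_i \ge \tfrac{3}{4}s_i$ forces $(s_i-v_i)^+ \le s_i/4$ for every $i$, so the identity yields $|v-s|_1 \le 2\cdot \tfrac{1}{4} = \tfrac{1}{2}$.

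The affine substitution $w := v - \tfrac{3}{4}s$ identifies $R(s)$ with $\{w \in \R^m_{\ge 0} : \sum w_i = 1/4\}$, a $\tfrac{1}{4}$-dilate of $\Delta_{m-1}$, so $\vol(R(s)) = (1/4)^{m-1}\vol(\Delta_{m-1})$, a quantity that \emph{does not depend on $s$}. Putting the pieces together, $\#S \cdot (1/4)^{m-1}\vol(\Delta_{m-1}) \le \vol(\Delta_{m-1})$, whence $\#S \le 4^{m-1}$.

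The main obstacle is to find the right region $R(s)$: a direct use of the standard $\ell_1$-ball volume in $\R^{m-1}$ (e.g.\ after projecting off the last coordinate) gives only a much weaker bound, because $B(s)$ is severely truncated when $s$ lies near a vertex of $\Delta_{m-1}$ — in the extreme case $s = e_j$, the ball $B(s)$ is just a small corner slice. The insight that unlocks the uniform $4^{m-1}$ bound is that the simplex identity $|v-s|_1 = 2\sum (s_i-v_i)^+$ converts the ball inequality into the purely one-sided condition $v_i \ge \tfrac{3}{4}s_i$, and the region cut out by these inequalities is always an affine $\tfrac{1}{4}$-dilate of $\Delta_{m-1}$ regardless of where $s$ sits.
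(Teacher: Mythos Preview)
Your proof is correct and uses the same volume-packing skeleton as the paper: pairwise disjoint $\ell_1$-balls of radius $1/2$ around each $s\in S$, followed by a uniform lower bound on $\vol(B(s)\cap\Delta_{m-1})$. The difference lies in how that lower bound is obtained. The paper asserts that $\vol(B(s,\tfrac12)\cap\Delta_{m-1})$ is minimised when $s$ is a corner point (justified only by the remark ``since $\Delta_{m-1}$ is a convex polytope''), and then computes that corner intersection explicitly as a $\tfrac14$-scaled simplex. You instead exhibit, for \emph{every} $s$, the explicit sub-simplex $R(s)=\{v\in\Delta_{m-1}:v_i\ge \tfrac34 s_i\}$ inside $B(s)$ and observe via the translation $w=v-\tfrac34 s$ that it is always a $\tfrac14$-dilate of $\Delta_{m-1}$; at a corner $s=e_j$ your $R(s)$ coincides with the paper's region. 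Your route is slightly more work to write down but is fully self-contained, whereas the paper's ``minimum at a corner'' claim, while geometrically plausible, is left without a proof.
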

\begin{proof}
  Suppose we have such an $S$. Then around each point $s\in S$, we have a radius $\tfrac{1}{2}$ (in $\ell_1$-norm) ball $B(s,\tfrac{1}{2})$ such that none of these balls intersect each other. So we have
  \[ \sum_{s\in S} \vol\left(B(s,\tfrac{1}{2})\cap \Delta_{m-1}\right) \le \vol(\Delta_{m-1}).\]
  The volume of $B(s,\tfrac{1}{2})\cap\Delta_{m-1}$ is the smallest when $s$ is a corner point of $\Delta_{m-1}$ since $\Delta_{m-1}$ is a convex polytope. For a corner point $v_1:=(1,0,\dots,0)$,
  \[ B(v_1,\tfrac{1}{2})\cap\Delta_{m-1} =\set{(u_i)\in\R^m}{ \frac{3}{4}\le u_1\le 1, 0\le u_i\le \frac{1}{4}\; \forall\; 2\le i\le m,\sum_{i=1}^mu_i=1},\]
  which is a $(m-1)$-simplex with vertices $(1,0,\dots,0)$, $(\tfrac{3}{4},\tfrac{1}{4},0,\dots,0),\dots, (\tfrac{3}{4},0,\dots,0,\tfrac{1}{4})$. This simplex has side length $\tfrac{1}{4}$ of the side length of the simplex $\Delta_{m-1}$, so 
  \[\#S\left(\tfrac{1}{4}\right)^{m-1}\vol(\Delta_{m-1})\le \sum_{s\in S} \vol\left(B(s,\tfrac{1}{2})\cap \Delta_{m-1}\right) \le \vol(\Delta_{m-1}). \]
  Hence $\# S \le 4^{m-1}$.
\end{proof}
We can translate the upper bound for \Cref{q:real} to an upper bound for \Cref{q:main}.
\begin{corollary}
  \label{cor:upperbound}
  Let $m\ge 1$ be an integer. Suppose $S\subset\Delta(m-1,2d+1)\subset\Z_{\ge 0}^m$ is a $2d$-separated set for some $d\ge 0$. Then $\# S \le 4^{m-1}$.
\end{corollary}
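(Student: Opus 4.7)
The plan is to deduce the bound from \Cref{prop:realUpperbound} by the rescaling procedure described in \Cref{rem:convertQs}. Given a $2d$-separated set $S \subset \Delta(m-1,2d+1) \subset \Z_{\ge 0}^m$, I would define
\[ S' := \left\{ \tfrac{1}{2d+1}\, s : s \in S \right\} \subset \R^m. \]
Each element of $S'$ has nonnegative coordinates summing to $1$, so $S' \subset \Delta_{m-1}$, and the scaling is a bijection so $\# S' = \# S$.

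The next step is to verify that $S'$ is $1$-separated. For distinct $s,t \in S$, we know $|s-t|_1 > 2d$ by the $2d$-separation hypothesis. Since $s, t \in \Z_{\ge 0}^m$ with $|s|_1 = |t|_1 = 2d+1$, the difference $s - t$ has integer coordinates summing to $0$; writing it as a sum of positive and negative parts shows $|s-t|_1 = 2\sum_{i:\, s_i > t_i}(s_i - t_i)$, which is even. Hence $|s - t|_1 \ge 2d + 2$, and therefore
\[ \left| \tfrac{s}{2d+1} - \tfrac{t}{2d+1} \right|_1 = \frac{|s-t|_1}{2d+1} \ge \frac{2d+2}{2d+1} > 1, \]
so $S'$ is $1$-separated inside $\Delta_{m-1}$.

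Finally, applying \Cref{prop:realUpperbound} to $S'$ gives $\# S = \# S' \le 4^{m-1}$, as required. The argument is essentially bookkeeping: the only substantive point is the parity observation that forces the strict inequality $|s-t|_1 > 2d$ to improve to $|s-t|_1 \ge 2d+2$, which is exactly what is needed to land at distance strictly greater than $1$ after rescaling. There is no real obstacle here beyond making sure this parity step is explicit.
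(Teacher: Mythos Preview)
Your proof is correct and follows essentially the same route as the paper: rescale by $\tfrac{1}{2d+1}$ to land in $\Delta_{m-1}$, use the parity observation (exactly as in \Cref{rem:convertQs}) to upgrade $|s-t|_1>2d$ to $|s-t|_1\ge 2d+2$ and hence obtain $1$-separation, then invoke \Cref{prop:realUpperbound}.
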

\begin{proof}
  By \Cref{rem:convertQs}, any set $S\subset\Delta(m-1,2d+1)$ that is $2d$-separated can be converted to a set $S'\subset \Delta_{m-1}$ that is $1$-separated. By \Cref{prop:realUpperbound}, $\# S'\le 4^{m-1}$, so $\# S= \# S' \le 4^{m-1}$.
\end{proof}

\section{A Ring with Trace Zero Non-commutators of Arbitrary Large Size}\label{sec:ring}
In this section, we construct a Noetherian domain $\Lambda$ of dimension $1$ that admits an $n\times n$ trace $0$ non-commutator matrix over $\Lambda$ for all $n\ge 2$. We use the following theorem of Heinzer and Levy to construct the ring, and then apply \Cref{cor:d0} with varying maximal ideals to show that there is a trace $0$ non-commutator of arbitrary size.

\begin{theorem}[{\cite[Theorem 2.1]{HL07}}]\label{thm:LH}
  Let $K$ be a field and $I$ a nonempty set. For each $i\in I$, let $(\Lambda_i,\fn(i))$ be a Noetherian local integral domain of dimension $1$ with maximal ideal $\fn(i)$ such that $\Lambda_i$ is a subring of $K$ and the quotient field $Q(\Lambda_i)$ equals $K$. Suppose:
  \begin{enumerate}
  \item Every non-zero element of $K$ is a unit in all but finitely many $\Lambda_i$; and
  \item For every pair of distinct indices $j\ne h$ there exist elements $x_j\in\Lambda_j$ and $x_h\in\Lambda_h$ such that:
    \begin{enumerate}
    \item $x_j$ and $x_h$ are non-units in $\Lambda_j$ and $\Lambda_h$ respectively;
    \item $x_j$ is a unit in $\Lambda_i$ when $i\ne j$, and $x_h$ is a unit in $\Lambda_i$ when $i\ne h$;
    \item $x_j+x_h$ is a unit in  $\Lambda_i$ for every $i$.
    \end{enumerate}
  \end{enumerate}
  Then the ring $\Lambda:=\bigcap_i\Lambda_i$ is Noetherian of dimension $1$, its distinct maximal ideals are $\fm(i):=\fn(i)\cap \Lambda$, and $\Lambda_i=\Lambda_{\fm(i)}$ for each $i\in I$.
\end{theorem}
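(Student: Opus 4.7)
The plan is to verify, in sequence, that the $\fm(i)$ are pairwise distinct prime ideals of $\Lambda$, that each is maximal, that these exhaust the maximal ideals of $\Lambda$, that $\Lambda_{\fm(i)} = \Lambda_i$, and finally that $\Lambda$ is Noetherian of dimension $1$. Distinctness is the easiest step: for $j \ne h$, condition (2) supplies $x_j$ that is a non-unit in $\Lambda_j$ but a unit in every other $\Lambda_i$; in particular $x_j \in \Lambda_i$ for every $i$, so $x_j \in \Lambda$, and $x_j \in \fm(j) \setminus \fm(h)$, hence $\fm(j) \ne \fm(h)$.

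The technical core will be the identification $\Lambda_{\fm(i)} = \Lambda_i$, from which maximality of $\fm(i)$ in $\Lambda$ will also follow. The inclusion $\Lambda_{\fm(i)} \subseteq \Lambda_i$ is immediate, since $b \in \Lambda \setminus \fm(i)$ means $b \notin \fn(i)$ and hence is a unit in $\Lambda_i$. For the reverse inclusion I would fix $\lambda \in \Lambda_i$, let $J := \set{k \in I \setminus \set{i}}{\lambda \notin \Lambda_k}$ (finite by (1)), and use the fact that in any $1$-dimensional local Noetherian domain inverting a nonzero non-unit already yields the entire fraction field $K$ (its only primes are $(0)$ and the maximal ideal). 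Thus for each $k \in J$ one may write $\lambda = a_k / x_k^{N_k}$ with $a_k \in \Lambda_k$, where $x_k$ is the element supplied by condition (2). Setting $b := \prod_{k \in J} x_k^{N_k}$, condition (2)(b) ensures $b \in \Lambda$ and $b \notin \fm(i)$, and a case split on $l \in J$, $l = i$, and $l \notin J \cup \set{i}$ shows that $b\lambda \in \Lambda_l$ in every case, so $b\lambda \in \Lambda$ and $\lambda = (b\lambda)/b \in \Lambda_{\fm(i)}$. Maximality of $\fm(i)$ would then follow by checking that the natural injection $\Lambda/\fm(i) \hookrightarrow \Lambda_i/\fn(i)$ is surjective, via a parallel clearing-denominators construction in which condition (2)(c) is used to guarantee that the correction term in $\Lambda$ leaves the target residue class at $i$ intact.

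For exhaustion, given a maximal ideal $\fm \subset \Lambda$ and any index $i$, either $\fm\Lambda_i$ is proper in $\Lambda_i$, in which case $\fm\Lambda_i \subseteq \fn(i)$ by locality, forcing $\fm \subseteq \fm(i)$ and hence $\fm = \fm(i)$ by maximality; or $\fm\Lambda_i = \Lambda_i$. If the latter held for \emph{every} $i$ simultaneously, I would derive a contradiction by combining condition (2) with the finite support $\set{i}{x \in \fm(i)}$ of any nonzero $x \in \fm$ (finite by (1)) and the expressions for $1 \in \fm\Lambda_i$ that produce elements of $\fm$ which are units in $\Lambda_i$. Finally, for Noetherianity and dimension $1$: each localization $\Lambda_{\fm(i)} = \Lambda_i$ is $1$-dimensional Noetherian, and (1) ensures every nonzero ideal of $\Lambda$ meets only finitely many of the $\fm(i)$, so a local-to-global assembly of ideal generators should succeed. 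The main obstacle I expect is precisely this last step: translating Noetherianity at each of the (possibly infinitely many) localizations into genuine finite generation of every ideal of $\Lambda$. The finiteness built into condition (1) is the essential input, and I expect the elements $x_i$ from condition (2), which are units at all but one $\Lambda_i$, to act as a partition-of-unity that reduces the global problem to a finite one.
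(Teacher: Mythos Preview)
The paper does not contain a proof of this statement: Theorem~\ref{thm:LH} is quoted verbatim from Heinzer and Levy \cite[Theorem~2.1]{HL07} and used as a black box in the construction of the ring $\Lambda$ in Theorem~\ref{thm:unbounded}. There is therefore no proof in the paper for your attempt to be compared against.

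That said, your sketch follows the natural line of argument for a result of this type. The identifications $\Lambda_{\fm(i)}=\Lambda_i$ via a clearing-denominators argument using the elements $x_k$ from condition~(2) is the right idea, and your treatment of distinctness is correct. Two places are genuinely incomplete as written. First, the exhaustion step: your contradiction from ``$\fm\Lambda_i=\Lambda_i$ for every $i$'' is only gestured at; you would need to actually produce an element of $\fm$ that is a unit in every $\Lambda_i$ (hence a unit in $\Lambda$), and doing this requires combining finitely many local expressions for $1$ in a way that respects all indices simultaneously, which takes more care than you indicate. Second, and more seriously, the Noetherianity step is exactly where the real content of the Heinzer--Levy argument lies, and ``a local-to-global assembly of ideal generators should succeed'' is not a proof. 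One needs to show concretely that for a nonzero ideal $\mathfrak{a}\subset\Lambda$, finitely many generators of each $\mathfrak{a}\Lambda_i$ (for the finitely many $i$ with $\mathfrak{a}\subset\fm(i)$) can be corrected to lie in $\Lambda$ and still generate $\mathfrak{a}$; the elements from condition~(2) are indeed the tool, but the verification is not automatic. If you want a complete proof you should consult \cite{HL07} directly.
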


\unbounded
\begin{proof}
  We will start with the construction of the field $K$ needed to apply \Cref{thm:LH}. Let $k$ be an algebraically closed field and $S:=k[X_{ij}:i\ge 2, 1\le j\le i]$, a polynomial ring with infinitely many variables.
Let $J\subset S$ be the ideal generated by the following elements:
\[ J:= ( X^2_{i1}-X^{p_j}_{ij}: i\ge 2, 2\le j\le i),\] 
where $p_j$ is the $j$-th prime.
\begin{claim}\label{claim:domain}
  $R:=S/J$ is an integral domain.
\end{claim}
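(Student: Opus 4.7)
The approach is to exhibit $R$ as a filtered colimit of finitely generated subrings with injective transition maps, then reduce to showing each such subring is a domain by decomposing it as a tensor product of simpler algebras, and finally prove the building blocks are domains.

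For each $n \ge 2$, set $S_n := k[X_{ij} : 2 \le i \le n,\; 1 \le j \le i]$ and let $J_n \subset S_n$ be the ideal generated by the relations $\{X_{i1}^2 - X_{ij}^{p_j} : 2 \le i \le n,\; 2 \le j \le i\}$. A direct specialization argument (setting $X_{i'\ast} = 0$ for $i' > n$ in any expression of an element of $J$) shows $J \cap S_n = J_n$, so $R$ is the union of the $R_n := S_n/J_n$ along injective maps, and it suffices to show each $R_n$ is a domain. Because relations with different row indices $i$ involve disjoint variables,
\[R_n \iso A_2 \otimes_k A_3 \otimes_k \cdots \otimes_k A_n, \qquad A_i := k[X_{i1},\dots,X_{ii}]/(X_{i1}^2 - X_{ij}^{p_j} : 2\le j\le i).\]
Since $k$ is algebraically closed, tensor products over $k$ of integral $k$-algebras are integral (``geometric integrality''), so the problem further reduces to proving each $A_i$ is a domain.

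Fix $i \ge 2$. Since every defining relation of $A_i$ couples $X_{i1}$ with exactly one of $X_{i2},\dots,X_{ii}$, we have
\[A_i \iso B_{i,2} \otimes_{k[X_{i1}]} B_{i,3} \otimes_{k[X_{i1}]} \cdots \otimes_{k[X_{i1}]} B_{i,i}, \qquad B_{i,j} := k[X_{i1}][T]/(T^{p_j} - X_{i1}^2).\]
Each $B_{i,j}$ is free of rank $p_j$ over $k[X_{i1}]$ with basis $1, T, \ldots, T^{p_j-1}$, so $A_i$ is free of rank $N := p_2 p_3 \cdots p_i$ over $k[X_{i1}]$, hence torsion-free and injecting into $A_i \otimes_{k[X_{i1}]} k(X_{i1})$. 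This localization is naturally isomorphic to $F_2 \otimes_{k(X_{i1})} \cdots \otimes_{k(X_{i1})} F_i$, where $F_j := k(X_{i1})[T]/(T^{p_j} - X_{i1}^2)$; each $F_j$ is a field extension of $k(X_{i1})$ of the prime degree $p_j$, because $X_{i1}^2$ is not a $p_j$-th power in $k(X_{i1})$ (a degree comparison, valid in any characteristic), so $T^{p_j} - X_{i1}^2$ is irreducible.

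The main obstacle is showing that the tensor product $F_2 \otimes_{k(X_{i1})} \cdots \otimes_{k(X_{i1})} F_i$ is itself a field. To handle this I would fix an $N$-th root $u$ of $X_{i1}^2$ in an algebraic closure of $k(X_{i1})$; then each $u^{N/p_j}$ is a $p_j$-th root of $X_{i1}^2$, giving embeddings $F_j \hookrightarrow k(X_{i1})(u)$, and since $\gcd(N/p_2,\dots,N/p_i) = 1$ a B\'{e}zout relation recovers $u$ from the $u^{N/p_j}$, so the compositum $F_2 \cdots F_i$ equals $k(X_{i1})(u)$. The polynomial $T^N - X_{i1}^2$ is irreducible over $k(X_{i1})$ in every characteristic (again by the standard $T^n - a$ criterion, since $X_{i1}^2$ is not an $\ell$-th power for any prime $\ell \mid N$), so $[k(X_{i1})(u) : k(X_{i1})] = N = \prod p_j = \prod [F_j : k(X_{i1})]$. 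This degree equality forces pairwise linear disjointness of the $F_j$'s and identifies their tensor product with the field $k(X_{i1})(u)$. The potentially delicate case where $\ch(k) = p_j$ for some $j$ (so $F_j$ is inseparable) is handled uniformly by the $T^n - a$ irreducibility argument above.
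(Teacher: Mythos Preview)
Your proof is correct and shares the paper's scaffolding: reduce to $S_n/J_n$, factor as $\bigotimes_k A_i$ over disjoint row-indices, invoke that tensor products of integral algebras over an algebraically closed field remain integral, and then show each $A_i$ (the paper's $T_t$) is a domain. The divergence is in this last step. The paper argues by induction on the number of variables, proving at each stage that $X_t^{p_t}-X_1^2$ is prime in $T_{t-1}[X_t]$: first irreducibility over the fraction field $Q(T_{t-1})$ via the $T^n-a$ criterion plus a degree-coprimality check, and then a hands-on coefficient comparison to descend primeness from $Q(T_{t-1})[X_t]$ to $T_{t-1}[X_t]$. Your route is more global: $A_i$ is free over $k[X_{i1}]$, hence embeds in its generic fibre $F_2\otimes_{k(X_{i1})}\cdots\otimes_{k(X_{i1})} F_i$, and the latter is a field because all the $F_j$ embed in $k(X_{i1})(u)$ for a single $N$-th root $u$ of $X_{i1}^2$ and the dimension count $[k(X_{i1})(u):k(X_{i1})]=N=\prod p_j$ forces the multiplication map from the tensor product onto the compositum to be an isomorphism. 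Your approach cleanly sidesteps both the induction and the Gauss-lemma-style descent argument; the paper's version is more pedestrian but has the mild virtue of making the intermediate degrees explicit. Both ultimately rest on the same $T^n-a$ irreducibility criterion, and both handle the inseparable case ($\operatorname{char} k = p_j$) uniformly since that criterion is characteristic-free and your dimension argument needs no separability.
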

\begin{proof}[Proof of \Cref{claim:domain}]
We show that $R$ is an integral domain by proving that $J$ is a prime ideal. So let $g,h\in S$ be such that $gh\in J$. Then 
  \[gh = \sum_{i\ge 2}\sum_{j=2}^i a_{ij}(X^{2}_{i1}-X^{p_j}_{ij}), \]
  for some $a_{ij}\in S$ with only finitely many of the $a_{ij}$'s being non-zero. Then there are only finitely many variables $X_{ij}$ appearing in the equation, so we may restrict our ring $S$ to 
  \[S_n := k[X_{ij}: 2\le i\le n, 2\le j\le i],\] 
  and the ideal $J$ to an ideal
  \[J_n := ( X^{2}_{i1}-X^{p_j}_{ij}: 2\le i\le n, 2\le j\le i)\subset S_n,\]
  for some $n$. Then $g\in J_n$ or $h\in J_n$ will imply $g\in J$ or $h\in J$, so it is sufficient to show that $J_n$ is a prime ideal. We will prove that $S_n/J_n$ is a domain in order to show that $J_n$ is a prime ideal.
  Now,
  \[S_n/J_n \iso k[X_{21},X_{22}]/(X^2_{21}-X_{22}^3) \tensor_k\dots\tensor_k k[X_{n1},\dots,X_{nn}]/(X^2_{n1}-X_{n2}^3,\dots,X^{2}_{n1}-X^{p_n}_{nn}). \]
  A tensor product of finitely generated $k$-algebras that are domains is a domain if $k$ is algebraically closed (see proof of \cite[\href{https://stacks.math.columbia.edu/tag/05P3}{Tag 05P3}]{stacks-project}).
So if \[T_t:=k[X_{1},\dots,X_{t}]/(X^2_{1}-X_{2}^3,\dots,X^{2}_{1}-X^{p_t}_{t}),\] is a domain for all $2\le t\le n$, then $S_n/J_n$ is also a domain.

Now we will prove that $T_t$ is a domain by induction on $t$, along with the fact that $[Q(T_t):k(X_1)]=p_2p_3\cdots p_t$ where $Q(T_t)$ is the quotient field of $T_t$. If $t=2$, then $T_2=k[X_1,X_2]/(X^2_1-X^3_2)$. Now $X_1^2-X_2^3$ is an irreducible polynomial in $k[X_1,X_2]$, and $k[X_1,X_2]$ is a UFD, so $X_1^2-X_2^3$ is a prime element. Hence $T_2$ is a domain, and $[Q(T_2):k(X_1)]=3$. Now suppose that $T_{t-1}$ is a domain and $[Q(T_{t-1}):k(X_1)] = p_2p_3\cdots p_{t-1}$. Since 
\[ T_t \iso T_{t-1}[X_t]/(X_{t}^{p_{t}}-X_1^2),\]
we only need to show that $f_t:=X^{p_t}_t-X_1^2$ is prime in $T_{t-1}[X_t]$ to show that $T_t$ is a domain. We first show that $f_t$ is irreducible in $k(X_1)[X_t]$. Now $f_t=X_t^{p_t}-X_1^2$ has a prime degree and $k(X_1)$ contains all the $p_t$-th roots of unity, so by \cite[Theorem VI.9.1]{Lang}, $f_t$ is irreducible in $k(X_1)[X_{t}]$ if and only if it has no roots in $k(X_1)$. It is clear that $f_t$ has no roots in $k(X_1)$, so $f_t$ is irreducible in $k(X_1)[X_t]$. Now $\deg f_t=p_t$ is a prime and does not divide $[Q(T_{t-1}):k(X_1)]=p_2p_3\cdots p_{t-1}$, so $f_t$ is still irreducible in $Q(T_{t-1})[X_t]$. Hence $f_t$ is a prime in $Q(T_{t-1})[X_t]$ since $Q(T_{t-1})[X_t]$ is a UFD.

Now will use the fact that $f_t$ is a prime in $Q(T_{t-1})[X_t]$ to show that $f_t$ is a prime in $T_{t-1}[X_t]$. So suppose $f_t\mid gh$ for some $g,h\in T_{t-1}[X_t]$. Since $f_t$ is a prime in $Q(T_{t-1})[X_t]$, we may assume that $f_tg' = g$ for some $g':=\sum_{i=0}^{n'}g'_iX_t^i\in Q(T_{t-1})[X_t]$ without loss of generality. Suppose $g'\notin T_{t-1}[X_t]$. Then there exists a maximal index $j$ such that $g'_j\notin T_{t-1}$. If $g=\sum_{i=0}^ng_iX_t^i$, then looking at the degree $j+p_t$ coefficients of the equation $f_tg'=g$, we have
\[ g_{j+p_t} = g'_j - g'_{j+p_t}X_1^2.\]
We have $g'_{j+p_t}\in T_{t-1}$ since $j$ was the maximal index such that $g'_j\notin T_{t-1}$. Hence $g'_j=g_{j+p_t}+g'_{j+p_t}\in T_{t-1}$, which is a contradiction, so $g'\in T_{t-1}[X_t]$. Hence we have $f_t\mid g$ in $T_{t-1}[X_t]$ and so $f_t$ is a prime in $T_{t-1}[X_t]$. This implies that $T_t=T_{t-1}[X_t]/(f_t)$ is a domain, and now $Q(T_t)=Q(T_{t-1})[X_t]/(f_t)$, so 
\[[Q(T_t):k(X_1)] = [Q(T_t):Q(T_{t-1})][Q(T_{t-1}):k(X_1)]= p_tp_2p_3\cdots p_{t-1},\]
and we are done.
Hence $T_t$, $S_n/J_n$ and $R=S/J$ are all domains and we have proved \Cref{claim:domain}.
\end{proof}

Since $R$ is a domain, we may take $K:=Q(R)$, the quotient field of $R$. This will be the $K$ we take for applying \Cref{thm:LH}. We now define the $\Lambda_n$'s needed to apply \Cref{thm:LH}. We take the set $I$ to be $\N$. Let $Y_{ij}\in R\subset K$ be the residue class of $X_{ij}$. For $n\ge 2$, define a subfield
\[F_n := k(Y_{ij}: i\ne n, 1\le j\le i)\subset K,\]
and a subring
\[R_n:= F_n[Y_{n,1},\dots,Y_{n,n}]\subset K.\]
Let $P_n:=F_n[Z_1,\dots,Z_n]$ be the polynomial ring in $n$ variables.
\begin{claim}\label{claim:iso}
  \begin{align*}
  \varphi_n\lmaps[l]{P_n/(Z^{2}_1-Z^3_2,\dots, Z^{2}_1-Z^{p_n}_n)}{R_n=F_n[Y_{n,1},\dots,Y_{n,n}]}\\
  \widebar{Z}_i&\longmapsto Y_i,
\end{align*}
is an isomorphism, where $\widebar{Z}_i$ is the residue class of $Z_i$.
\end{claim}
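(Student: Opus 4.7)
The plan is to verify that $\varphi_n$ is well-defined, surjective, and injective. Well-definedness is automatic: the defining relations $Z_1^2-Z_j^{p_j}$ of the source are sent to $Y_{n,1}^2-Y_{n,j}^{p_j}$, which lie in the image of $J$ and hence vanish in $R_n$. Surjectivity is also immediate, since $R_n = F_n[Y_{n,1},\dots,Y_{n,n}]$ is generated over $F_n$ by the $Y_{n,i}$'s, all of which lie in the image of $\varphi_n$.

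The main task is injectivity. I would first show that the source $\widetilde R_n := P_n/(Z_1^2-Z_2^3,\dots,Z_1^2-Z_n^{p_n})$ is an integral domain by repeating the inductive argument of \Cref{claim:domain} verbatim, with $F_n$ playing the role of $k$. The only property of $k$ used there was that $k(X_1)$ contains all $p_t$-th roots of unity (to invoke \cite[Theorem VI.9.1]{Lang}); this still holds for $F_n$ since $k\subset F_n$. The same induction then yields that $\widetilde R_n$ is a domain with $[Q(\widetilde R_n):F_n(\widebar Z_1)] = p_2 p_3\cdots p_n$ and with $\widebar Z_1$ transcendental over $F_n$.

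Next, I would show that $Y_{n,1}\in K$ is transcendental over $F_n$. Setting $A_i := k[X_{i,1},\dots,X_{i,i}]/(X_{i,1}^2-X_{i,j}^{p_j})$, the same tensor-product argument implicit in \Cref{claim:domain} gives $R \cong \bigotimes_{i\ge 2} A_i$ as $k$-algebras (tensor taken over $k$), with $R'_n:=\bigotimes_{i\ne n} A_i$ a $k$-subalgebra whose fraction field is $F_n$, so that $R \cong R'_n\otimes_k A_n$. In $A_n$ the image of $X_{n,1}$ is transcendental over $k$ (since $[Q(A_n):k(X_{n,1})]$ is finite by \Cref{claim:domain}), so its powers are $k$-linearly independent. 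By flatness of $R'_n$ over $k$, no nonzero polynomial in $R'_n[T]$ annihilates $Y_{n,1}$ inside $R'_n\otimes_k A_n = R$. Clearing denominators shows that no nonzero polynomial in $F_n[T]$ annihilates $Y_{n,1}$, so $Y_{n,1}$ is transcendental over $F_n$.

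Combining these, $\varphi_n$ sends $\widebar Z_1$ to the transcendental element $Y_{n,1}$, and the relations $Y_{n,j}^{p_j}=Y_{n,1}^2$ force $[K:F_n(Y_{n,1})]\le \prod_{j=2}^n p_j$, matching $[Q(\widetilde R_n):F_n(\widebar Z_1)]$. Hence $\varphi_n$ induces a surjection of fraction fields of equal finite degree over isomorphic base fields, which must be an isomorphism; consequently $\varphi_n$ has zero kernel. The main obstacle is the transcendence of $Y_{n,1}$ over $F_n$, for which the tensor-product factorization $R = R'_n\otimes_k A_n$ together with flatness is the natural tool.
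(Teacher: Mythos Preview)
Your argument is essentially correct and takes a genuinely different route from the paper. The paper proceeds by a direct computation of the kernel: it lifts $\varphi_n$ to $\tilde\varphi_n\colon P_n\to R_n$, clears denominators to reduce to the subring $P_n':=k[Y_{ij}:i\ne n][Z_1,\dots,Z_n]\subset P_n$, and then reads off $\ker(\tilde\varphi_n|_{P_n'})$ from a commutative triangle involving the quotient map $q\colon S\to S/J$ and a surjection $\psi_n\colon S\to P_n'$; since $\psi_n$ is onto, $\ker(\tilde\varphi_n|_{P_n'})=\psi_n(J)$, which one checks equals $(Z_1^2-Z_2^3,\dots,Z_1^2-Z_n^{p_n})P_n'$. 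This is an elementary chase using only the presentation of $R$. Your approach instead re-runs the domain argument of \Cref{claim:domain} over $F_n$ to show the source $\widetilde R_n$ is a one-dimensional $F_n$-domain, and separately uses the tensor factorisation $R\cong R'_n\otimes_k A_n$ to prove $Y_{n,1}$ is transcendental over $F_n$; you then conclude by degree comparison. This is more structural but longer, and re-establishes facts the paper's diagram chase bypasses.

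There is one genuine gap in your write-up: the sentence ``$\varphi_n$ induces a surjection of fraction fields\dots'' is circular, since a ring map only induces a map on fraction fields once you know it is injective. The fix is immediate with what you have: $\widetilde R_n$ is a finitely generated $F_n$-domain of transcendence degree $1$, hence of Krull dimension $1$, so any nonzero prime is maximal; if $\ker\varphi_n\ne 0$ then $R_n$ would be a field, contradicting the transcendence of $Y_{n,1}$ over $F_n$. Alternatively, your tensor decomposition already gives the claim in one stroke: localising $R\cong R'_n\otimes_k A_n$ at $R'_n\setminus\{0\}$ yields $R_n=F_n\otimes_k A_n\cong P_n/(Z_1^2-Z_2^3,\dots,Z_1^2-Z_n^{p_n})$, and this isomorphism is precisely $\varphi_n^{-1}$.
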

\begin{proof}[Proof of \Cref{claim:iso}]
It is clear that $\varphi_n$ is surjective, so let us show that it is injective. Consider the lift of $\varphi_n$,
  \begin{align*}
  \tilde{\varphi}_n\lmaps[l]{P_n}{R_n=F_n[Y_{n,1},\dots,Y_{n,n}]}\\
    Z_i&\longmapsto Y_i.
\end{align*}
We see that  $\ker\tilde{\varphi}_n\supset (Z^{2}_1-Z^3_2,\dots, Z^{2}_1-Z^{p_n}_n)$, so to conclude that $\varphi_n$ is injective, we only need to show that 
 \[\ker\tilde{\varphi}_n\subset (Z^{2}_1-Z^3_2,\dots, Z^{2}_1-Z^{p_n}_n).\]
So suppose that $f\in \ker\tilde{\varphi}_n$. If $c\in F^\times_n$ is the product of the denominators of the coefficients of $f$, then 
\[cf\in k[Y_{ij}:i\ne n, 1\le j\le i][Z_1,\dots,Z_n]=:P_n'\subset P_n.\]
Moreover, $cf\in (Z^{2}_1-Z^3_2,\dots, Z^{2}_1-Z^{p_n}_n)$ implies $f\in(Z^{2}_1-Z^3_2,\dots, Z^{2}_1-Z^{p_n}_n)$, so it is sufficient to check that 
\[   \ker\left(\tilde{\varphi}_n|_{P'_n} \right)\subset (Z^{2}_1-Z^3_2,\dots, Z^{2}_1-Z^{p_n}_n)P_n'\subset P_n'.\]
We have the following commutative diagram
\[\begin{tikzcd}
  S=k[X_{ij}:i\ge 2, 1\le j\le i] \arrow[d, "\psi_n", two heads] \arrow[rd, "q", two heads] &   \\
  {P_n'=k[Y_{ij}:i\ne n, 1\le j\le i][Z_1,\dots,Z_n]} \arrow[r, "\tilde{\varphi}_n|_{P'_n}", two heads]   &  \tilde{\varphi}_n(P_n')=k[Y_{ij}:i\ge 2, 1\le j\le i]=S/J
\end{tikzcd},\]
where $q$ is the quotient map and
\begin{align*}
  \psi_n\lmaps[l]{S}{P_n'}\\
  X_{ij} &\longmapsto   \begin{cases}
    Y_{ij} &\text{if }i\ne n,\\
    Z_j &\text{if }i=n.
  \end{cases}
\end{align*}
 Since $\psi_n$ is surjective, $\ker\tilde{\varphi}_n|_{P_n'}=\psi_n(\ker q) = \psi_n(J)$, and 
 \[ \psi_n(J) = ( \psi_n(X_{i1})^2-\psi_n(X_{ij})^{p_j}: i\ge 2, 2\le j\le i) = (Z^{2}_1-Z^3_2,\dots, Z^{2}_1-Z^{p_n}_n) \subset P_n'. \]
 Hence 
 \[ \ker\tilde{\varphi}_n|_{P_n'}= (Z^{2}_1-Z^3_2,\dots, Z^{2}_1-Z^{p_n}_n)\subset P_n',\]
 and $\varphi_n$ is an isomorphism.
\end{proof}
Since $P_n/(Z^{2}_1-Z^3_2,\dots, Z^{2}_1-Z^{p_n}_n)$ is easier to work with than $R_n$, we will use the former presentation to prove properties about $R_n$ to verify the assumptions of \Cref{thm:LH}.

The ideal $(Y_{n,1},\dots,Y_{n,n})\subset R_n$ is maximal since it corresponds to 
$(\widebar{Z}_1,\dots,\widebar{Z}_n)$ so we may localise $R_n$ at $(Y_{n,1},\dots,Y_{n,n})$ to obtain a local domain
\[\Lambda_n := (R_n)_{(Y_{n,1},\dots,Y_{n,n})}\subset K,\]
with a maximal ideal $\fn(n):=(Y_{n,1},\dots,Y_{n,n})\subset \Lambda_n$. Since $\Lambda_n$ is a localisation of a finitely generated $F_n$-algebra, it is Noetherian.

Now we show that $\dim \Lambda_n=1$. We have an isomorphism
\[\Lambda_n\iso \left(P_n/(Z^{2}_1-Z^3_2,\dots, Z^{2}_1-Z^{p_n}_n)\right)_{(\widebar{Z}_1,\dots,\widebar{Z}_n)}, \]
through $\varphi_n$. We may interchange localisation and quotient so
\[\Lambda_n\iso (P_n)_{(Z_1,\dots,Z_n)}/(Z^{2}_1-Z^3_2,\dots, Z^{2}_1-Z^{p_n}_n),\]
as well.
Now
\[\left(Z_1,Z^{2}_1-Z^3_2,\dots, Z^{2}_1-Z^{p_n}_n\right)
  =\left(Z_1, Z_2^3,\dots, Z_n^{p_n}\right) \subset (P_n)_{(Z_1,\dots,Z_n)} \]
 is a $(Z_1,\dots,Z_n)$-primary ideal, so ${Z_1,Z^{2}_1-Z^3_2,\dots, Z^{2}_1-Z^{p_n}_n}$ is a system of parameters of $(P_n)_{(Z_1,\dots,Z_n)}$ (see the start of \cite[Ch. 14]{Matsumura} for the definition of a system of parameters). Hence by \cite[Theorem 14.1]{Matsumura},
\[\dim \Lambda_n=\dim (P_n)_{(Z_1,\dots,Z_n)}/(Z^{2}_1-Z^3_2,\dots, Z^{2}_1-Z^{p_n}_n) = n-(n-1) = 1.\]

Finally note that $\Lambda_n$ contains all $Y_{ij}$'s, so $R\subset \Lambda_n\subset K=Q(R)$. Hence to apply \Cref{thm:LH}, we only need to verify conditions (1) and (2). 

For (1), if $f\in K$, then $f$ can be written as a rational function in the $Y_{ij}$'s. If $f$ is non-zero, then only finitely many $Y_{ij}$'s appear in such a representation of $f$. If $n\ge 2$ is such that $Y_{nj}$ does not appear in such a representation for all $1\le j\le n$, then $f\in F_n$, so $f\in\Lambda_n$ is a unit. Hence any non-zero element in $K$ is a unit in all but finitely many $\Lambda_n$'s.

For (2), if $j \ne h$, then take $x_j:= Y_{j,1}$ and $x_h:=Y_{h,1}$. These are not units in $\Lambda_j$ and $\Lambda_h$ respectively, and are units in $\Lambda_i$ for other $i$'s not equal to $j$ or $h$. Moreover, if $i\ne j$ and $i\ne h$, then $x_j+x_h\in F_i\subset R_i$ so $x_j+x_h$ is a unit in $\Lambda_i$. Hence $\Lambda:=\bigcap_i \Lambda_i$ is a Noetherian domain of dimension $1$ by \Cref{thm:LH}.

Finally, we show that the embedding dimension of $\fm(i):=\fn(i)\cap\Lambda$ is $i$. This will prove the theorem since for all $n\ge 2$, we can find $i$ such that $n\le \tfrac{i+1}{2}$, and we can apply \Cref{cor:d0} with $R=\Lambda$ and $\fm = \fm(i)$ to construct an $n\times n$ trace $0$ non-commutator.

We now compute the embedding dimension of $\fm(i)$'s. Since $\Lambda_i=\Lambda_{\fm(i)}$ by \Cref{thm:LH}, we can check the embedding dimension of the local ring $\Lambda_i$ instead. We have
\[\Lambda_i\iso (P_i)_{(Z_1,\dots,Z_i)}/(Z^{2}_1-Z^3_2,\dots, Z^{2}_1-Z^{p_i}_i),\]
so
\[ \fn(i)/\fn(i)^2\iso (\widebar{Z}_1,\dots,\widebar{Z}_i)/(\widebar{Z}_1,\dots,\widebar{Z}_i)^2.\]
Now we show that $\widebar{Z}_1,\dots,\widebar{Z}_i$ form an $F_i$-basis of $(\widebar{Z}_1,\dots,\widebar{Z}_i)/(\widebar{Z}_1,\dots,\widebar{Z}_i)^2$. So suppose
\begin{equation}
a_1\widebar{Z}_1+\dots+a_i\widebar{Z}_i \in (\widebar{Z}_1,\dots,\widebar{Z}_i)^2,\label{eq:6}
\end{equation}
for some $a_1,\dots,a_i\in F_i$. Now
\[(Z^{2}_1-Z^3_2,\dots, Z^{2}_1-Z^{p_i}_i) \subset (Z_1,\dots,Z_i)^2, \]
so we can lift \cref{eq:6} to $(P_i)_{(Z_1,\dots,Z_n)}$ to obtain
\[ a_1Z_1+\dots+a_iZ_i \in (Z_1,\dots,Z_i)^2.\]
But then $a_j=0$ for all $j$ since all the terms in the left-hand side are degree $1$. Hence 
\[\dim_{F_i}(\widebar{Z}_1,\dots,\widebar{Z}_i)/(\widebar{Z}_1,\dots,\widebar{Z}_i)^2=i \]
\end{proof}

\section{Trace Zero $2\times 2$ Matrices}
\label{sec:2x2}
In this section, we summarise the current progress on the case of $2\times 2$ matrices. We start by recalling various facts and definitions needed to state \Cref{thm:characterisation} where we put together known results from the 1970s and 1980s. We then review facts and definitions on the K-theory of affine surfaces to be able to state new results such as \Cref{cor:fpalgebraisop},\Cref{cor:gradedopring} and \Cref{thm:bbopring}. 

For this section, we assume that $\Spec(R)$ is connected for any ring $R$. 
This is not a strong assumption since we can always reduce the question about commutators to the case where $\Spec(R)$ is connected if $R$ is Noetherian. Indeed, if $R$ is a Noetherian ring, then $\Spec(R)$ has only finitely many connected components, and $R=\prod_{i=1}^r R_i$ where $\Spec(R_i)$ is a connected component of $\Spec(R)$. A matrix $M\in \Mat_n(\prod_{i=1}^rR_i)$ is a commutator if and only if all $M_i$'s are commutators where $M=(M_i)\in \prod_{i=1}^r\Mat_n(R_i)\iso\Mat_n(\prod_{i=1}^rR_i)$.

Lissner showed by elementary means that $\begin{pmatrix}c_1&c_2\\c_3&-c_1 \end{pmatrix}\in \Mat_2(R)$ is a commutator except possibly when $c_i\notin (c_j,c_k)\subset R$ for any $(i\; j\; k)$ which is a permutation of $(1\; 2\; 3)$ (see \cite[Lemma 3.1 and 3.3]{Lissner61}).

The main tool in the $2\times 2$ case is the following lemma connecting a commutator with an exterior power.
Recall that $v\in \bigwedge^pR^n$ is \emph{decomposable} if there exist $v_1,\dots,v_p\in R^n$ such that $v=v_1\wedge \dots \wedge v_p$.
\begin{lemma}\label{lem:2x2decomposable}
  Let $ M:=
  \begin{pmatrix}
    a & b \\
    c & -a
  \end{pmatrix}\in\Mat_2(R)$. Then $M$ is a commutator if and only if $
  ae_2\wedge e_3+be_3\wedge e_1+ce_1\wedge e_2\in\bigwedge^2 R^3$ is decomposable. In particular, every $2\times 2$ trace $0$  matrix is a commutator if and only if every vector in $\bigwedge^2R^3$ is decomposable.
\end{lemma}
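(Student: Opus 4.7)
The plan is to build an explicit $R$-module isomorphism $\Phi : \bigwedge^2 R^3 \to \mathfrak{sl}_2(R)$ that identifies the wedge product with the commutator bracket, so the lemma reduces to bookkeeping of coordinates.

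First, I would compute $[X,Y]$ for generic $X = (x_{ij})$, $Y = (y_{ij}) \in \Mat_2(R)$. A direct expansion gives
\[
[X,Y]_{11} = x_{12}y_{21} - x_{21}y_{12} = -[X,Y]_{22}, \quad [X,Y]_{12} = (x_{11}-x_{22})y_{12} - x_{12}(y_{11}-y_{22}),
\]
\[
[X,Y]_{21} = x_{21}(y_{11}-y_{22}) - (x_{11}-x_{22})y_{21}.
\]
The key observation is that $[X,Y]$ depends only on the triples $\phi(X) := (x_{11}-x_{22},\, x_{12},\, x_{21})$ and $\phi(Y) \in R^3$, and the three nontrivial entries are exactly the $2\times 2$ minors of the matrix with rows $\phi(X), \phi(Y)$, i.e., the coefficients of $\phi(X)\wedge \phi(Y) \in \bigwedge^2 R^3$. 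Thus $(X, Y) \mapsto [X,Y]$ descends to an $R$-bilinear alternating map $R^3 \times R^3 \to \mathfrak{sl}_2(R)$, yielding an $R$-linear map $\Phi : \bigwedge^2 R^3 \to \mathfrak{sl}_2(R)$.

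Next I would check that $\Phi$ is an isomorphism of rank-$3$ free $R$-modules: in the bases $\{e_2\wedge e_3,\, e_3\wedge e_1,\, e_1\wedge e_2\}$ of $\bigwedge^2 R^3$ and $\{E_{11}-E_{22},\, E_{21},\, E_{12}\}$ of $\mathfrak{sl}_2(R)$, $\Phi$ is (up to sign) a coordinate permutation. From this both directions of the lemma follow immediately: if $M = [X,Y]$ then $\Phi^{-1}(M) = \phi(X)\wedge \phi(Y)$ is decomposable, and conversely, given a decomposition $\Phi^{-1}(M) = u\wedge v$, the choice $X := \left(\begin{smallmatrix} u_1 & u_2 \\ u_3 & 0\end{smallmatrix}\right)$, $Y := \left(\begin{smallmatrix} v_1 & v_2 \\ v_3 & 0\end{smallmatrix}\right)$ satisfies $\phi(X) = u$, $\phi(Y) = v$, hence $[X,Y] = \Phi(u\wedge v) = M$.

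The one subtlety is matching the lemma's coefficient positions: my natural formula yields $\Phi^{-1}\left(\begin{smallmatrix} a & b \\ c & -a\end{smallmatrix}\right) = a\, e_2\wedge e_3 + c\, e_3\wedge e_1 + b\, e_1\wedge e_2$, which differs from the bivector $\omega := a e_2\wedge e_3 + b e_3\wedge e_1 + c e_1\wedge e_2$ in the lemma by swapping the coefficients $b$ and $c$. To reconcile, I would invoke the $R$-module automorphism of $R^3$ that swaps $e_2$ and $e_3$; its induced automorphism of $\bigwedge^2 R^3$ sends $\omega$ to $-\Phi^{-1}(M)$. Since decomposability is preserved under $R$-module automorphisms and under multiplication by $-1$, the two bivectors are decomposable simultaneously, giving the lemma. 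The ``in particular'' statement then follows at once from the bijectivity of $\Phi$, since $\Phi$ sends the subset of decomposable bivectors onto the subset of commutators in $\mathfrak{sl}_2(R)$. The only real obstacle is careful coordinate bookkeeping; there is no deeper content beyond the isomorphism $\bigwedge^2 R^3 \cong \mathfrak{sl}_2(R)$ intertwining $\wedge$ with $[\,,\,]$.
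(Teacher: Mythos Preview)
Your argument is correct: the direct computation of $[X,Y]$ in terms of the triples $\phi(X),\phi(Y)\in R^3$ yields precisely the $2\times 2$ minors, so the induced map $\Phi:\bigwedge^2 R^3\to\mathfrak{sl}_2(R)$ is an isomorphism carrying decomposable bivectors onto commutators, and your handling of the $b\leftrightarrow c$ discrepancy via the automorphism swapping $e_2$ and $e_3$ is sound. The paper simply writes ``Omitted'' for this proof, so there is no approach to compare against; your computation is the standard cross-product identification (indeed Lissner's original motivation), and is exactly what the omission presumes the reader can supply.
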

\begin{proof}
  Omitted.
\end{proof}

\begin{definition}
  Given  $n\ge 1$, we say that \emph{$R$ is $T_n^{n+1}$} if every vector in $\bigwedge^nR^{n+1}$ is decomposable. We call a ring $R$ an \emph{OP-ring} if it is $T_n^{n+1}$ for all $n\ge 1$.
\end{definition}
This definition was made by David Lissner in \cite{Lissner65} and OP stands for \emph{outer product}.

\begin{remark}
  \Cref{lem:2x2decomposable} can be rephrased as stating that every trace $0$ matrix in $\Mat_2(R)$ is a commutator if and only if $R$ is $T_2^3$. 
\end{remark}

From here on in this section, we will focus on the case when $R$ is Noetherian. For non-Noetherian OP rings, see \cite{JW17}.

Given a property P, we say that a ring $R$ is \emph{locally P} if every localisation of $R$ at a prime ideal has the property P. For $T_n^{n+1}$ or OP, locally true at every prime is equivalent to locally true at every maximal ideal.

\begin{prop}\label{prop:locallyop}
  A Noetherian ring $R$ is locally $T_n^{n+1}$ or locally OP if and only if every localisation of $R$ at a maximal ideal is $T_n^{n+1}$ or OP, respectively.
\end{prop}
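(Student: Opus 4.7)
The forward direction is tautological: if $R_\fp$ is $T_n^{n+1}$ for every prime $\fp$, then in particular this holds at every maximal ideal. The content of the proposition is thus the converse, and since being OP is just being $T_n^{n+1}$ for every $n \ge 1$, it suffices to prove the $T_n^{n+1}$ statement; the OP case then follows by applying it coordinate-wise in $n$. Note that the Noetherian hypothesis is not really needed for the argument I am about to give: existence of a maximal ideal above any prime just uses Zorn's lemma.

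The plan is to reduce to the following key localisation lemma: \emph{if $S$ is $T_n^{n+1}$ and $T \subset S$ is any multiplicative set, then the localisation $S_T$ is also $T_n^{n+1}$.} Granting this lemma, the proposition follows quickly: given any prime $\fp \subset R$, pick a maximal ideal $\fm \supseteq \fp$, so that $R_\fp \iso (R_\fm)_{\fp R_\fm}$ is a localisation of $R_\fm$. By hypothesis $R_\fm$ is $T_n^{n+1}$, so the lemma applied with $S = R_\fm$ gives that $R_\fp$ is $T_n^{n+1}$ as well, which is what we need.

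For the key lemma, I would use the compatibility of exterior powers with localisation. Since $S_T$ is a flat $S$-algebra (more basically, since localisation is an exact base change), one has a natural isomorphism
\[ \bigwedge\nolimits^n_{S_T}(S_T^{n+1}) \iso \left( \bigwedge\nolimits^n_S S^{n+1} \right) \tensor_S S_T. \]
Thus every element $v \in \bigwedge^n_{S_T}(S_T^{n+1})$ can be written in the form $v = w/t$ with $w \in \bigwedge^n_S S^{n+1}$ and $t \in T$. Using that $S$ is $T_n^{n+1}$, write $w = v_1 \wedge \cdots \wedge v_n$ with $v_i \in S^{n+1}$; pulling the denominator into the first factor yields
\[ v \;=\; \tfrac{1}{t}(v_1 \wedge \cdots \wedge v_n) \;=\; (v_1/t) \wedge v_2 \wedge \cdots \wedge v_n, \]
an explicit decomposition of $v$ inside $\bigwedge^n_{S_T}(S_T^{n+1})$, proving the lemma.

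There is no real obstacle here; the only point that requires any care is the compatibility $\bigwedge^n(M \tensor_S S_T) \iso (\bigwedge^n M) \tensor_S S_T$, which is standard and can be cited (for instance from the universal property of exterior powers plus the flatness of localisation). Once this is in hand the entire argument is formal.
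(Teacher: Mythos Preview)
Your proof is correct and follows essentially the same route as the paper: choose a maximal ideal $\fm\supseteq\fp$, use that $R_\fp$ is a localisation of $R_\fm$, clear a denominator to lift an element of $\bigwedge^n R_\fp^{n+1}$ to $\bigwedge^n R_\fm^{n+1}$, decompose there, and push back while absorbing the unit into one factor. The only cosmetic difference is that you package the localisation step as a standalone lemma and note (correctly) that the Noetherian hypothesis is unnecessary.
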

\begin{proof}
  If $R$ is locally $T_n^{n+1}$, then every localisation of $R$ at a maximal ideal is $T_n^{n+1}$ by definition.
  On the other hand, suppose every localisation of $R$ at a maximal ideal is $T_n^{n+1}$, and let $\fp\subset R$ be a prime ideal. Then there exists a maximal ideal $\fm\supset \fp$ and $R_\fp$ is a localisation of $R_\fm$ at $\fp R_\fm$. Now let $u\in\bigwedge^nR_\fp^{n+1}$, and $f:R_\fm^{n+1}\to R_\fp^{n+1}$ be the $R_\fm$-module homomorphism induced by the localisation at $\fp R_\fm$. Then there exists $r\in R_\fm\setminus{\fp R_\fm}$ such that $ru \in \im \bigwedge^nf$, so let $v\in \bigwedge^nR_\fm^{n+1}$ be such that $(\bigwedge^{n}f)(v) = ru$. Since $R_\fm$ is $T_n^{n+1}$, $v$ is decomposable, so 
  \[ v = v_1\wedge \dots \wedge v_n \]
  for some $v_1,\dots,v_n\in R_\fm^{n+1}$. Now 
  \[ u = \frac{1}{r}({\textstyle\bigwedge^n}f)(v_1\wedge\dots\wedge v_n)=\frac{1}{r} f(v_1)\wedge \dots\wedge f(v_n)\in {\textstyle\bigwedge^n}R^{n+1}_\fp \]
  so $u$ is decomposable. Hence $R_\fp$ is $T_n^{n+1}$.
  The equivalence for OP follows from the equivalence for $T_n^{n+1}$ for all $n\ge 1$.
\end{proof}

For a local ring, the OP property can be detected by the minimal number of generators for its maximal ideal.
\begin{theorem}[{\cite[Theorem]{Towber70}}]\label{thm:local}
  Let $R$ be a Noetherian local ring with maximal ideal $\fm$. Then $R$ is an OP-ring if and only if $\fm$ is generated by 2 elements.
\end{theorem}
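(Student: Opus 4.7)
The plan is to prove the two directions separately.

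\emph{Easier direction} ($R$ OP $\Rightarrow \nu(\fm) \le 2$): Assume toward contradiction that $\nu(\fm) \ge 3$. By Nakayama's lemma, a minimal generating set of $\fm$ projects to an $R/\fm$-basis of $\fm/\fm^2$, so one may pick three elements $x_1, x_2, x_3 \in \fm$ whose classes in $\fm/\fm^2$ are linearly independent. Applying \Cref{cor:d0} with $I = \fm$ and $n = 2$ produces an explicit trace zero $2 \times 2$ matrix over $R$ (with entries drawn from $x_1, x_2, x_3, -x_1$) that is not a commutator. Via \Cref{lem:2x2decomposable}, this matrix corresponds to a non-decomposable element of $\bigwedge^2 R^3$, contradicting that $R$ is $T_2^3$ and hence OP.

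\emph{Harder direction} ($\nu(\fm) \le 2 \Rightarrow R$ OP): Fix $n \ge 1$ and $v \in \bigwedge^n R^{n+1}$. Identify $\bigwedge^n R^{n+1}$ with $R^{n+1}$ via the standard basis (up to signs), so that $v$ corresponds to a tuple $(v_1,\dots,v_{n+1})$, and decomposability of $v$ amounts to realizing the $v_i$ as the signed maximal minors of some $(n+1)\times n$ matrix over $R$. Proceed by case analysis on whether some coordinate is a unit. If $v_i$ is a unit for some $i$, then after reindexing one writes down an explicit ``near-identity'' $(n+1)\times n$ matrix whose $n\times n$ minors recover the prescribed $(v_1,\dots,v_{n+1})$ directly. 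If instead all $v_i \in \fm$, choose generators $\fm = (x,y)$ and write $v_i = x\alpha_i + y\beta_i$; the goal is to build an $(n+1)\times n$ matrix in $x$, $y$, and the $\alpha_i,\beta_i$ whose maximal minors are exactly the $v_i$.

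The main obstacle is the ``all in $\fm$'' case: the matrix entries must weave $x$, $y$, and the coefficients $\alpha_i, \beta_i$ together so that the prescribed minors come out correctly, and no uniform \emph{linear} construction in the $\alpha_i, \beta_i$ suffices (a quick attempt, e.g.\ taking the $(n+1)\times 2$ matrix with rows $(\alpha_i,\beta_i)$, produces minors of the form $\alpha_i\beta_j-\alpha_j\beta_i$ rather than $v_i$). Two natural strategies present themselves. First, induction on $n$: the tuples $(\alpha_i)$ and $(\beta_i)$ each live in $R^{n+1}$ and relate to decomposability at lower wedge levels, which one could leverage to assemble the desired matrix at level $n$. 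Second, a Hilbert--Burch flavored argument: the ideal $(v_1,\dots,v_{n+1}) \subset (x,y)$ admits a short free resolution whose second differential, because $\fm$ is $2$-generated, can be arranged to be an $(n+1)\times n$ matrix with precisely the desired maximal minors. Either way, the proof hinges on the $2$-generation of $\fm$ supplying just enough flexibility in the syzygies, and this step is the technical heart of the result.
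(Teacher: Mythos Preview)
The paper does not prove this theorem; it is quoted from Towber's 1970 paper without proof, so there is no in-paper argument to compare against.

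Your easier direction is essentially correct, with one small fix: \Cref{cor:d0} requires the $x_i$ to form a full $R/\fm$-basis of $\fm/\fm^2$, not merely to be linearly independent. Take a full minimal generating set $x_1,\dots,x_m$ of $\fm$ with $m=\nu(\fm)\ge 3$ and apply \Cref{cor:d0} with $n=2$; the resulting non-commutator still only involves $x_1,x_2,x_3$, so the conclusion is unchanged.

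Your harder direction, however, is only a sketch of two strategies, neither of which is carried out, and the Hilbert--Burch suggestion does not apply as stated. Hilbert--Burch concerns perfect ideals of grade~$2$: it says that if $I$ has a free resolution $0\to R^n\to R^{n+1}\to I\to 0$ of the right depth, then some fixed generating set of $I$ agrees, \emph{up to a common unit}, with the maximal minors of the first map. But an arbitrary tuple $(v_1,\dots,v_{n+1})$ with all $v_i\in\fm$ need not generate an ideal of grade~$2$ (all $v_i$ could be $0$, or all equal to $x$), the $v_i$ need not be a minimal generating set, and even when Hilbert--Burch does apply you only recover the $v_i$ up to a unit scalar, which is not enough here. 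The inductive idea is likewise only gestured at: you have not said how decomposability information about $(\alpha_i)$ and $(\beta_i)$ at a lower wedge level would combine to give decomposability of $(x\alpha_i+y\beta_i)$ at level~$n$. Towber's actual proof is a nontrivial explicit construction, and you will need to consult the original paper to complete this direction.
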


\begin{corollary}\label{cor:regularlocalop}
  Let $R$ be a regular ring of dimension $d \le 2$. Then $R$ is locally OP.
\end{corollary}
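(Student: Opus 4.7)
The plan is to reduce the global statement to a local one and then invoke Towber's theorem. By \Cref{prop:locallyop}, to show that $R$ is locally OP it suffices to check that the localisation $R_\fm$ is an OP-ring for every maximal ideal $\fm\subset R$. Since $R$ is regular by hypothesis, each such $R_\fm$ is a regular local ring, and its dimension is bounded above by $\dim R\le 2$.

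Next I would use the defining property of a regular local ring: if $(R_\fm,\fm R_\fm)$ is a regular local ring of dimension $e$, then its maximal ideal $\fm R_\fm$ is generated by exactly $e$ elements (a regular system of parameters). Since $e\le 2$ in our situation, $\fm R_\fm$ is generated by at most two elements, which in particular means it is generated by two elements (appending a zero generator if $e<2$).

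Finally, applying \Cref{thm:local} to the Noetherian local ring $R_\fm$, the condition that its maximal ideal is generated by $2$ elements is equivalent to $R_\fm$ being an OP-ring. Hence every localisation of $R$ at a maximal ideal is OP, so by \Cref{prop:locallyop}, $R$ is locally OP. I do not expect any real obstacle here: the argument is a direct chaining of \Cref{prop:locallyop}, the definition of regularity, and \Cref{thm:local}. The only mild point to confirm is that the dimension-$0$ and dimension-$1$ cases (fields and discrete valuation rings) are genuinely covered by \Cref{thm:local}, which they are, since ``generated by $2$ elements'' allows fewer generators.
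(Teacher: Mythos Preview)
Your proposal is correct and follows essentially the same route as the paper: check at each maximal ideal $\fm$ that $R_\fm$ is regular local of dimension at most $2$, so $\fm R_\fm$ is generated by at most two elements, and then apply \Cref{thm:local}. The only cosmetic difference is that you explicitly invoke \Cref{prop:locallyop} to reduce to maximal ideals, whereas the paper leaves that reduction implicit.
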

\begin{proof}
  Let $\fm\subset R$ be a maximal ideal. Then $R_\fm$ is a regular local ring of dimension $\le d$, so $\fm R_\fm$ is generated by at most 2 elements. Hence $R_\fm$ is an OP ring by \Cref{thm:local}, and so $R$ is locally OP.
\end{proof}

Recall that a \emph{semilocal ring} is a ring with finitely many maximal ideals. For semilocal rings, the OP property can be detected locally.
\begin{theorem}[{\cite[Theorem]{Hinohara72}}]\label{thm:semilocal}
  Let $R$ be a Noetherian semilocal ring. Then $R$ is an OP-ring if and only if $R$ is locally OP.
\end{theorem}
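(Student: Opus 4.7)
The forward direction follows by the same localization argument used in the proof of \Cref{prop:locallyop}: for any maximal ideal $\fm\subset R$ and any $u\in \bigwedge^n R_\fm^{n+1}$, clear denominators to find $s\in R\setminus\fm$ and $v\in \bigwedge^n R^{n+1}$ with $(\bigwedge^n f)(v)=su$, where $f\colon R^{n+1}\to R_\fm^{n+1}$ is the localization map. The OP hypothesis on $R$ gives $v=v_1\wedge\cdots\wedge v_n$, and then $u=\tfrac{1}{s}f(v_1)\wedge f(v_2)\wedge\cdots\wedge f(v_n)$ is a decomposition over $R_\fm$, so each $R_\fm$ is OP.

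For the converse, let $R$ be Noetherian semilocal with maximal ideals $\fm_1,\ldots,\fm_k$ and assume each $R_{\fm_i}$ is OP; equivalently, each $\fm_i R_{\fm_i}$ is 2-generated by \Cref{thm:local}. Fix $n\ge 1$ and $u\in\bigwedge^n R^{n+1}$; the plan is a local-to-global patching argument. Consider the $R$-linear map $\phi_u\colon R^{n+1}\to R$ defined by $v\mapsto u\wedge v$ via the identification $\bigwedge^{n+1}R^{n+1}\cong R$, and set $K_u:=\ker\phi_u$. In the ``primitive'' case where the Pl\"ucker coordinates of $u$ generate the unit ideal of $R$, the local OP property at each $\fm_i$ forces a decomposition $u_{\fm_i}=v_1^{(i)}\wedge\cdots\wedge v_n^{(i)}$ into a unimodular family, which extends to an $R_{\fm_i}$-basis of $R_{\fm_i}^{n+1}$ and makes $(K_u)_{\fm_i}$ a rank-$n$ direct summand of $R_{\fm_i}^{n+1}$. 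Consequently, $K_u$ is a finitely generated projective $R$-module of constant rank $n$. Since projectives of constant rank over a Noetherian semilocal ring are free, $K_u$ has an $R$-basis $v_1,\ldots,v_n$, and a local check at each $\fm_i$ shows $u$ equals $c\cdot v_1\wedge\cdots\wedge v_n$ for some $c\in R^\times$, which can be absorbed into one factor to give an exact decomposition.

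The main obstacle lies in the non-primitive case: when the Pl\"ucker coordinates of $u$ share common divisors, the rank of $(K_u)_{\fm_i}$ can jump above $n$ and the projectivity argument above breaks. The plan to handle this is to first reduce to the primitive case by factoring $u=c\cdot u'$, where $c\in R$ is a generator of the content ideal of $u$, decompose the primitive $u'$ via the argument above, and then absorb $c$ into one factor of the decomposition. Making this content reduction rigorous requires patching local content generators using the Chinese Remainder Theorem applied to the Jacobson radical of $R$, which is the delicate step. A secondary technicality is invoking the standard fact that finitely generated projective modules of constant rank over a Noetherian semilocal ring are free, which follows inductively by splitting off free summands using the Bass stable rank of semilocal rings.
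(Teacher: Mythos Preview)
The paper does not give its own proof of this statement; it is quoted verbatim as Hinohara's theorem and used as a black box. So there is no ``paper's proof'' to compare against, and your attempt is being measured only against correctness.

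Your forward direction is fine. The converse, however, has a real gap in the non-primitive reduction. You propose to write $u = c\cdot u'$ where $c$ generates the content ideal (the ideal generated by the Pl\"ucker coordinates of $u$), and then handle the primitive $u'$. But the content ideal of a decomposable element need not be principal, even over a \emph{local} OP ring. Take $R=k[[x,y]]$, which is regular local of dimension $2$ and hence OP by \Cref{thm:local}, and let
\[
u \;=\; e_1\wedge(xe_2+ye_3) \;=\; x\,e_1\wedge e_2 + y\,e_1\wedge e_3 \;\in\; {\textstyle\bigwedge^2}R^3.
\]
The content ideal is $(x,y)=\fm$, which is not principal, so no factorization $u=c\cdot u'$ with $u'$ primitive exists. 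Your Chinese Remainder patching cannot repair this: the obstruction is already local. Thus the reduction you describe as ``the delicate step'' simply fails.

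Interestingly, your kernel module $K_u=\ker(v\mapsto u\wedge v)$ behaves better than you fear in this example: one computes $K_u=Re_1\oplus R(xe_2+ye_3)$, free of rank $2$, and the wedge of this basis is exactly $u$. So the $K_u$ route is not hopeless, but to make it work you would need to prove directly that $(K_u)_{\fm_i}$ is free of rank $n$ for every nonzero $u$ (not via primitivity), and that $u$ lies in $R\cdot(v_1\wedge\cdots\wedge v_n)$ for any basis of $K_u$; neither is established by what you wrote, and the second requires an argument beyond a ``local check'' once the scalar is not a unit. As written, the converse is incomplete.
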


Lissner first used the term OP-ring in \cite{Lissner65} and showed that a Dedekind domain is an OP-ring in \cite[Appendix]{Lissner65}. Towber subsequently showed that if $R$ is a Dedekind domain then $R[x]$ is an OP-ring in \cite[Theorem 1.2]{Towber68}. Estes and Matijevic then proved the following characterisation of OP-rings. Note that in the equivalence stated, instead of the condition locally OP, Estes and Matijevic had $R_\fm$ is OP for any maximal ideal $\fm\subset R$. However, by \Cref{prop:locallyop} that is equivalent to $R$ being locally OP.
Recall that an $R$-module $M$ is \emph{$R$-oriented} if $\bigwedge^nM\iso R$ for some $n\ge 1$.

\begin{theorem}[{\cite[Theorem 1, Corollary 1, Corollary 2]{EM80}}]\label{thm:general}
  Let $R$ be a Noetherian ring satisfying one of the following properties:
  \begin{enumerate}[label=(\alph*)]
  \item $R$ is reduced,
  \item every minimal ideal of $R$ is principal, or
  \item $R$ has only finitely many maximal ideals with non-regular localisation.
  \end{enumerate}
  Then $R$ is an OP-ring if and only if $R$ is locally OP and every finitely generated $R$-oriented module is free.
\end{theorem}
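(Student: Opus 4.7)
The plan is to prove both directions separately, building on the correspondence between decomposable elements of $\bigwedge^n R^{n+1}$ and free submodules of $R^{n+1}$ suggested by \Cref{lem:2x2decomposable}.

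For the forward direction, that $R$ OP implies $R$ locally OP and every finitely generated $R$-oriented module is free: locally OP is immediate by clearing denominators. Given $u \in \bigwedge^n R_\fp^{n+1}$, write $u = s^{-1} v$ with $v \in \bigwedge^n R^{n+1}$ and $s \in R \setminus \fp$; by OP of $R$, $v = v_1 \wedge \cdots \wedge v_n$, and the decomposition descends to $R_\fp$. For the oriented modules statement, given a finitely generated $M$ with $\bigwedge^n M \cong R$, the plan is to find a surjection $R^{n+1} \twoheadrightarrow M$ and analyse the short exact sequence $0 \to K \to R^{n+1} \to M \to 0$. The rank-one kernel $K$ satisfies $\det R^{n+1} \cong K \otimes \bigwedge^n M$, which by the orientation hypothesis forces $K \cong R$. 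Splitting the sequence (by using OP applied to a suitable element of $\bigwedge^n R^{n+1}$ whose decomposability is equivalent to the splitting) then exhibits $M$ as a free module. The alternative hypotheses (a), (b), (c) enter here to guarantee that $M$ can in fact be generated by $n+1$ elements and that the rank computations hold globally.

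For the backward direction, fix $v \in \bigwedge^n R^{n+1}$. The plan is to associate to $v$ the submodule
\[
M_v := \{\, w \in R^{n+1} : w \wedge v = 0 \text{ in } {\textstyle\bigwedge^{n+1}} R^{n+1} \,\}.
\]
By local OP, $v$ is locally decomposable, so $M_v$ is locally free of rank $n$ and the inclusion $\bigwedge^n M_v \hookrightarrow \bigwedge^n R^{n+1}$ identifies $\bigwedge^n M_v$ locally with the line generated by $v$. From this one extracts a canonical isomorphism $\bigwedge^n M_v \cong R$, so $M_v$ is a finitely generated $R$-oriented module. By the second assumption, $M_v$ is free of rank $n$ with some basis $v_1, \ldots, v_n \in R^{n+1}$, and a direct comparison gives $v_1 \wedge \cdots \wedge v_n = v$ up to a unit that can be absorbed into one of the $v_i$.

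The main obstacle is the control of $M_v$ (or of $R^{n+1}/M_v$) in the non-regular or non-reduced setting: ensuring that $M_v$ has the correct rank everywhere, is projective, and carries a well-defined orientation is precisely where the hypothesis on $R$ intervenes. The reduced case (a) enables generic-point arguments; principal minimal ideals (b) control the embedded primes so that rank is well-behaved; and the finite-singular-locus hypothesis (c) localises the difficulty to a semilocal subring where \Cref{thm:semilocal} supplies the needed patch. Handling these pathologies uniformly is the technical heart of the argument, and is the step I expect to require the most care.
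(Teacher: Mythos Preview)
The paper does not give its own proof of this statement; it is quoted directly from \cite{EM80}. So there is no argument in the paper against which to compare your proposal, and what follows is an assessment of the proposal on its own merits.

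Your backward direction has a genuine gap. The module $M_v = \{w \in R^{n+1} : w \wedge v = 0\}$ is the \emph{saturation} of the span of any decomposition of $v$, and a basis of $M_v$ need not wedge to $v$ even up to a unit. Take $R = k[t]$ (a PID, so hypothesis~(a) holds), $n = 2$, and $v = t\, e_1 \wedge e_2 \in \bigwedge^2 R^3$. Since $R$ is a domain, $w \wedge v = 0$ iff $w \wedge e_1 \wedge e_2 = 0$, so $M_v = Re_1 \oplus Re_2$; this is free of rank~$2$, but $e_1 \wedge e_2$ differs from $v$ by the non-unit $t$. Thus your claim that ``$\bigwedge^n M_v$ is locally the line generated by $v$'' already fails at the prime $(t)$, and the final step ``$v_1 \wedge \cdots \wedge v_n = v$ up to a unit'' is false. (Of course $v = (te_1) \wedge e_2$ is decomposable; the point is that your construction cannot see this.) The module $M_v$ forgets precisely the ``depth'' of $v$, which is the entire difficulty. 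Your forward direction has a separate gap: you posit a surjection $R^{n+1} \twoheadrightarrow M$, but an oriented module of rank~$n$ over a ring of Krull dimension~$2$ (which is all OP forces, via \Cref{thm:local}) is a priori generated by $n+2$ elements by Forster--Swan, not $n+1$; the hypotheses (a)--(c) do not obviously repair this, and in \cite{EM80} they enter instead to control oriented modules that are not a priori projective.
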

\begin{remark}
  Estes and Matijevic note in \cite[Page 1356]{EM80} that the three conditions on $R$ are probably not necessary, and suspect that every $R$-oriented module being free and $R$ being locally OP are the only necessary conditions for $R$ to be an OP-ring.
\end{remark}

\begin{corollary}\label{cor:orientedisprojective}
  Let $R$ be a reduced Noetherian locally OP ring. Then any finitely generated $R$-oriented module is projective. 
\end{corollary}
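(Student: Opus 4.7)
The plan is to reduce to the local case via \Cref{thm:general} and then invoke the standard characterisation of projective modules as finitely generated locally free modules over a Noetherian ring.

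First, I would observe that if $M$ is finitely generated and $R$-oriented, so that $\bigwedge^n M \iso R$ for some $n\ge 1$, then for every maximal ideal $\fm\subset R$ localisation commutes with exterior powers, giving
\[ {\textstyle\bigwedge^n} M_\fm \iso \left({\textstyle\bigwedge^n} M\right)_\fm \iso R_\fm. \]
Hence $M_\fm$ is a finitely generated $R_\fm$-oriented module.

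Next I would apply \Cref{thm:general} to $R_\fm$ itself. Being a localisation of a Noetherian reduced ring, $R_\fm$ is Noetherian and reduced, so it satisfies hypothesis (a) of the theorem (it also satisfies (c) trivially, having a unique maximal ideal). The hypothesis that $R$ is locally OP makes $R_\fm$ an OP-ring, and a local ring is automatically locally OP. The forward direction of \Cref{thm:general} therefore yields that every finitely generated $R_\fm$-oriented module is free; in particular $M_\fm$ is free.

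Finally, since $R$ is Noetherian and $M$ is finitely generated, $M$ is finitely presented, and a finitely presented module that is free after localisation at every maximal ideal is projective. Thus $M$ is projective. There is no real obstacle here: the corollary is essentially a packaging of \Cref{thm:general} applied locally, combined with a standard descent statement for projectivity; the reduced hypothesis on $R$ is used only to ensure that each $R_\fm$ falls within the scope of \Cref{thm:general}.
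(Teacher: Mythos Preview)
Your proof is correct and follows essentially the same route as the paper: localise, observe that $M_\fm$ is $R_\fm$-oriented, apply \Cref{thm:general} to the local ring to conclude $M_\fm$ is free, and then deduce projectivity of $M$; the paper localises at all primes rather than just maximal ideals, but this is immaterial. Your side remark that any local ring automatically satisfies hypothesis (c) of \Cref{thm:general} is a pleasant observation the paper does not make explicit.
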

\begin{proof}
  Let $M$ be a finitely generated $R$-oriented module, so that $\bigwedge^n M \iso R$ for some $n\ge 1$. We will show that $M$ is projective by showing that it is locally free. So let $\fp\in \Spec(R)$ be a prime ideal. Then
  \[ R_\fp \iso {\textstyle\left(\bigwedge^n M\right)} \tensor_R R_\fp \iso \textstyle{\bigwedge^n} M_\fp.\]
  Now $R$ is locally OP and so $R_\fp$ is an OP-ring, and $M_\fp$ is a finitely generated $R_\fp$-oriented module, so $M_\fp$ is free by \Cref{thm:general}. Hence $M$ is projective.
\end{proof}

Recall that an $R$-module $M$ is \emph{stably-free} if $M\oplus R^m\iso R^n$ for some $n$ and $m$. The \emph{Grothendieck group} of a ring $R$, $K_0(R)$, is the group completion of the monoid of isomorphism classes of finitely generated projective $R$-modules, under direct sum. And $SK_0(R)$ is the subgroup of $K_0(R)$ consisting of the classes $[P]-[R^m]$, where $P$ is an $R$-oriented projective module of constant rank $m$ (cf. \cite[Definition II.2.6.1]{Weibel13}).
\begin{theorem}[Boraty{\'n}ski--Davis--Geramita\cite{BDG80}, Estes--Matijevic\cite{EM80}]\label{thm:characterisation}
  Let $R$ be a Noetherian ring satisfying one of the conditions (a), (b), or (c) in \Cref{thm:general}. Then the following are equivalent:
  \begin{enumerate}
  \item Every trace $0$ matrix in $\Mat_2(R)$ is a commutator,
  \item $R$ is $T_2^3$,
  \item $R$ is an OP-ring,
  \item $R$ is locally OP and every finitely generated projective $R$-oriented module is free,
  \item $R$ is locally OP, $SK_0(R)=0$ and every finitely generated stably-free projective $R$-module is free, and 
  \item Every maximal ideal of $R$ is generated by two elements and every finitely generated stably-free projective $R$-module is free.
  \end{enumerate}
\end{theorem}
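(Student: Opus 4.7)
The plan is to combine the decomposability criterion of \Cref{lem:2x2decomposable} with \Cref{thm:general}, \Cref{thm:local}, and the lemmas of \cite{BDG80}. First, (1)$\iff$(2) is immediate from \Cref{lem:2x2decomposable}. The direction (3)$\Rightarrow$(2) is by definition of OP-ring, and (3)$\iff$(4) is the statement of \Cref{thm:general}; the reverse implication (2)$\Rightarrow$(3) also follows from \Cref{thm:general}, since under any of the hypotheses (a), (b), (c) the characterisation in (4) is independent of $n$, so $T_2^3$ already forces OP. Thus (1)--(4) are equivalent.

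For (4)$\iff$(5), the key observation is that every stably-free projective module is automatically $R$-oriented. Indeed, if $M\oplus R^m \cong R^n$, then taking top exterior powers (equivalently, determinant line bundles of projectives of constant rank) yields
\[
{\textstyle\bigwedge^{n-m}} M \;\cong\; {\textstyle\bigwedge^n}R^n \otimes \bigl({\textstyle\bigwedge^m}R^m\bigr)^{-1} \;\cong\; R.
\]
Assuming (4), every oriented projective $P$ of rank $m$ is free, whence $[P]-[R^m]=0$ in $K_0(R)$ and $SK_0(R)=0$; moreover stably-free projectives, being oriented, are free. Conversely, under (5), for an oriented projective $P$ of rank $m$ the class $[P]-[R^m]$ lies in $SK_0(R)=0$, so $P$ is stably isomorphic to $R^m$, hence stably-free, hence free by hypothesis.

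For (5)$\iff$(6), I would apply \Cref{thm:local} to translate ``locally OP'' into ``$\fm R_\fm$ is 2-generated for every maximal ideal $\fm$''. Under (6), each $\fm$, and a fortiori each $\fm R_\fm$, is 2-generated, so $R$ is locally OP; the vanishing of $SK_0(R)$ and the stably-free-to-free condition of (5) then follow from the lemmas of \cite{BDG80} (the same ones referenced in the remark following \Cref{cor:numax}). Conversely, under (5), one must upgrade local 2-generation of $\fm R_\fm$ to 2-generation of $\fm$ itself; this is supplied by \cite[(3) Lemma, (4) Lemma]{BDG80}, whose applicability is ensured by one of the hypotheses (a), (b), or (c).

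The main obstacle is the bookkeeping in (5)$\iff$(6), specifically the controlled lifting from generators of $\fm R_\fm$ to generators of $\fm$. Once \Cref{thm:general} is taken as a black box for the equivalence of (2)--(4), this is the step where the technical hypotheses (a)--(c) and the commutative-algebra input of \cite{BDG80} are genuinely needed.
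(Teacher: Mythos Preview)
Your argument for (4)$\iff$(5) is fine and matches the paper. The gaps are in (2)$\Rightarrow$(3) and (5)$\Rightarrow$(6). For the first, \Cref{thm:general} asserts OP $\iff$ (locally OP and every oriented module free); it says nothing about $T_2^3$ by itself. Observing that ``(4) is independent of $n$'' does not show that $T_2^3$ implies (4): you would still have to prove that $T_2^3$ forces every finitely generated oriented module to be free, which is exactly the nontrivial content. For the second, you misapply \cite[(3) Lemma, (4) Lemma]{BDG80}: those lemmas take $T_2^3$ as hypothesis and conclude that every maximal ideal is $2$-generated and that stably-free modules are free. They give (2)$\Rightarrow$(6), not a lifting of local $2$-generation to global $2$-generation under (5). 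Indeed, locally OP alone only yields $\nu(\fm)\le 3$ at regular points (cf.\ \cite{dg77}), so a direct (5)$\Rightarrow$(6) is not available from these lemmas.

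The paper sidesteps both issues by proving the one-directional cycle (2)$\Rightarrow$(6)$\Rightarrow$(5)$\Rightarrow$(4)$\Rightarrow$(3)$\Rightarrow$(2) rather than pairwise equivalences. Here (2)$\Rightarrow$(6) is exactly \cite[(3) Lemma, (4) Lemma]{BDG80}; (6)$\Rightarrow$(5) uses \Cref{thm:local} for locally OP, then Krull's height theorem gives $\dim R\le 2$, whence $SK_0(R)=0$ by \cite[(5) Proposition]{BDG80}; (5)$\Rightarrow$(4) uses \cite[(6) Lemma]{BDG80} (oriented projective $\Rightarrow$ stably-free); and (4)$\Rightarrow$(3) is \Cref{thm:general} combined with \Cref{cor:orientedisprojective} (needed since (4) says ``projective oriented'' while \Cref{thm:general} says merely ``oriented''). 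With this ordering, neither (2)$\Rightarrow$(3) nor (5)$\Rightarrow$(6) ever has to be established directly.
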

\begin{proof}
  (1) is equivalent to (2) by \Cref{lem:2x2decomposable}.
  (3) implies (2) follows from the definition of an OP-ring. (4) implies (3) is \Cref{thm:general} with \Cref{cor:orientedisprojective}.

  For (5) implies (4), suppose $M$ is an $R$-oriented module. Then for all prime ideal $\fp$, $M_\fp$ is $R_\fp$-oriented module, and since $R_\fp$ is OP, every $R_\fp$-oriented module is free. Hence $M$ is an $R$-oriented projective module. Hence it is stably-free projective by \cite[(6) Lemma]{BDG80} and hence free by (5).

  For (6) implies (5), if every maximal ideal of $R$ is generated by 2 elements, then every maximal ideal is locally generated by at most two elements. Hence every localisation is OP by \Cref{thm:local}. Now for all maximal ideal $\fm$, $\fm R_\fm$ is generated by at most 2 elements, so $\dim R_\fm\le 2$ by Krull's Height theorem \cite[Theorem 13.5]{Matsumura}. Hence $\dim R\le 2$, and so by \cite[(5) Proposition]{BDG80}, $SK_0(R) = 0$.

(2) implies (6) follows from \cite[(3) Lemma, (4) Lemma]{BDG80}.
\end{proof}

\begin{corollary}\label{cor:lowdim}
  Let $R$ be a locally OP Noetherian ring. Then
  \begin{enumerate}
  \item $\dim R\le 2$,
  \item If $\dim R=0$ then $R$ is an OP-ring,
  \item If $\dim R=1$ and if $R$ satisfies one of the conditions (a), (b), or (c) in \Cref{thm:general}, then $R$ is an OP-ring.
  \end{enumerate}
\end{corollary}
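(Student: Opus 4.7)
The plan is to deduce each of (1)--(3) from one of the three structural theorems about OP-rings already established above: \Cref{thm:local} for the local characterization, \Cref{thm:semilocal} for the semilocal reduction, and \Cref{thm:characterisation} for the global characterization.

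For (1), I would fix a maximal ideal $\fm \subset R$ and use the hypothesis that $R_\fm$ is an OP-ring. \Cref{thm:local} then forces $\fm R_\fm$ to be generated by two elements, so Krull's height theorem gives $\dim R_\fm = \height(\fm R_\fm) \le 2$. Taking the supremum over all maximal ideals yields $\dim R \le 2$. Part (2) is equally short: a Noetherian ring of Krull dimension $0$ is Artinian and therefore has only finitely many maximal ideals, hence is semilocal, so \Cref{thm:semilocal} promotes the local OP property directly to the global OP property.

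The substance is in (3). Here I would verify condition (4) of \Cref{thm:characterisation}, namely that $R$ is locally OP (given) and every finitely generated projective $R$-oriented module is free. Since $\Spec R$ is connected (the standing assumption of this section), any such module $P$ has a constant rank $n \ge 1$. If $n = 1$ then $P = \bigwedge^1 P \cong R$ is free directly. If $n \ge 2$, then $\rank P > \dim R = 1$, and I would iterate Serre's splitting theorem to peel off free summands and decompose $P \cong R^{n-1} \oplus L$ with $L$ a line bundle. Taking top exterior powers gives $L \cong \det P \cong R$, hence $P \cong R^n$ is free. With condition (4) verified, \Cref{thm:characterisation} (whose hypothesis on $R$ is supplied in (3)) implies that $R$ is an OP-ring.

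The main potential obstacle lies in choosing the right equivalent form in \Cref{thm:characterisation}. The most naive route would be condition (6), requiring every maximal ideal of $R$ to be $2$-generated; but local OP only gives $2$-generation after localizing at each $\fm$, and lifting a local $2$-generator set to a global one in dimension $1$ would require a Forster--Swan type bound. Going through condition (4) together with Serre's splitting theorem neatly sidesteps this issue, reducing the global problem to the well-understood structure of projective modules over one-dimensional Noetherian rings.
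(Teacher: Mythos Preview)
Your proposal is correct and follows essentially the same approach as the paper: parts (1) and (2) are identical, and for (3) both arguments reduce an oriented projective module to a line bundle via a splitting theorem in dimension $1$ and then identify the line bundle with the determinant $\cong R$. The only cosmetic differences are that the paper routes through \Cref{thm:general} together with \Cref{cor:orientedisprojective} (and cites Bass cancellation rather than Serre splitting), whereas you go through condition (4) of \Cref{thm:characterisation} directly; your route is marginally more streamlined but substantively the same.
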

\begin{proof}
  If $R$ is locally OP Noetherian ring, then by \Cref{thm:local}, $\fm R_\fm$ is generated by 2 elements for any maximal ideal $\fm$ of $R$. Hence $\dim R_\fm \le 2$ by Krull's Height theorem \cite[Theorem 13.5]{Matsumura}, and so $\dim R\le 2$. 

  (2) follows from \Cref{thm:semilocal} since if $R$ is Noetherian and dimension $0$, then it is a semilocal ring.

  For (3), let $M$ be an $R$-oriented module, that is $\bigwedge^nM=R$ for some $n\ge 1$. Then by \Cref{cor:orientedisprojective} $M$ is projective 
and so by Bass cancellation theorem \cite[Theorem I.2.3]{Weibel13}, $M\iso P\oplus R^{n-1}$ for some projective module $P$ of rank 1. Now
 \[R = {\textstyle\bigwedge^nM= \bigwedge^n}\left(P\oplus R^{n-1}\right)= \bigoplus_{k=0}^n {\textstyle\bigwedge^kP} \tensor {\textstyle\bigwedge^{n-k}R^{n-1}} = P\tensor R = P,\]
 so $M$ is free. Hence every finitely generated $R$-oriented module is free, so $R$ is an OP-ring by \Cref{thm:general}.
\end{proof}

\begin{example}
  Let $A$ be a PID with quotient field $F$ and $K/F$ be a finite field extension. An $A$-order $R$ in $K$ is an $A$-subalgebra of $K$ which is finitely generated as an $A$-module and such that $F\tensor_AR=K$. In \cite[Theorem 3.4]{Clark18}, Clark showed that if $N:=[K:F]$, then there exists an $A$-order $R$ in $K$ such that $R$ admits a maximal ideal $\fm\subset R$ with $N$ as the minimal cardinality of a set of generators. If $N\ge 3$, then by \Cref{thm:characterisation} (6), $R$ is not an OP ring. In particular if $K/\Q$ is a number field of degree at least $3$, then $K$ admits a $\Z$-order which is not an OP ring.

  Note that if $R$ is an $A$-order in a quadratic extension $K/F$, then every ideal of $R$ is a free $A$-module of rank $2$, so every maximal ideal of $R$ is generated by at most $2$ elements. Hence for any maximal ideal $\fm\subset R$, $\fm R_\fm$ is generated by at most $2$ elements, so by \Cref{thm:local}, $R$ is locally OP. Moreover, $R$ is also a dimension $1$ domain, so by \Cref{cor:lowdim} (3)(a), $R$ is an OP ring.
\end{example}

From here on, we will work with the case where the ring $R$ is a $k$-algebra for some field $k$. As we will see later, this will allow us to utilise the geometry of $\Spec(R)$ as a $k$-variety.
The next theorem gives examples of $k$-algebras $R$ where every stably-free projective module is free.
\begin{theorem}\label{thm:stablyfreeisfree}
  Let $k$ be a ring, and $R$ be a finitely generated $2$-dimensional $k$-algebra. Then every finitely generated stably-free projective $R$-module is free if any of the following is satisfied:
  \begin{enumerate}
  \item $k$ is an algebraically closed field \cite[Theorem 1]{MS76}.
  \item $k$ is an infinite perfect field with $\ch k\ne 2$ and the cohomological dimension of $k$ is at most $1$ \cite[Remark 4.2]{Bhatwadekar03}.
  \item $k$ is a real closed field and all $k$-points on $\Spec(R)$ lie on a closed subscheme of dimension $\le 1$ \cite[Theorem 3.1]{MS76}.
  \item $k=\Z$ or $k=\F_q$ for any prime power $q$ \cite[Corollary 2.5]{KMR88}.
  \end{enumerate}
\end{theorem}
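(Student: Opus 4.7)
The plan is to verify each of the four cases by appealing to the cited reference; \Cref{thm:stablyfreeisfree} is a compendium assembled from the literature, and each item (1)--(4) is precisely the main result (or an immediate consequence thereof) of the indicated paper. The unifying strategy is to reduce the problem to low-rank stably-free modules: since $\dim R=2$, Bass's cancellation theorem (used already in the proof of \Cref{cor:lowdim}) implies that every stably-free projective $R$-module of rank strictly greater than $2$ is free, so in each case it suffices to handle ranks $1$ and $2$.

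For rank $1$, a stably-free projective $P$ with $P\oplus R^m\iso R^{m+1}$ corresponds to a unimodular row of length $m+1$, and $P$ is free precisely when that row can be completed to an invertible matrix over $R$. For rank $2$, the relevant obstructions live in invariants such as the Euler class group or the second symplectic $K$-group, which must be shown to vanish. Each cited paper carries out exactly this vanishing in its own setting: Mohan Kumar--Murthy \cite{MS76} use affine algebraic geometry over an algebraically closed field for case (1); Bhatwadekar \cite{Bhatwadekar03} exploits $\operatorname{cd}(k)\le 1$, together with $k$ perfect, infinite, and of characteristic $\ne 2$, to trivialize the obstruction classes in case (2); the real-closed variant in \cite{MS76} handles case (3) once the hypothesis on the dimension of the locus of $k$-points forces the relevant real Euler class to vanish; and \cite[Corollary 2.5]{KMR88} treats $k=\Z$ and $k=\F_q$ via arithmetic methods specific to those base rings (completion, patching, and finiteness of class groups).

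The main obstacle --- and the reason the theorem must be stated as four separate cases --- is that there is no uniform proof available: beyond the common reduction to rank $\le 2$ via Bass cancellation, the geometric, cohomological, real-algebraic, and arithmetic techniques required for (1)--(4) are genuinely different, and each depends on a distinct deep theorem about projective modules over $2$-dimensional affine schemes. The content of the statement as it is used in this paper is therefore just the verification that the quoted references apply verbatim under our standing hypothesis that $R$ is a finitely generated $2$-dimensional $k$-algebra, so the proof reduces to citing them in each case.
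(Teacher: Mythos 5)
The paper gives no proof of this theorem at all: it is stated as a compendium of known results, with the inline citations to \cite{MS76}, \cite{Bhatwadekar03}, and \cite{KMR88} serving as the entire justification. Your core approach --- ``each item is precisely a result from the cited paper, so the proof is to check that the hypotheses match and cite'' --- is therefore exactly what the paper does, and is the right way to handle a statement of this kind.

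That said, most of your write-up is speculation about \emph{how} the cited papers prove their theorems, which is not needed and contains some inaccuracies. The reference tagged MS76 is Murthy--Swan, not Mohan Kumar--Murthy. Your framing of the rank-$1$ case is slightly garbled: a unimodular row of length $m+1$ gives a stably-free module of \emph{corank} $1$ (rank $m$), not of rank $1$; a rank-$1$ stably-free projective over any commutative ring is automatically free by taking top exterior powers, so after Bass cancellation (which disposes of rank $\ge 3$ since $\dim R = 2$) the only genuinely nontrivial case is rank $2$. None of this reduction appears in the paper, which simply defers to the literature. So the proposal is correct in spirit but carries extra, partly incorrect, narrative that the paper itself does not assert and does not need; the only verification the paper actually requires is that the hypothesis ``$R$ is a finitely generated $2$-dimensional $k$-algebra'' matches the setting of each cited result.
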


\begin{corollary}\label{cor:generatedby2}
  Let $k$ be a ring and $R$ be a Noetherian reduced finitely generated $2$-dimensional $k$-algebra satisfying one of the conditions (1)-(4) in \Cref{thm:stablyfreeisfree}. Then every trace $0$ matrix in $\Mat_2(R)$ is a commutator if and only if every maximal ideal of $R$ can be generated by two elements.
\end{corollary}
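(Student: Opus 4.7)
The plan is to simply invoke the equivalence (1) $\Leftrightarrow$ (6) in \Cref{thm:characterisation} and dispose of the stably-free hypothesis using \Cref{thm:stablyfreeisfree}. Since $R$ is assumed reduced, it satisfies condition (a) in the hypothesis of \Cref{thm:general}, so \Cref{thm:characterisation} applies to $R$. That theorem tells us that every trace $0$ matrix in $\Mat_2(R)$ is a commutator if and only if every maximal ideal of $R$ is generated by two elements \emph{and} every finitely generated stably-free projective $R$-module is free.

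Now observe that $R$ is a finitely generated $2$-dimensional $k$-algebra satisfying one of (1)--(4) of \Cref{thm:stablyfreeisfree}. In each of these cases, \Cref{thm:stablyfreeisfree} guarantees unconditionally that every finitely generated stably-free projective $R$-module is free. Hence the second half of condition (6) of \Cref{thm:characterisation} holds automatically, and the equivalence collapses to the single condition that every maximal ideal of $R$ is generated by two elements.

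There is no real obstacle; the only thing to verify is that \Cref{thm:characterisation} is applicable, which follows from $R$ being reduced. In summary, the proof is one line invoking \Cref{thm:characterisation} (1) $\Leftrightarrow$ (6) together with \Cref{thm:stablyfreeisfree} to eliminate the stably-free condition.
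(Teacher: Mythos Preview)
Your proposal is correct and matches the paper's own proof essentially line for line: the paper simply says the result follows from applying \Cref{thm:stablyfreeisfree} to the equivalence of (1) and (6) in \Cref{thm:characterisation}, which is exactly what you do (and you even spell out why \Cref{thm:characterisation} applies, via reducedness giving condition (a)).
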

\begin{proof}
  Follows from applying \Cref{thm:stablyfreeisfree} to the equivalence of (1) and (6) in \Cref{thm:general}.
\end{proof}

We now state an application of \Cref{thm:stablyfreeisfree} (1) which will be used in \Cref{ex:hypersurface} and \Cref{ex:godeaux}.
Recall that for a variety $X/k$, the Chow group $A_0(X)$ is the group of zero cycles of degree $0$ modulo rational equivalence.
For a projective variety $X/k$, we have the \emph{Albanese map}
\[ AJ_X \maps{A_0(X)}{\Alb_{X/k}(k)}, \]
where $\Alb_{X/k}$ is the Albanese variety of $X/k$ (see \cite[Section 3]{ss03} for more details about the Albanese map). We denote the kernel as $SA_0(X) := \ker AJ_X$, which is also denoted as $T(X)$ or $F^2CH_0(X)$ in some literature.
\begin{theorem}\label{cor:sa}
  Let $k$ be an algebraically closed field and suppose that $R$ is a $2$-dimensional regular domain that is a finitely generated $k$-algebra. If $\Spec(R)$ is an open affine subvariety of a regular projective surface $X/k$ with finite $SA_0(X)$, then $R$ is an OP-ring, and every trace $0$ matrix in $\Mat_2(R)$ is a commutator.
\end{theorem}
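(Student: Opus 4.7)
The plan is to verify condition (5) of \Cref{thm:characterisation}, namely that $R$ is locally OP, that $SK_0(R)=0$, and that every finitely generated stably-free projective $R$-module is free. Local OP-ness is immediate from \Cref{cor:regularlocalop} since $R$ is regular of dimension~$2$, and stably-free-implies-free is furnished by \Cref{thm:stablyfreeisfree}(1) applied to the finitely generated $2$-dimensional $\bar k$-algebra $R$. Hence the entire content of the theorem is the vanishing $SK_0(R)=0$.

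The next step is to translate this $K$-theoretic vanishing into a Chow-theoretic one. For the smooth affine surface $U:=\Spec(R)$ over the algebraically closed field $k$, the second Chern class induces an isomorphism $SK_0(R)\cong CH_0(U)$ (this is the standard identification $SK_0=F^2K_0\cong CH^2$ on a smooth surface). I would then aim to show $CH_0(U)=0$ by exploiting the localization sequence
\[CH_0(Z)\xrightarrow{\;i_*\;}CH_0(X)\xrightarrow{\;j^*\;}CH_0(U)\to 0,\]
where $Z:=X\setminus U$ carries the reduced structure, and by proving that $i_*$ is surjective. Writing $CH_0(X)=\Z\oplus A_0(X)$ relative to a fixed $k$-point, the $\Z$ summand lies in the image of $i_*$ because $k=\bar k$ makes every closed point of $Z$ of degree one over $k$.

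For the $A_0(X)$ summand, affineness of $U$ in the projective $X$ lets me invoke Goodman's theorem to produce an effective ample divisor $D=\sum n_iZ_i$ supported on $Z$; then the weak-Lefschetz surjectivity for the Albanese gives $\bigoplus_i J(Z_i^{\mathrm{norm}})(k)\twoheadrightarrow \Alb_{X/k}(k)$. These Jacobian-to-$A_0(X)$ maps are compatible with the Albanese surjection $A_0(X)\to \Alb_{X/k}(k)$, so combined with the identification $A_0(X)\cong \Alb_{X/k}(k)$---coming from the hypothesis that $SA_0(X)$ is finite together with Roitman's torsion-freeness theorem (a finite torsion-free abelian group is trivial, forcing $SA_0(X)=0$)---the image of $i_*$ covers all of $A_0(X)$ as well. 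Combining the two summands, $i_*$ is surjective, $CH_0(U)=0$, and therefore $SK_0(R)=0$, completing the verification of \Cref{thm:characterisation}(5).

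The step I expect to require the most care is the passage from ``$SA_0(X)$ finite'' to ``$SA_0(X)=0$''. Roitman's theorem and its positive-characteristic extensions (due to Milne and Bloch) show that $SA_0$ is torsion-free in the relevant settings, so that a finite $SA_0$ must vanish, but one should state the characteristic hypotheses explicitly. The weak-Lefschetz surjectivity onto $\Alb_{X/k}$, while classical, must also be applied with care to the normalization of the possibly reducible and singular ample divisor $D$ supported on $Z$, since the natural Albanese map factors through the normalization and the Jacobians of its components.
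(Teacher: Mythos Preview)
Your overall strategy is identical to the paper's: verify condition~(5) of \Cref{thm:characterisation}, obtaining ``locally OP'' from \Cref{cor:regularlocalop} and ``stably-free $\Rightarrow$ free'' from \Cref{thm:stablyfreeisfree}(1). The only difference is in how $SK_0(R)=0$ is established. The paper dispatches this in one line by citing \cite[Theorem~3]{MS76} (Murthy--Swan) for the implication ``$SA_0(X)$ finite $\Rightarrow SK_0(R)=0$''; you instead sketch a proof of that implication via the identification $SK_0(R)\cong CH_0(U)$, the localization sequence, Goodman's theorem, weak Lefschetz for the Albanese, and Roitman's theorem. Your sketch is correct, and the paper itself records the identification $SK_0(R)=A_0(\Spec R)$ (with the same reference \cite[Theorem~4.2(d)]{MS76}) later in the section, so you are not introducing foreign machinery.

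Two remarks on the comparison. First, your route passes through ``$SA_0(X)$ finite $\Rightarrow SA_0(X)=0$'' via Roitman; note that Murthy--Swan predates Roitman's theorem, so their original argument handles finiteness directly (one shows $CH_0(U)$ is a quotient of $SA_0(X)$, hence finite, and separately that $CH_0(U)$ is divisible by moving points along curves meeting $Z$). Either works. Second, your weak-Lefschetz step is correct but, as you anticipate, needs the version for a possibly singular ample effective divisor: the surjection $\pi_1(D)\twoheadrightarrow\pi_1(X)$ from SGA~2 together with connectedness of the ample $D$ yields the surjection $\bigoplus_i J(Z_i^{\mathrm{norm}})(k)\twoheadrightarrow\Alb_{X/k}(k)$ after tracking base points across the components. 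This is the only place where more than a citation is required in your version, and it goes through.
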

\begin{proof}
  By \Cref{cor:regularlocalop}, $R$ is a locally OP ring, and by \cite[Theorem 3]{MS76}, $SA_0(X)$ finite implies $SK_0(R)=0$. Every stably-free projective $R$-module is free by  \Cref{thm:stablyfreeisfree} (1), so $R$ satisfies \Cref{thm:characterisation} (5). Hence $R$ is an OP-ring and every trace $0$ matrix in $\Mat_2(R)$ is a commutator.
\end{proof}

When $k=\widebar{\F}_p$,  we have the following theorem.
\begin{theorem}\label{cor:positivecharop}
  Let $R$ be a locally OP dimension $2$ finitely generated $\widebar{\F}_p$-algebra that satisfies one of the conditions (a), (b), or (c) in \Cref{thm:general}. Then $R$ is an OP-ring and every trace $0$ matrix in $\Mat_2(R)$ is a commutator.
\end{theorem}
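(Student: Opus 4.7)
The plan is to verify the equivalent condition (5) of \Cref{thm:characterisation} (applicable because $R$ satisfies one of (a), (b), or (c) of \Cref{thm:general}): namely that $R$ is locally OP, that $SK_0(R) = 0$, and that every finitely generated stably-free projective $R$-module is free. The first of these is a direct hypothesis, and the third follows at once from \Cref{thm:stablyfreeisfree}(1), since $\widebar{\F}_p$ is algebraically closed and $R$ is a $2$-dimensional finitely generated $\widebar{\F}_p$-algebra. The whole content of the theorem thus reduces to proving $SK_0(R)=0$.

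For this, I would follow exactly the strategy of the proof of \Cref{cor:sa}. Using Nagata compactification together with Lipman's resolution of singularities for excellent surfaces, embed $\Spec(R)$ as an open subvariety of some smooth projective surface $X/\widebar{\F}_p$. The theorem of Murthy-Swan \cite[Theorem~3]{MS76} cited in the proof of \Cref{cor:sa} then reduces the vanishing $SK_0(R)=0$ to the finiteness of $SA_0(X)$.

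The crucial new input over $\widebar{\F}_p$ is that $SA_0(X)=0$ for every smooth projective surface $X/\widebar{\F}_p$. I would prove this by descending $X$ to a model $X_0$ defined over some finite subfield $\F_q\subset\widebar{\F}_p$. By the higher unramified class field theory of Kato-Saito, for each $n\ge 1$ the group $A_0((X_0)_{\F_{q^n}})$ is finite and the Albanese map $A_0((X_0)_{\F_{q^n}})\to \Alb_{X_0}(\F_{q^n})$ is an isomorphism, so $SA_0((X_0)_{\F_{q^n}})=0$ for every $n$. Since $SA_0(X)$ is the colimit of these groups along the tower $\F_q\subset \F_{q^2}\subset\cdots\subset \widebar{\F}_p$, one obtains $SA_0(X)=0$, which is the sought-after input.

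The main obstacle I anticipate is handling the non-regular case of $R$ (permitted under hypotheses (b) or (c)): one must verify that the Murthy-Swan comparison between $SK_0(R)$ and $SA_0(X)$ still goes through despite finitely many singular closed points of $\Spec(R)$. This is precisely why the hypotheses (a), (b), or (c) of \Cref{thm:general} appear in the statement, and one might have to resolve the singularities of $\Spec(R)$ in the above Nagata compactification step and separately verify that this modification does not change $SK_0$, using that only finitely many maximal ideals are affected.
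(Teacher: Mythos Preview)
Your approach is genuinely different from the paper's and is sound in the regular case, but the gap you yourself flag for non-regular $R$ is real and not easily patched along the lines you suggest.

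The paper does not go through $SK_0(R)=0$ at all. Instead it verifies condition (4) of \Cref{thm:characterisation} directly, using a purely algebraic splitting theorem: by \cite[Theorem~6.4.1]{KS07}, every rank-$2$ projective module over a finitely generated $\widebar{\F}_p$-algebra of dimension $2$ has a free rank-$1$ direct summand. Given an oriented projective $P$ of rank $n$, Bass cancellation writes $P\iso Q\oplus R^{n-2}$ with $Q$ of rank $2$; the splitting theorem gives $Q\iso Q'\oplus R$ with $Q'$ of rank $1$; and then $\bigwedge^n P \iso Q'$ forces $Q'\iso R$, so $P$ is free. This argument never compactifies, never resolves singularities, and makes no regularity assumption on $R$, so it handles cases (a), (b), (c) uniformly.

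By contrast, your route through $SA_0(X)$ requires embedding $\Spec(R)$ as an open subscheme of a \emph{smooth} projective surface, and the Murthy--Swan comparison $SK_0(R)\hookrightarrow SA_0(X)$ you invoke from \cite{MS76} is stated for regular $R$. When $R$ is merely locally OP (so possibly singular, possibly not a domain), resolving the compactification will blow up the singular points of $\Spec(R)$ itself, and you would need a separate argument that this does not change $SK_0$. That is not automatic, and your sketch does not supply it. So your proposal proves the theorem under the additional hypothesis that $R$ is regular---which, to be fair, is exactly the case used downstream in \Cref{cor:fpalgebraisop}---but not in the generality stated.
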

\begin{proof}
  By \Cref{thm:general}, we only need to show that every finitely generated oriented $R$-module is free. Let $P$ be an oriented module of rank $n$. If $n >2$ then by Bass cancellation theorem \cite[Theorem I.2.3]{Weibel13}, $P\iso Q\oplus R^{n-2}$ where $Q$ is a projective module of rank $2$. By \cite[Theorem 6.4.1]{KS07}, any projective $R$-module of rank $2$ has a non-zero free direct summand, so $Q\iso Q'\oplus R$ for some finitely generated projective $R$-module $Q'$. Now
 \[R = {\textstyle\bigwedge^nP= \bigwedge^n}\left(Q'\oplus R^{n-1}\right)= \bigoplus_{k=0}^n {\textstyle\bigwedge^k}Q' \tensor {\textstyle\bigwedge^{n-k}}R^{n-1} = Q'\tensor R = Q'.\]
 Hence $P$ is free.
\end{proof}

\fpalgebra
\begin{proof}
  $R$ is locally OP by \Cref{cor:regularlocalop}, and $R$ is regular so it is reduced. Hence $R$ is an OP ring by \Cref{cor:positivecharop}, and so every trace $0$ matrix in $\Mat_2(R)$ is a commutator.
\end{proof}

For an algebra over a characteristic $0$ field, we have the following result for a graded $\widebar{\Q}$-algebra.
\begin{theorem}\label{cor:gradedopring}
Let $R=\oplus_{n\ge 0}R_n$ be a $2$-dimensional regular graded domain that is an associative algebra over $R_0=\widebar{\Q}$. Then $R$ is an OP ring, and every trace $0$ matrix in $\Mat_2(R)$ is a commutator.
\end{theorem}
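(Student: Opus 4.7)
The plan is to show that under the given hypotheses, $R$ must be isomorphic to a polynomial ring in two variables over $\widebar{\Q}$, and then invoke Quillen--Suslin together with \Cref{thm:characterisation}. First I would observe that the irrelevant ideal $\mathfrak{m} := R_+ = \oplus_{n \ge 1} R_n$ is a maximal ideal of $R$ with residue field $R_0 = \widebar{\Q}$. Since $R$ is connected graded Noetherian of dimension $2$, one has $\height(\mathfrak{m}) = 2$, and so the regularity hypothesis forces $R_\mathfrak{m}$ to be a regular local ring of dimension $2$. In particular $\dim_{\widebar{\Q}}(\mathfrak{m}/\mathfrak{m}^2) = 2$.

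Next, by graded Nakayama, there exist homogeneous $x, y \in \mathfrak{m}$ lifting a $\widebar{\Q}$-basis of $\mathfrak{m}/\mathfrak{m}^2$ with $\mathfrak{m} = (x, y)$; since $R = \widebar{\Q} \oplus \mathfrak{m}$, these two elements also generate $R$ as a $\widebar{\Q}$-algebra. The induced graded surjection $\widebar{\Q}[X, Y] \twoheadrightarrow R$ sending $X \mapsto x$, $Y \mapsto y$ (with $\deg X = \deg x$, $\deg Y = \deg y$) has prime kernel, since $R$ is a domain, and this kernel must have height $0$ by a dimension count, hence is zero. Thus $R$ is isomorphic, as an ungraded ring, to a polynomial ring in two variables over $\widebar{\Q}$.

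From here the argument is formal. By \Cref{cor:regularlocalop}, $R$ is locally OP, and by Quillen--Suslin every finitely generated projective $\widebar{\Q}[X, Y]$-module is free; in particular every finitely generated $R$-oriented module is free. Since $R$ is a domain and therefore reduced, \Cref{thm:general} applies under condition (a) and yields that $R$ is an OP-ring, and \Cref{lem:2x2decomposable} then upgrades this to the statement about commutators in $\Mat_2(R)$. The only genuinely novel step is the first reduction to a polynomial ring: the combination of the grading and regularity is what pins $R$ down so rigidly, and it is precisely this extra structure that lets us avoid appealing to the Bloch--Beilinson conjecture (which is what is needed for the analogous statement over general $\widebar{\Q}$-algebras in \Cref{thm:bbopring}).
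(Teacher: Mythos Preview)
Your proof is correct, but it takes a genuinely different route from the paper's. The paper argues directly: $R$ is locally OP by \Cref{cor:regularlocalop}, and then a result of Krishna--Srinivas \cite[Theorem~1.2]{KS02} on graded algebras over $\widebar{\Q}$ is invoked to conclude that every finitely generated projective $R$-module is free, whence \Cref{thm:general} finishes. Your argument instead establishes the much sharper structural fact that the hypotheses already force $R \cong \widebar{\Q}[X,Y]$ as a ring, so that Quillen--Suslin suffices and no specialised result about graded $\widebar{\Q}$-algebras is needed.

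The comparison is interesting. Your reduction shows that the theorem, as stated, is not really about $\widebar{\Q}$ at all: the argument that a connected graded Noetherian regular domain of dimension $2$ over a field $k=R_0$ must be a polynomial ring works for any field, and Quillen--Suslin then applies over any $k$. So you have actually proved a stronger statement with more elementary tools. The paper's approach, by contrast, treats the graded hypothesis as a way to access a black-box freeness theorem; this is shorter to write down, and the cited result is more robust (it does not require regularity in the same way), but it obscures the fact that in the regular case the ring is already as simple as possible. One small point: your passage ``in particular every finitely generated $R$-oriented module is free'' tacitly uses that oriented modules over a reduced locally OP ring are projective (\Cref{cor:orientedisprojective}); the paper's proof makes the same elision, so this is not a defect, but you may want to make the citation explicit.
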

\begin{proof}
  Since $R$ is a regular $2$-dimensional ring, $R$ is locally OP by \Cref{cor:regularlocalop}. By \cite[Theorem 1.2]{KS02}, every finitely generated projective $R$-module is free, so every $R$-oriented module is free. Hence $R$ is an OP ring by \Cref{thm:general}, and every trace $0$ matrix in $\Mat_2(R)$ is a commutator.
\end{proof}

Recall the following conjecture (see \cite[Page 267 and Theorem 6.2.1]{KS07} for a discussion about the conjecture).
\begin{conjecture}[Bloch--Beilinson Conjecture]\label{bbconjecture}
  If $X/\widebar{\Q}$ is an irreducible regular projective surface, then $SA_0(X)=0$.
\end{conjecture}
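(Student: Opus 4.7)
The Bloch--Beilinson Conjecture for zero-cycles on surfaces is a famous open problem, so what follows will not be a genuine proof sketch but an outline of the program one would have to carry out. Heuristically the statement is a transcendentality claim: by Mumford's theorem, whenever $p_g(X) > 0$ the Albanese kernel $SA_0(X)$ is uncountable over any algebraically closed field of uncountable transcendence degree, and the conjecture predicts that descending to the countable base field $\widebar{\Q}$ forces it to vanish. The plan therefore has to be to convert countability of $\widebar{\Q}$ into an actual cycle-theoretic vanishing, which no presently known tool is able to do directly.

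The route I would attempt goes through a Chow--K\"unneth decomposition of the motive $h(X) = \bigoplus_{i=0}^{4} h^i(X)$ together with a further splitting $h^2(X) = h^2_{\mathrm{alg}}(X) \oplus h^2_{\mathrm{tr}}(X)$. Under the conjectural Bloch--Beilinson filtration, $SA_0(X) \otimes \Q$ is identified with the Chow group of the transcendental motive $h^2_{\mathrm{tr}}(X)$, so the conjecture reduces to showing that this piece is a \emph{phantom} over $\widebar{\Q}$. First I would invoke Kimura--O'Sullivan finite-dimensionality of the Chow motive of $X$ to gain nilpotence control on correspondences, then spread out any given transcendental $0$-cycle to an arithmetic family $\mathcal{X}\to\Spec\cO_K$ and try a weight-filtration or $\ell$-adic realization argument to show that it is torsion, and finally kill the torsion using Roitman's theorem.

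The hard part -- and in fact the insurmountable one with current technology -- is that every one of these steps is itself a major open problem. Finite-dimensionality of the Chow motive is established only for surfaces dominated by products of curves and a handful of further classes, and no currently known technique produces a vanishing theorem for Chow groups of transcendental motives over $\widebar{\Q}$; even Bloch's much weaker conjecture for the $p_g = 0$ case remains open in general for surfaces of general type. Accordingly, the realistic ``proposal'' is to treat \Cref{bbconjecture} as a black box, import it as a hypothesis, and apply it through \Cref{cor:sa} to deduce the conditional OP-ring statement for finitely generated regular $2$-dimensional $\widebar{\Q}$-algebras alluded to in the introduction; an unconditional proof is out of reach of present-day motivic technology.
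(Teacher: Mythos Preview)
Your assessment is correct: the paper does not prove \Cref{bbconjecture} at all---it is stated purely as a conjecture (with a reference to \cite{KS07} for discussion) and then invoked as a hypothesis in \Cref{thm:bbopring}. There is no ``paper's own proof'' to compare against, and your proposal to treat the statement as a black box and feed it into \Cref{cor:sa} (equivalently, into \cite[Theorem 3]{MS76} to get $SK_0(R)=0$) is exactly what the paper does.
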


\begin{theorem}\label{thm:bbopring}
  Assume the Bloch--Beilinson conjecture. Then every finitely generated dimension $2$ regular $\widebar{\Q}$-algebra $R$ is an OP-ring. In particular, every trace $0$ matrix in $\Mat_2(R)$ is a commutator.
\end{theorem}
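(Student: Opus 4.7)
The plan is to reduce to \Cref{cor:sa} by finding a smooth projective completion of $\Spec(R)$ and then invoking the Bloch--Beilinson conjecture on the resulting surface.

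First, I would observe that since the standing assumption of the section is that $\Spec(R)$ is connected, and since a connected regular Noetherian scheme is integral, our ring $R$ is in fact a domain. Thus $\Spec(R)$ is a smooth connected affine surface over $\widebar{\Q}$. I would then apply Hironaka's resolution of singularities (valid in characteristic $0$) to any projective closure of $\Spec(R)$: this yields a smooth irreducible projective surface $X/\widebar{\Q}$ together with an open immersion $\Spec(R)\hookrightarrow X$, since resolution is an isomorphism over the smooth locus, which contains $\Spec(R)$.

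Next, I would invoke the Bloch--Beilinson conjecture (\Cref{bbconjecture}) for the irreducible regular projective surface $X/\widebar{\Q}$ to conclude $SA_0(X)=0$. In particular $SA_0(X)$ is finite, which is exactly the hypothesis needed to apply \Cref{cor:sa}. Since $R$ is a $2$-dimensional regular finitely generated $\widebar{\Q}$-algebra sitting as an open affine subvariety of the regular projective surface $X$ with $SA_0(X)$ finite, \Cref{cor:sa} immediately yields that $R$ is an OP-ring. The final assertion that every trace $0$ matrix in $\Mat_2(R)$ is a commutator is then the equivalence $T_2^3\Leftrightarrow\text{commutator property}$ recorded in \Cref{lem:2x2decomposable}, or equivalently the last clause of \Cref{cor:sa}.

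The main (and really only) obstacle here is the application of Bloch--Beilinson itself, which is an assumed conjecture rather than something to be proved; the rest is a routine chain of previously established results. One minor technical point worth care is justifying the existence of the smooth projective completion, but since we are in characteristic $0$ and dimension $2$ this follows from Hironaka's theorem (or even from the more classical theory of resolution of surface singularities). No other step of the argument requires any calculation beyond citing \Cref{cor:regularlocalop} (for local OP), \Cref{cor:sa} (for the $SK_0$--oriented-module mechanism via \Cref{thm:stablyfreeisfree}(1) which needs $k=\widebar{\Q}$ algebraically closed), and \Cref{lem:2x2decomposable} (for the commutator reformulation).
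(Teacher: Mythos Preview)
Your proposal is correct and follows essentially the same route as the paper: produce a smooth projective model $X/\widebar{\Q}$ of $\Spec(R)$, invoke Bloch--Beilinson to get $SA_0(X)=0$, and then feed this into the $SK_0$/stably-free machinery. The only cosmetic difference is that you package the endgame via \Cref{cor:sa} (which requires the open immersion you carefully construct with Hironaka), whereas the paper unpacks that corollary and appeals directly to \cite[Theorem~3]{MS76}, \Cref{thm:stablyfreeisfree}(1), and \Cref{thm:characterisation}(5).
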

\begin{proof}
  For any $R$ as in the theorem, there exists an irreducible regular projective surface $X/\widebar{\Q}$ birational to $\Spec(R)$. By \Cref{bbconjecture}, $SA_0(X)=0$, so by \cite[Theorem 3]{MS76}, $SK_0(R)=0$. By \Cref{thm:stablyfreeisfree} (1), every stably-free projective module is free. Finally,  $R$ is locally OP since $R$ is regular and dimension 2. Hence $R$ satisfies \Cref{thm:characterisation} (5) so $R$ is an OP-ring, and every trace $0$ matrix in $\Mat_2(R)$ is a commutator.
\end{proof}

In contrast to the case of $\widebar{\Q}$-algebras, the following theorem gives examples of dimension $2$ regular $\C$-algebras which are not OP-rings.
\begin{theorem}\label{cor:infchownotop}
  Let $X/\C$ be an irreducible regular proper surface, with $H^0(X, \Omega^2_{X/\C})\ne 0$, and let $\Spec(R)$ be a non-empty open affine subvariety of $X$. Then $R$ is not an OP ring and there is a trace $0$ non-commutator in $\Mat_2(R)$.
\end{theorem}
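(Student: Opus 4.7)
The plan is to reduce the statement to showing $SK_0(R) \ne 0$, and then apply the equivalence (3)$\Leftrightarrow$(5) of \Cref{thm:characterisation}. Since $\Spec R$ is a non-empty affine open in the regular projective surface $X/\C$, the ring $R$ is a reduced regular finitely generated $\C$-algebra of dimension $2$. Consequently $R$ is locally OP by \Cref{cor:regularlocalop}, and every finitely generated stably-free projective $R$-module is free by \Cref{thm:stablyfreeisfree}(1). Thus under hypothesis (a) of \Cref{thm:general}, condition (5) of \Cref{thm:characterisation} collapses to the single requirement $SK_0(R) = 0$. If I can show $SK_0(R) \ne 0$, then $R$ is not OP, so in particular not $T_2^3$, and \Cref{lem:2x2decomposable} produces a trace $0$ non-commutator in $\Mat_2(R)$.

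To get at $SK_0(R)$, the first input will be Mumford's theorem: for a smooth projective surface $X/\C$ with $p_g(X) := \dim_\C H^0(X, \Omega^2_{X/\C}) > 0$, the group $SA_0(X)$ is ``infinite-dimensional'' in Mumford's sense, in particular uncountable. Thus the hypothesis on $H^0(X,\Omega^2_{X/\C})$ furnishes a large supply of degree-$0$ zero-cycles on $X$ lying in the kernel of the Albanese map but not rationally equivalent to zero.

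Next I would transport this non-triviality from $X$ down to the affine open $\Spec R$ via the localization exact sequence for Chow groups of zero-cycles
\[
CH_0(Z) \longrightarrow CH_0(X) \longrightarrow CH_0(\Spec R) \longrightarrow 0,
\]
where $Z := X \setminus \Spec R$. Since $\dim Z \le 1$, the group $CH_0(Z)$ is countably generated, so its image in $CH_0(X)$ is a countable subgroup. Because $CH_0(X)$ contains the uncountable group $SA_0(X)$, the degree-zero part of $CH_0(\Spec R)$ is still uncountable. Combining this with the Murthy--Swan identification of $SK_0$ of a smooth affine surface over $\C$ with the degree-zero Chow group of zero cycles --- the same circle of ideas underlying \cite{MS76} and \Cref{cor:sa} --- this forces $SK_0(R) \ne 0$.

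The main obstacle in this plan is upgrading the Murthy--Swan input. \Cref{cor:sa} used only the one-sided implication ``$SA_0(X)$ finite $\Rightarrow$ $SK_0(R) = 0$''; what is needed here is the converse direction that an uncountable $SA_0(X)$ survives, modulo the countable contribution from $CH_0(Z)$, inside $SK_0(R)$. Once that identification is in place, the uncountability from Mumford's theorem descends to $SK_0(R)$, which is therefore non-zero, and \Cref{thm:characterisation} closes the argument.
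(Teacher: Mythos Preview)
Your overall strategy is the same as the paper's: reduce to $SK_0(R)\ne 0$ via \Cref{thm:characterisation}(5), and identify $SK_0(R)$ with $A_0(\Spec R)$ via Murthy--Swan. The paper simply cites a black-box result (\cite[Corollary 1]{KS10}) for $A_0(\Spec R)\ne 0$ and \cite[Theorem 4.2(d)]{MS76} for the identification $A_0(\Spec R)=SK_0(R)$; you instead try to prove the first step by hand from Mumford's theorem and the localization sequence, which is indeed the idea behind that cited result.

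However, there is a genuine error in your unpacking. The claim ``since $\dim Z\le 1$, the group $CH_0(Z)$ is countably generated'' is false over $\C$: if $Z$ contains an irreducible curve $C$ of positive genus, then $CH_0(C)$ surjects onto $\mathrm{Jac}(C)(\C)$, a positive-dimensional complex torus, which is not countably generated as an abelian group. So a pure cardinality argument (uncountable modulo countable is uncountable) does not go through. What Mumford's theorem actually gives you is stronger than uncountability: it says $SA_0(X)$ is \emph{not finite-dimensional}, meaning that no correspondence from a finite-dimensional variety can surject onto it. In particular the pushforward image of $CH_0(Z)$ in $CH_0(X)$, being supported on a $1$-dimensional subscheme, is finite-dimensional in this sense and hence cannot exhaust $SA_0(X)$; therefore $CH_0(\Spec R)=CH_0(X)/\mathrm{im}\,CH_0(Z)$ is non-zero. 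If you replace your countability sentence with this finite-dimensionality argument (or simply cite the packaged result as the paper does), the proof goes through.
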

\begin{proof}
  By \cite[Corollary 1]{KS10}, $A_0(\Spec R)\ne 0$. Since $\Spec(R)$ is a regular affine surface, $A_0(\Spec R)=SK_0(R)$ (see \cite[Theorem 4.2 (d)]{MS76}) and so it does not satisfy \Cref{thm:characterisation} (5). Hence $R$ is not an OP ring and there exists a trace $0$ non-commutator in $\Mat_2(R)$.
\end{proof}

\begin{example}\label{ex:hypersurface}
  For any $d\ge 1$, let $R_d:=\C[x,y,z]/(x^d+y^d+z^d-1)$. Then $\Spec(R_d)$ is an open subvariety of $X_d:= \Proj(\C[x_0,x_1,x_2,x_3]/(x_0^d+x_1^d+x_2^d+x_3^d))$. 
  If $d=1,2,3$, $X_d$ is rational \cite[Example II.8.20.3]{hartshorne}, and so $A_0(X_d)=0$ \cite[Prop. 7.1]{bloch10}. Hence $SA_0(X_d)=0$ and by \Cref{cor:sa}, $R_d$ is an OP-ring and every trace $0$ matrix in $\Mat_2(R_d)$ is a commutator.

If $d\ge 4$, then we have $\Omega^2_{X_d/\C}=\cO_{X_d}(d-4)$ (see \cite[Example II.8.20.3]{hartshorne}), so $H^0(X_d, \Omega^2_{X_d/\C}) = H^0(X_d,\cO_{X_d}(d-4))\ne 0$. Hence by \Cref{cor:infchownotop}, there is a trace $0$ non-commutator in $\Mat_2(R_d)$. Note that if  the Bloch-Beilinson conjecture is true, then $\widebar{\Q}[x,y,z]/(x^d+y^d+z^d-1)$ is an OP-ring by \Cref{thm:bbopring} even if $d\ge 4$.
\end{example}
Recall the following conjecture of Bloch (see Conjecture 1.8 and Proposition 1.11 in \cite{bloch10} for details about the conjecture).
\begin{conj}[Bloch Conjecture]\label{conj:bloch}
  Let $X/\C$ be a regular projective surface. If $H^0(X,\Omega^2_{X/\C})=0$ then $SA_0(X)=0$.
\end{conj}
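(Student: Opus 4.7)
The statement labelled \texttt{conj:bloch} is Bloch's conjecture, which is a well known deep open problem in algebraic geometry, so a genuine proof is out of reach. What I would attempt instead is to outline the standard line of attack, which proceeds via the Bloch--Srinivas decomposition of the diagonal, together with a reduction to the hard case of surfaces of general type with $p_g=q=0$.

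First, I would dispose of the easy reductions. If $\dim X=2$ and $H^0(X,\Omega^2_{X/\C})=0$ then in particular $p_g(X)=0$. Since blow-ups preserve both sides (the Chow group of $0$-cycles and $p_g$), I may assume $X$ is minimal. Next, passing along the Albanese map $AJ_X\maps A_0(X){\Alb_{X/\C}(\C)}$ and using Roitman's torsion theorem, the conjecture reduces to showing that the transcendental part $SA_0(X)=\ker AJ_X$ vanishes. Now stratify by Kodaira dimension: for $\kappa(X)\le 1$ the conjecture is the theorem of Bloch--Kas--Lieberman, proved via an analysis of elliptic fibrations and by tracking how $CH_0$ behaves under the Albanese map of the base curve. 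So the core of the problem is a minimal surface of general type with $p_g=q=0$.

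For the hard case, the plan is to construct a self-correspondence $\Gamma\in CH_2(X\times X)_\Q$ with two contradictory properties: the action $\Gamma_\ast$ on the transcendental cohomology $H^2_{\mathrm{tr}}(X,\Q)$ is the identity (automatic when $H^2_{\mathrm{tr}}=0$, since $p_g=q=0$ forces transcendental cohomology to be torsion), while $\Gamma_\ast$ is zero on $SA_0(X)_\Q$. Given such $\Gamma$, a Bloch--Srinivas style diagonal decomposition
\[
N\cdot \Delta_X = \Gamma + Z_1 + Z_2\quad\text{in }CH_2(X\times X)_\Q,
\]
with $Z_1$ supported on $D\times X$ for a divisor $D$ and $Z_2$ supported on $X\times C$ for a curve $C$, forces $SA_0(X)_\Q=0$; Roitman then removes the torsion to conclude $SA_0(X)=0$.

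The main obstacle is precisely the construction of such a correspondence $\Gamma$ for a general surface of general type with $p_g=q=0$. In every known case (classical Godeaux, Catanese--Barlow, certain Burniat surfaces, surfaces with symplectic involutions handled by Voisin's method, etc.) one exploits very specific geometric features: a free group action producing the quotient and using transfer, a symplectic involution whose fixed locus is a curve, or an explicit degeneration. No uniform construction is known, and this is exactly where a general proof of the Bloch conjecture stalls, so my proposal ultimately collapses at this step into the same open problem the conjecture records.
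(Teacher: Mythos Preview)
The paper does not attempt to prove this statement: it is recorded there as a conjecture, with only a remark afterward that it is known for surfaces not of general type (Bloch--Kas--Lieberman) and for various specific surfaces of general type. Your proposal correctly recognises that a proof is out of reach and gives an accurate sketch of the standard Bloch--Srinivas/decomposition-of-the-diagonal strategy together with the reduction to minimal surfaces of general type with $p_g=q=0$; this is entirely consistent with what the paper says, and there is nothing further to compare.
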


Bloch's conjecture has been verified in many cases, including for any surfaces that are not of general type \cite[Proposition 4]{BKL76} and for some surfaces of general type, see e.g. \cite{bcgp12,IM79,Barlow85rational,Voisin14,Bauer14,BF15,PW16}. Thus in these cases, we can apply \Cref{cor:sa} to obtain examples of OP-rings. For example, the following is an OP-ring coming from a surface of general type called a Godeaux surface.
\begin{example}\label{ex:godeaux}
  Let $Y$ be the quintic complex surface in $\P^3_\C$ defined by the equation $x_0^5+x_1^5+x_2^5+x_3^5=0$. Let 
  \[\sigma(x_0:x_1:x_2:x_3) = (x_0: \zeta_5x_1:\zeta_5^2x_2:\zeta_5^3x_3)\]
  be an automorphism of $Y$ where $\zeta_5=e^{2\pi i/5}$. The quotient surface $X:=Y/\langle\sigma\rangle$ is called a Godeaux surface, with $A_0(X)=0$ (see \cite[Theorem 1]{IM79}).

So for an open affine subvariety $\Spec(R)$ of $X$, we have that $R$ is an OP-ring by \Cref{cor:sa}. For example consider
\[S:= \C[x,y,z]/(x^5+y^5+z^5+1),\]
where $x:=x_1/x_0$, $y:=x_2/x_0$ and $z:=x_3/x_0$, so that $\Spec(S)$ is an open subscheme of $Y$. Then $\sigma$ acts on $\Spec(S)$ as well by $\sigma(f(x,y,z))= f(\zeta_5 x, \zeta_5^2 y, \zeta_5^3 z)$, so $\Spec(S^{\langle \sigma\rangle}) \iso \Spec(S)/\langle\sigma\rangle$ is an open affine subvariety of $X$. Hence for
\[R:=\left(\C[x,y,z]/(x^5+y^5+z^5+1) \right)^{\langle\sigma\rangle}, \]
every trace $0$ matrix in $\Mat_2(R)$ is a commutator.
\end{example}

We conclude this section with the case when $\Spec(R)$ is an affine quadric hypersurface in $\A^3_k$. We do not assume that $k$ is algebraically closed, and use the theory of quadratics forms to determine whether $R$ is an OP ring.

Given $k$ a field with $\ch k\ne 2$ and a homogeneous degree $2$ polynomial $q(x,y,z)\in k[x,y,z]$, define 
\[R(k,q):=k[x,y,z]/(q-1).\]
Recall that $q$ can be written as $q=(x \; y\; z)Q(x\; y\; z)^t$ where $Q\in\Mat_3(k)$ is a symmetric matrix. The \emph{discriminant} of $q$ is $\Delta(q):=\det(Q)$ and $q$ is called \emph{non-degenerate} if $\Delta(q)\ne 0$. If $q$ is non-degenerate, then $R(k,q)$ is a regular $2$-dimensional algebra. Finally, recall that $q$ is \emph{isotropic over $k$} if there exist $x,y,z\in k$ not all $0$ such that $q(x,y,z)=0$.
\begin{theorem}\label{cor:quadratic1}
  If $q$ is isotropic or $\sqrt{-\Delta(q)}\in k$, then $R(k,q)$ is an OP-ring. Hence every trace $0$ matrix in $\Mat_2(R(k,q))$ is a commutator.
\end{theorem}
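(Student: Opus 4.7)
The plan is to apply \Cref{thm:characterisation} to $R := R(k,q)$. When $q$ is non-degenerate (the interesting case; the degenerate case reduces to a polynomial ring over a Dedekind-like curve), $R$ is a $2$-dimensional regular reduced finitely generated $k$-algebra, so by \Cref{cor:regularlocalop} it is locally OP. By \Cref{thm:characterisation} it then suffices to show that every finitely generated $R$-oriented module is free, or equivalently that $SK_0(R) = 0$ and every finitely generated stably-free projective $R$-module is free. I would verify this by a case analysis on the two hypotheses.

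First, suppose $q$ is isotropic. Witt cancellation lets me change coordinates so that $q \cong \mathbb{H}\perp\langle d\rangle$, i.e.\ $q = uv + dz^2$ for some $d \in k^\times$, whence $R \cong k[u,v,z]/(uv - (1 - dz^2))$. The distinguished opens $D(u)$ and $D(v)$ of $\Spec R$ are spectra of Laurent polynomial rings in two variables over $k$ (since $v$ and $u$, respectively, are determined by the defining equation), and their intersection $D(uv)$ is a Laurent polynomial ring over $k$. Combining Quillen--Suslin on each patch with a Mayer--Vietoris argument in $K$-theory gives $SK_0(R) = 0$ and that every stably-free projective $R$-module is free.

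Next, suppose $\sqrt{-\Delta(q)} \in k$. Diagonalise $q = a_1 x^2 + a_2 y^2 + a_3 z^2$ (valid since $\ch k \ne 2$) with $a_1 a_2 a_3 = \Delta(q)$, and set $s := \sqrt{-a_1 a_2 a_3} \in k^\times$. The aim is to trivialise the tangent module $T := \ker(\nabla q\colon R^3 \to R)$, which is stably free of rank $2$ with $T \oplus R \cong R^3$ via the splitting $w \mapsto \tfrac{1}{2} w \cdot (x,y,z)$. Using $s$ together with the adjugate of $\mathrm{diag}(a_1,a_2,a_3)$, I would write down explicit $R$-linear combinations of ``twisted cross-product'' expressions in the position vector $(x,y,z)$ and build two sections of $T$ whose completion by $(x,y,z)/2$ has unit determinant, thereby freeing $T$. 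Since any rank-$\ge 3$ stably-free module is free over a $2$-dimensional Noetherian ring by Bass cancellation, this handles all ranks and gives $SK_0(R) = 0$ as well.

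The main obstacle is the explicit trivialisation of the tangent bundle in the $\sqrt{-\Delta(q)}\in k$ case, particularly when $q$ is anisotropic so that no Witt decomposition is available. The geometric analogue is classical: the tangent bundle of the affine hyperboloid $\{x^2+y^2-z^2=1\}$ over $\mathbb{R}$ is trivial (here $-\Delta = 1$ is a square), whereas that of the sphere $\{x^2+y^2+z^2=1\}$ is not (here $-\Delta = -1$ is not). Turning this geometric observation into an algebraic trivialisation that uses the unit $\sqrt{-\Delta(q)} \in k$ in an essential way is the technical core of the argument.
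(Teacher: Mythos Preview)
The paper's proof is a two-line application of Swan's cancellation theorem for affine quadrics \cite[Theorem 16.1]{swan87}: under either hypothesis, every finitely generated projective $R(k,q)$-module $P$ of rank $n$ splits as $R^{n-1}\oplus Q$ with $Q$ of rank $1$; if $P$ is oriented then $Q\iso\bigwedge^n P\iso R$, so $P$ is free, and \Cref{thm:characterisation}(4) finishes. Your proposal instead tries to verify condition (5) of \Cref{thm:characterisation} by hand, which is a legitimate alternative strategy, but there is a genuine gap.

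In the $\sqrt{-\Delta(q)}\in k$ case you reduce everything to trivialising the tangent module $T$, and then assert that this ``handles all ranks and gives $SK_0(R)=0$ as well.'' Neither conclusion follows. Freeness of $T$ only shows that the \emph{particular} unimodular row $(a_1x,a_2y,a_3z)$ is completable; it says nothing about an arbitrary unimodular row of length $3$, so other rank-$2$ stably-free modules are not covered. And $SK_0(R)=0$ is a statement about \emph{oriented} projectives being stably free, which is logically independent of stably-free modules being free; trivialising one bundle does not compute $SK_0$. What you would need is that $[T]$ generates $\widetilde{K}_0(R)$ and that cancellation holds down to rank $1$ --- but that is precisely the content of Swan's theorem, so you are tacitly assuming the result the paper cites.

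The isotropic case has a milder version of the same issue: your Mayer--Vietoris argument can plausibly compute $K_0(R)$ (and hence $SK_0$), but ``every stably-free projective is free'' is not a $K_0$-statement --- it concerns the orbit set $\mathrm{Um}_3(R)/\GL_3(R)$ --- and patching free modules over $D(u)$ and $D(v)$ does not by itself complete unimodular rows globally. You would need an additional cancellation or transvection argument here. In short, your outline rediscovers the \emph{shape} of Swan's analysis but stops exactly where the hard input (cancellation for projective modules over affine quadrics) is required.
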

\begin{proof}
  Let $R:=R(k,q)$. Since $R$ is a regular dimension $2$ ring, it is a locally OP ring by \Cref{cor:regularlocalop}. Now suppose $P$ is an $R$-oriented projective module with $\bigwedge^n P =R$. Then by \cite[Theorem 16.1]{swan87}, $P=R^{n-1}\oplus Q$ with $\rk Q=1$. So
  \[R={\textstyle\bigwedge^n P = \bigwedge^n}(R^{n-1}\oplus Q) = R\tensor Q = Q.\]
  Hence $P$ is a free module, and by \Cref{thm:characterisation}, $R$ is an OP ring and every trace $0$ matrix in $\Mat_2(R)$ is a commutator.
\end{proof}

We also have a partial converse of \Cref{cor:quadratic1}.
\begin{theorem}\label{cor:quadratic2}
  If $q$ is anisotropic, $\sqrt{-\Delta(q)}\notin k$ and $q$ represents 1, then $R(k,q)$ is not an OP-ring and there is a trace $0$ matrix in $\Mat_2(R(k,q))$ that is not a commutator.
\end{theorem}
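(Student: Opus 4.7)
The plan is to apply \Cref{thm:characterisation}. Since $q$ is anisotropic, the kernel of its associated symmetric matrix $Q$ is trivial (any non-zero kernel vector would be isotropic), so $q$ is non-degenerate and $R := R(k,q)$ is a $2$-dimensional regular $k$-algebra. By \Cref{cor:regularlocalop} it is locally OP, so by the equivalence of (3) and (4) in \Cref{thm:characterisation} it suffices to exhibit a finitely generated $R$-oriented projective module which is not free.

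I would take the natural ``tangent-type'' module on $\Spec R$. Set $(u,v,w)^t := Q(x,y,z)^t \in R^3$, so that $xu + yv + zw = q = 1$ in $R$. The $R$-linear map $\phi\colon R^3 \to R$ sending $e_i$ to the $i$th entry of $(u,v,w)$ is then split surjective, with splitting $r \mapsto r\,(x,y,z)^t$. Hence $P := \ker \phi$ satisfies $R^3 \cong P \oplus R$ and
\[ {\textstyle\bigwedge^2} P \;\cong\; {\textstyle\bigwedge^3} R^3 \;\cong\; R, \]
so $P$ is a rank-$2$ stably-free, $R$-oriented projective module.

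The main obstacle is to show that $P$ is \emph{not} free under the hypotheses $q$ anisotropic, $\sqrt{-\Delta(q)}\notin k$, and $q$ represents $1$. This is precisely the converse of the freeness result invoked in the proof of \Cref{cor:quadratic1} via \cite[Theorem 16.1]{swan87}, and I plan to extract it from the same classification in \cite{swan87}: the rank-$2$ projective module $P$ above is free if and only if $q$ is isotropic or $-\Delta(q)$ is a square in $k$. Conceptually, a free basis $\{v_1,v_2\}$ of $P$ would extend $(x,y,z)^t$ to an $R$-basis of $R^3$, and $Q$-orthogonality would force an orthogonal decomposition of $q$ over $R$ as $\langle 1\rangle \perp q|_P$; specialising at any $k$-rational point (whose existence is ensured by $q$ representing $1$) would descend this decomposition to $k$ and force either isotropy of $q$ or $-\Delta(q)\in(k^\times)^2$, contradicting the hypotheses.

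Once non-freeness of $P$ is established, $R$ fails condition (4) of \Cref{thm:characterisation}, so $R$ is not an OP-ring, and by the equivalence of (1) and (3) in the same theorem there is a trace $0$ matrix in $\Mat_2(R)$ that is not a commutator, as required.
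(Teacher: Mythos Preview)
Your strategy is essentially the paper's: both reduce everything to Swan's K-theory of affine quadric surfaces. The paper's proof is more direct. It cites \cite[Theorem~9.2(b), Lemma~11.5]{swan87} to obtain $SK_0(R)=\Z/2\Z$ under the stated hypotheses, and then invokes condition~(5) of \Cref{thm:characterisation} rather than~(4). Your route---exhibiting the explicit tangent-type module $P$ and showing it is not free---amounts to the same thing, since $[P]-[R^2]$ is precisely the nonzero class in $SK_0(R)$. Note, however, that the non-freeness of $P$ is \emph{not} contained in \cite[Theorem~16.1]{swan87}: that is a cancellation/splitting statement used only in the positive direction in \Cref{cor:quadratic1}. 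For the negative direction you need the $K_0$ computation itself, i.e., exactly the results the paper cites.

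Your ``conceptual'' sketch of non-freeness does not work, and this is a genuine gap if you intended it as a substitute for the citation to Swan. If $P$ were free with basis $v_1,v_2$, specialising at a $k$-rational point $p$ indeed yields a decomposition $q\cong\langle 1\rangle\perp q'$ over $k$ with $q'$ the binary form having Gram matrix $(v_i(p)^tQv_j(p))$. But such a decomposition \emph{always} exists once $q$ represents $1$: just take the $Q$-orthogonal complement of any norm-$1$ vector in $k^3$. It forces nothing about isotropy of $q$ or about whether $-\Delta(q)$ is a square. The obstruction to freeness of $P$ is genuinely global---it lives in $SK_0(R)$---and cannot be detected by evaluation at a single $k$-point, just as the non-triviality of the tangent bundle to the real $2$-sphere is invisible fibrewise. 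So drop the specialisation sketch and cite Swan's $K_0$ computation directly; at that point your argument and the paper's coincide.
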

\begin{proof}
  Let $R:=R(k,q)$. Since 
  \[SK_0(R) = \ker(\tilde{K}_0(R)\to \Pic(R)),\] 
  $SK_0(R)=\Z/2\Z$ by \cite[Theorem 9.2 (b), Lemma 11.5]{swan87}. Hence by \Cref{thm:characterisation}, $R$ is not an OP ring and there is a trace $0$ matrix in $\Mat_2(R)$ that is not a commutator.
\end{proof}

\begin{remark}
  Over $R:=\R[x,y,z]/(x^2+y^2+z^2-1)$, there is a well-known example of a $2\times 2$ non-commutator $A:=\begin{pmatrix}
    x & y \\ z & -x
  \end{pmatrix}\in \Mat_2(R)$ (see \cite[Section 3]{RR00} for the proof). By taking $q:=x^2+y^2+z^2$ as the quadratic form, we see that $R=R(\R, q)$. Since $\sqrt{-\Delta(q)}=\sqrt{-1}\notin \R$, \Cref{cor:quadratic2} implies that there is a non-commutator in $\Mat_2(R)$. However we cannot conclude from \Cref{cor:quadratic2} that this particular matrix $A$ is a non-commutator. 

Note that for $R\tensor_\R\C = R(\C, q)= \C[x,y,z]/(x^2+y^2+z^2-1)$, \Cref{cor:quadratic1} applies since $\sqrt{-\Delta(q)}=\sqrt{-1}\in \C$, so $A$ is a commutator in $\Mat_2(R(\C,q))$. For example, we can write $A$ as
  \[
    A = \left[ \begin{pmatrix}
      1+ix(ix-y) & -xz \\
      x(ix-y) & 0 
    \end{pmatrix},
    \begin{pmatrix}
      -iz & ix+y\\
      -z  & 0
    \end{pmatrix} \right].
\]
\end{remark}

\section*{Acknowledgements}
The author would like to thank his advisor, Dino Lorenzini, for posing the problem and providing detailed feedback on the manuscript. This work was completed as a part of the author's doctoral dissertation at University of Georgia.

\printbibliography
\end{document}